\crefname{thm}{Theorem}{Theorems}
\crefname{pro}{Proposition}{Propositions}
\crefname{lem}{Lemma}{Lemmas}
\crefname{rmk}{Remark}{Remarks}
\crefname{cor}{Corollary}{Corollaries}
\crefname{dfn}{Definition}{Definitions}
\crefname{ex}{Example}{Examples}
\crefname{section}{Section}{Sections}
\crefname{subsection}{Subsection}{Subsections}
\newcommand{\To}{\rightarrow}
\newcommand{\as}{{\rm d}\mathbb{P}\times{\rm d} t-a.e.}
\newcommand{\ps}{\mathbb{P}-a.s.}
\newcommand{\F}{\mathcal{F}}
\newcommand{\E}{\mathbb{E}}
\newcommand{\s}{\mathcal{S}}
\newcommand{\M}{{\rm M}}
\newcommand{\hcal}{\mathcal{H}}
\newcommand{\vcal}{\mathcal{V}}
\newcommand{\mcal}{\mathcal{M}}
\newcommand{\T}{[0,T]}
\newcommand{\Lp}{{\mathbb L}^p(\F_T)}
\newcommand{\R}{{\mathbb R}}
\newcommand{\RE}{\forall}
\newcommand {\Lim}{\lim\limits_{n\rightarrow\infty}}
\newcommand {\Dis}{\displaystyle}
\newtheorem{thm}{Theorem}[section]
\newtheorem{lem}[thm]{Lemma}
\newtheorem{pro}[thm]{Proposition}
\newtheorem{rmk}[thm]{Remark}
\newtheorem{cor}[thm]{Corollary}
\newtheorem{dfn}[thm]{Definition}
\newtheorem{ex}[thm]{Example}
\begin{document}
\begin{frontmatter}

\title{{\boldmath\bf
$L^p$ solutions of doubly reflected BSDEs \\ under general assumptions}\tnoteref{found}}
\tnotetext[found]{Supported by the Fundamental
Research Funds for the Central Universities (No.\,2017XKZD11).\vspace{0.1cm}}


\author{Shengjun FAN\corref{cor1}\vspace{0.1cm}}%
\author{\ Qianyun QIAN}

\cortext[cor1]{\ Corresponding author. E-mail: f\_s\_j@126.com.}

\address{School of Mathematics, China University of Mining and Technology, Xuzhou, Jiangsu, 221116, PR China.\vspace{-0.5cm}}

\begin{abstract}
Under a generalized Mokobodzki condition for reflected BSDEs with two continuous barriers which relates the growth of the generator $g$ and that of the barriers, we establish several existence and uniqueness results on $L^p\ (p>1)$ solutions of doubly reflected BSDEs with generators satisfying a one-sided Osgood condition together with a general growth in the state variable $y$, and a uniform continuity condition or a linear growth condition in the state variable $z$. This Mokobodzki condition is also proved to be necessary for existence of the $L^p$ solutions. And, we prove that the $L^p$ solutions can be approximated by the penalization method and by some sequences of the $L^p$ solutions of doubly reflected BSDEs. \vspace{0.2cm}
\end{abstract}

\begin{keyword}
Doubly reflected backward stochastic differential equation \sep Existence and uniqueness\sep \\
\hspace{1.8cm} Comparison theorem\sep Penalization method\sep
$L^p$ solution\vspace{0.2cm}

\MSC[2010] 60H10, 60H30
\end{keyword}

\end{frontmatter}

\section{Introduction}
\label{sec:1-Introduction}
\setcounter{equation}{0}

Backward stochastic differential equations (BSDEs for short) were first introduced in linear case by \citet{Bismut1973JMAA} in 1973, and extended to a fully nonlinear version at the first time by \citet{PardouxPeng1990SCL} in 1990. Later on, as a variation of the notion of nonlinear BSDEs, nonlinear reflected BSDEs (RBSDEs for short) with one and two continuous barriers were introduced by \citet{ElKarouiKapoudjianPardouxPengQuenez1997AoP} and \citet{CvitanicKaratzas1996AoP} respectively. At present it has been widely recognized that these equations have natural connections with many problems in different mathematical fields, such as partial differential equations, mathematical finance, stochastic control and game theory, optimal switching problem and other optimality problems and others (see, e.g. \cite{BayraktarYao2012SPA,BayraktarYao2015SPA,ElKarouiKapoudjianPardouxPengQuenez1997AoP,
ElKarouiPardouxQuenez1997NMIF,ElKarouiPengQuenez1997MF,Hamadene2002Stochastics,
HamadeneLepeltier2000SPA,HamadeneLepeltierWu1999PMS,
HamadeneZhang2010SPA,HuTang2010PTRF,Jia2010SPA,MaZhang2005SPA,
Pardoux1999NADEC,Peng1997BSDEP,Peng1999PTRF,Peng2004LNM,
PengXu2005AIHPPS,PengXu2010Bernoulli,RosazzaEmanuela2006IME}, etc.), and they provide a very useful and efficient tool for studying these problems.\vspace{0.2cm}

In \citet{PardouxPeng1990SCL}, \citet{ElKarouiKapoudjianPardouxPengQuenez1997AoP} and \citet{CvitanicKaratzas1996AoP}, the existence and uniqueness result of $L^2$ solutions of non-reflected BSDEs and RBSDEs with one and two continuous barriers are established under the standard assumption that the generator $g$ satisfies the linear growth condition and is Lipschitz continuous with respect to the state variables $y$ and $z$. Many attempts have been made to relax these assumptions, which are too strong for many interesting applications mentioned above. For example, some works were devoted to solving RBSDEs with less regular barriers, see \cite{BayraktarYao2015SPA,Hamadene2002Stochastics,HamadeneHassani2005PTRF,
HamadeneHassaniOuknine2010BSM,Klimsiak2013BSM,LepeltierXu2005SPL,PengXu2005AIHPPS}, etc; many scholars were interested in the existence and uniqueness of solutions for non-reflected BSDEs and RBSDEs with data that are in the spaces of $L^p\ (p>1)$ and $L^1$, see \cite{BriandDelyonHu2003SPA,BriandLepetierSanMrtin2007Bernoulli,
ElAsriHamadeneWang2011SAA,ElKarouiPengQuenez1997MF,Fan2015JMAA,Izumi2013SPL,
MaFanSong2013BSM} for non-reflected BSDEs,
and \cite{Aman2009ROSE,BayraktarYao2012SPA,HamadenePopier2012SD,Klimsiak2012EJP,
Klimsiak2013BSM,RozkoszSlominski2012SPA} for RBSDEs; and more papers focused their attention on weakening the linear growth condition and Lipschitz-continuity condition of the generator $g$ with respect to the state variables $y$ and $z$, see \cite{BriandCarmona2000JAMSA,BriandDelyonHu2003SPA,
BriandLepetierSanMrtin2007Bernoulli,ElKarouiPengQuenez1997MF,Fan2015JMAA,
FanJiang2010CRA,FanJiang2012JoTP,FanJiang2012SPL,FanJiangDavison2013FMC,
HuTang2016SPA,Izumi2013SPL,Jia2008CRA,Jia2010SPA,MaFanSong2013BSM,Mao1995SPA,
Pardoux1999NADEC} for non-reflected BSDEs,
and \cite{Aman2009ROSE,BayraktarYao2012SPA,BayraktarYao2015SPA,EssakyHassani2011BSM,
EssakyHassani2013JDE,Fan2017AMS,HamadeneHdhiri2006JAMSA,
HamadeneLepeltierMatoussi1997BSDEs,HamadeneZhang2010SPA,HuaJiangShi2013JKSS,
JiaXu2014arXiv,Klimsiak2012EJP,Klimsiak2013BSM,KobylanskiLepeltierQuenezTorres2002PMS,
LepeltierMatoussiXu2005AdvinAP,LepeltierSanMartin2004JAP,Matoussi1997SPL,
RozkoszSlominski2012SPA,Xu2008SPA,ZhengZhou2008SPL} for RBSDEs.\vspace{0.2cm}

The present paper is the continuation along the last two research directions, which is devoted to solving RBSDEs with two continuous barriers in the space of $L^p\ (p>1)$ under general assumptions. The generator $g$ of doubly reflected BSDEs only needs to satisfy a one-sided Osgood condition together with a general growth in the state variable $y$, and a uniform continuity condition or a linear growth condition in the state variable $z$ (see assumptions \ref{A:(H1)}, \ref{A:(H3)}, \ref{A:(H2)} and \ref{A:(H2')} in \cref{sec:2-NotationsAssumptions} respectively), which are weaker than those in many existing works. A generalized Mokobodzki condition (see assumption \ref{A:(H4)} in \cref{sec:2-NotationsAssumptions}) for doubly reflected BSDEs which relates the growth of the generator $g$ and that of the barriers is put forward and proved to be necessary and sufficient for existence of the $L^p$ solutions. Based on a combination between existing methods, their refinement and perfection, and some new ideas, we prove that the $L^p$ solutions of the doubly reflected BSDEs can be approximated by the penalization method and by some sequences of the $L^p$ solutions of doubly reflected BSDEs. In particular, we also consider the case that the generator $g$ may be discontinuous and have a general growth in the state variable  $y$.\vspace{0.2cm}

The rest of this paper is organized as follows. \cref{sec:2-NotationsAssumptions} contains some notations, definitions, assumptions together with some lemmas and propositions which will be frequently used later. In \cref{sec:3-NecessityUniqueness}, by \cref{thm:3-Necessity} we verify necessity of the generalized Mokobodzki condition to ensure existence of the $L^p$ solutions. Then, we prove a general comparison theorem of the $L^p$ solutions for RBSDEs with two continuous barriers, which naturally yields uniqueness of the $L^p$ solutions under assumptions \ref{A:(H1)} and \ref{A:(H2)} (ii), see \cref{pro:3-ComparisonTheoremofDRBSDE} and \cref{thm:3-UniquenessOfSolutions}. In \cref{sec:4-ExistencePenalization}, by \cref{pro:4-Penalization} we show the convergence of $L^p$ solutions of the penalization equations for RBSDEs with one continuous barrier under some elementary conditions. And, by \cref{pro:4-EstimateOfPenalizationEq} we establish an a priori uniform estimate on the $L^p$ solutions of the penalization equations for RBSDEs with one continuous barrier and non-reflected BSDEs under assumptions \ref{A:(H1)}-\ref{A:(H4)}. Then, based on Propositions \ref{pro:3-ComparisonTheoremofDRBSDE},  \ref{pro:4-Penalization} and \ref{pro:4-EstimateOfPenalizationEq} together with \cref{rmk:4-RemarkOfKeyEstimate}, by \cref{thm:4-ExistenceUnder(H2)} (resp. \cref{thm:4-ExistenceUnder(H2')}) we prove existence and uniqueness of the $L^p$ solutions (resp. existence of the maximal and minimal $L^p$ solutions) for the doubly reflected BSDEs under assumptions \ref{A:(H1)}, \ref{A:(H2)} (resp. \ref{A:(H2')}), \ref{A:(H3)} and \ref{A:(H4)}. In \cref{sec:5-Stability}, by \cref{pro:5-Approximation} we establish a general approximation result for the $L^p$ solutions of the doubly reflected BSDEs under some elementary assumptions, and based on it we prove an existence result of the minimal (maximal) $L^p$ solutions for the doubly reflected BSDEs under several weaker assumptions, see \cref{thm:5-ExistenceOfDRBSDEunder(H2')(A1a)}, where the generator $g$ has a general growth in $y$ and a linear growth in $z$, but it is interesting that $g$ may be discontinuous in $y$ as considered in \citet{FanJiang2012SPL} and \citet{ZhengZhou2008SPL}, see assumptions \ref{A:(A1a)} and \ref{A:(A1b)} in \cref{sec:5-Stability}. Finally, at the end of Sections \ref{sec:4-ExistencePenalization} and \ref{sec:5-Stability}, we
provide several examples which the results of this paper can (but any existing works can not) be applied to, and give some remarks to illustrate further our theoretical results, see Remarks \ref{rmk:4-1} and \ref{rmk:5-1} for more details. And, some details for the proof of Propositions \ref{pro:4-Penalization} and \ref{pro:5-Approximation} are provided in Appendix.\vspace{0.2cm}

At the end of the introduction, we would like to mention that the results obtained in this paper improve considerably some known works. In particular, these results extend the corresponding results in \citet{Fan2017AMS} for RBSDEs with one continuous barrier to the case of RBSDEs with two continuous barriers, and strengthen the corresponding results in \citet{Klimsiak2013BSM}, where the generator $g$ needs to satisfy some stronger assumptions (see assumptions \ref{A:(H1s)} and \ref{A:(H2s)} in \cref{sec:2-NotationsAssumptions}) than ours.

\section{Preliminaries}
\label{sec:2-NotationsAssumptions}
\setcounter{equation}{0}

\subsection{Notations\vspace{0.1cm}}

In the whole paper we fix a real number $T>0$ and a positive integer $d$, and let $(B_t)_{t\in\T}$ be a
standard $d$-dimensional Brownian motion defined on some complete filtered probability space $(\Omega,\F,\mathbb{P})$, where $(\F_t)_{t\in\T}$ is the completed $\sigma$-algebra filtration generated by $(B_t)_{t\in\T}$ and $\F=\F_T$. We assume that if there is not a special illustration, all processes of this paper are defined on $\Omega\times\T$, all notions whose definitions are related to some filtration are understood with respect to the filtration $(\F_t)_{t\in\T}$, and all equalities and inequalities between random elements are understood to hold $\ps$ To avoid ambiguity we stress that writing $X_t=Y_t,\ t\in\T$ we mean that
$$\mathbb{P}(\{\omega:X_t(\omega)=Y_t(\omega),\ t\in\T\})=1,$$
while writing $X_t=Y_t$ for each $t\in\T$
we mean that
$$\RE\ t\in\T,\ \ \mathbb{P}(\{\omega:X_t(\omega)=Y_t(\omega)\})=1.$$
It is clear that they are equivalent if $(X_t)_{t\in\T}$ and $(Y_t)_{t\in\T}$ are both continuous processes. In what follows, the variable $\omega$ in random elements is often omitted as usually done.\vspace{0.2cm}

Denote $\R_+:=[0,+\infty)$, $a^+:=\max \{a,0\}$ and $a^-:=(-a)^+$ for any real number $a$. For a set $A$, we denote by $A^c$ the complement of $A$ and by $\mathbbm{1}_{A}$ the indicator function of $A$. Let ${\rm sgn}(x)$ represent the sign of a real number $x$. For $n\geq 1$, the Euclidean norm of an element $y\in \R^{n}$ will be denoted by $|y|$.\vspace{0.2cm}

For $p>0$, we define the following spaces:
\begin{itemize}
\item $\Lp$ the set of all $\F_T$-measurable real-valued random variables $\xi$ satisfying
$$\|\xi\|_{\mathbb{L}^p}:=\left(\E[|\xi|^p]\right)^{1\wedge 1/p}<+\infty;\vspace{-0.2cm}$$
\item $\hcal$  the set of all progressively measurable real-valued processes $X_\cdot$ satisfying
    $$\mathbb{P}\left(\int_0^T|X_t|{\rm d}t<+\infty\right)=1;\vspace{-0.3cm}$$
\item $\hcal^p$ the set of all processes $X_\cdot\in \hcal$ satisfying
    $
    \|X\|_{\hcal^p}:=\left\{ \E\left[\left(\int_0^T |X_t|{\rm d}t\right)^p\right] \right\}^{1\wedge 1/p}<+\infty;
    $
\item $\s$  the set of all progressively measurable and continuous real-valued processes;
\item $\s^p$ the set of all processes $Y_\cdot\in \s$ satisfying
     $\|Y\|_{{\s}^p}:=\left(\E[\sup_{t\in\T} |Y_t|^p]\right)^{1\wedge 1/p}<+\infty;$
\item $\M$ the set of all progressively measurable $\R^d$-valued processes $Z_\cdot$ satisfying
    $$\mathbb{P}\left(\int_0^T|Z_t|^2{\rm d}t<+\infty\right)=1;\vspace{-0.3cm}$$
\item $\M^p$ the set of all processes $Z_\cdot\in \M$ satisfying
    $
    \|Z\|_{\M^p}:=\left\{ \E\left[\left(\int_0^T |Z_t|^2{\rm d}t\right)^{p/2}\right] \right\}^{1\wedge 1/p}<+\infty;
    $
\item $\mcal$ the set of all continuous real-valued local martingales;
\item $\mcal^p$ the set of all martingales $M_\cdot\in \mcal$ satisfying $\E\left[\left(\langle M\rangle_T\right)^{p/2}\right]<+\infty$;
\item $\vcal$ the set of all progressively measurable and continuous real-valued processes of finite variation;
\item $\vcal^+$ the set of all increasing processes $V_\cdot\in \vcal$ valued $0$ at $0$;
\item $\vcal^p$ the set of all processes $V_\cdot\in \vcal$ satisfying $\E\left[|V|^p_T\right]<+\infty$;
\item $\vcal^{+,p}$ the set of all processes $V_\cdot\in \vcal^+$ satisfying $\E\left[|V|^p_T\right]<+\infty$.
\end{itemize}
Here and hereafter, for each $V_\cdot\in\vcal$ and $t\in\T$, $|V|_{t,T}$ denotes the random finite variation of $V_\cdot$ on the interval $[t,T]$, and $|V|_{0,T}$ is denoted simply by $|V|_T$. Clearly, if $V_\cdot\in\vcal^{+}$, then $|V|_{t,T}=V_T-V_t$.

\subsection{Definitions\vspace{0.1cm}}

We now recall a definition used in \citet{EssakyHassani2013JDE}.

\begin{dfn}\label{dfn:2-Singular}
For any two processes $K^1_\cdot$ and $K^2_\cdot$ in $\vcal^1$, we say that
\begin{itemize}
\item ${\rm d}K^1\bot {\rm d}K^2$ if and only if there exists a progressively measurable set $D\subset \Omega\times\T$ such that
    $$\E\left[\int_0^T \mathbbm{1}_{D}(t,\omega)\ {\rm d}K^1_t(\omega)\right]=\E\left[\int_0^T \mathbbm{1}_{D^c}(t,\omega)\ {\rm d}K^2_t(\omega)\right]=0.$$
\item ${\rm d}K^1\leq {\rm d}K^2$ if and only if for each progressively measurable set $D\subset \Omega\times\T$,
    $$\E\left[\int_0^T \mathbbm{1}_{D}(t,\omega)\ {\rm d}K^1_t(\omega)\right]\leq \E\left[\int_0^T \mathbbm{1}_{D}(t,\omega)\ {\rm d}K^2_t(\omega)\right],\ \ i.e.,\  K^1_t-K^1_s\leq K^2_t-K^2_s,\ 0\leq s\leq t\leq T.$$
\end{itemize}
\end{dfn}

In the rest of this paper, we always assume that $\xi$ is an $\F_T$-measurable random variable, $V_\cdot\in\vcal$, $L_\cdot\in \s$ (or $L_\cdot\equiv -\infty$) and $U_\cdot\in \s$ (or $U_\cdot\equiv +\infty$) with $L_\cdot\leq U_\cdot$, and that a random function
$$
g(\omega,t,y,z):\Omega\times\T\times\R\times\R^d
\longmapsto \R
$$
is progressively measurable for each $(y,z)$, which is usually called a generator.

\begin{dfn}
\label{dfn:2-DefinitionOfBSDERBSDEDRBSDE}
By a solution to BSDE $(\xi,g+{\rm d}V)$ we understand a pair of progressively measurable processes $(Y_t,Z_t)_{t\in\T}$ belonging to $\s\times \M$ such that $\ps$,
\begin{equation}\label{eq:2-BSDE}
Y_t=\xi+\int_t^Tg(s,Y_s,Z_s){\rm d}s+\int_t^T{\rm d}V_s-\int_t^TZ_s\cdot {\rm d}B_s,\ \ t\in\T.
\end{equation}
By a solution to \underline{R}BSDE $(\xi,g+{\rm d}V,L)$ we understand a triple of progressively measurable processes $(Y_t,Z_t,K_t)_{t\in\T}$ belonging to $\s\times \M\times \vcal^{+,1}$ such that $\ps$,
\begin{equation}\label{eq:2-RBSDEwithLowerBarrier}
\left\{
\begin{array}{l}
\Dis Y_t=\xi+\int_t^Tg(s,Y_s,Z_s){\rm d}s+\int_t^T{\rm d}V_s+\int_t^T{\rm d}K_s-\int_t^TZ_s\cdot {\rm d}B_s,\ \   t\in\T,\vspace{0.2cm}\\
\Dis L_t\leq Y_t,\ \ t\in\T\ \ {\rm and}\ \ \int_0^T (Y_t-L_t){\rm d}K_t=0.
\end{array}
\right.
\end{equation}
By a solution to $\bar{R}$BSDE $(\xi,g+{\rm d}V,U)$ we understand a triple of progressively measurable processes $(Y_t,Z_t,A_t)_{t\in\T}$ belonging to $\s\times \M\times \vcal^{+,1}$ such that $\ps$,
\begin{equation}\label{eq:2-RBSDEwithSuperBarrier}
\left\{
\begin{array}{l}
\Dis Y_t=\xi+\int_t^Tg(s,Y_s,Z_s){\rm d}s+\int_t^T{\rm d}V_s-\int_t^T{\rm d}A_s-\int_t^TZ_s\cdot {\rm d}B_s,\ \ t\in\T,\vspace{0.2cm}\\
\Dis Y_t\leq U_t,\ \ t\in\T\ \ {\rm and}\ \ \int_0^T (U_t-Y_t){\rm d}A_t=0.
\end{array}
\right.
\end{equation}
By a solution to DRBSDE $(\xi,g+{\rm d}V,L,U)$ we understand a quadruple of progressively measurable processes $(Y_t,Z_t,K_t,A_t)_{t\in\T}$ belonging to $\s\times \M\times \vcal^{+,1}\times \vcal^{+,1}$ such that $\ps$,
\begin{equation}\label{eq:2-DRBSDE}
\left\{
\begin{array}{l}
\Dis Y_t=\xi+\int_t^Tg(s,Y_s,Z_s){\rm d}s+\int_t^T{\rm d}V_s+\int_t^T{\rm d}K_s-\int_t^T{\rm d}A_s-\int_t^TZ_s\cdot {\rm d}B_s,\ \   t\in\T,\vspace{0.2cm}\\
\Dis L_t\leq Y_t\leq U_t,\ \ t\in\T,\ \int_0^T (Y_t-L_t){\rm d}K_t=\int_0^T (U_t-Y_t){\rm d}A_t=0\ \ {\rm and}\ \  {\rm d}K\bot {\rm d}A.
\end{array}
\right.\vspace{0.3cm}
\end{equation}
\end{dfn}

\begin{rmk}
\label{rmk:2-ConnectionOfBSDEandRBSDEandDRBSDE}
It is not hard to verify the following assertions.
\begin{itemize}
\item [(i)] The claim that $(Y_t,Z_t,K_t)_{t\in\T}$ is a solution of \underline{R}BSDE $(\xi,g+{\rm d}V,L)$  is equivalent to the claim that
$(Y_t,Z_t,K_t,0)_{t\in\T}$ is a solution of DRBSDE $(\xi,g+{\rm d}V,L,+\infty)$.

\item [(ii)] The claim that $(Y_t,Z_t,A_t)_{t\in\T}$ is a solution of $\bar{R}$BSDE $(\xi,g+{\rm d}V,U)$ is equivalent to the claim that
$(Y_t,Z_t,0,A_t)_{t\in\T}$ is a solution of DRBSDE $(\xi,g+{\rm d}V,-\infty,U)$.

\item [(iii)] The claim that $(Y_t,Z_t)_{t\in\T}$ is a solution of BSDE $(\xi,g+{\rm d}V)$  is equivalent to anyone of the following three claims:
\begin{itemize}
\item $(Y_t,Z_t,0)_{t\in\T}$ is a solution of \underline{R}BSDE $(\xi,g+{\rm d}V,-\infty)$;
\item $(Y_t,Z_t,0)_{t\in\T}$ is a solution of $\bar{R}$BSDE $(\xi,g+{\rm d}V,+\infty)$;
\item $(Y_t,Z_t,0,0)_{t\in\T}$ is a solution of DRBSDE $(\xi,g+{\rm d}V,-\infty,+\infty)$.
\end{itemize}

\item [(iv)] The claim that $(Y_t,Z_t,K_t)_{t\in\T}$ is a solution of \underline{R}BSDE $(\xi,g+{\rm d}V,L)$ is equivalent to the claim that
$(\bar Y_t,\bar Z_t,\bar A_t)_{t\in\T}:=(-Y_t,-Z_t,K_t)_{t\in\T}$ is a solution of $\bar{R}$BSDE $(\bar\xi,\bar g+{\rm d}\bar V,\bar U)$, where
$$\bar\xi:=-\xi,\ \bar V_\cdot:=-V_\cdot,\ \bar U_\cdot:=-L_\cdot,\ \bar g(\omega,t,y,z):=-g(\omega,t,-y,-z).$$
\end{itemize}
\end{rmk}

\begin{dfn}
\label{dfn:2-MaximalandMinimalSolution}
A solution $(Y_t,Z_t)_{t\in\T}$ of BSDE $(\xi,g+{\rm d}V)$ is called the minimal (resp. maximal) one in some space if $(Y_\cdot,Z_\cdot)$ belongs to this space, and for any solution $(Y'_t,Z'_t)_{t\in\T}$ of BSDE $(\xi,g+{\rm d}V)$ in this space, we have
\begin{equation}\label{eqn:2-ComparisonOfY}
Y_t\leq Y'_t,\ \ t\in\T\ \ \ ({\rm resp.}\ \  Y_t\geq Y'_t,\ \ t\in\T).
\end{equation}
Similarly, we can define that
\begin{itemize}
\item A solution $(Y_t,Z_t,K_t)_{t\in\T}$ of \underline{R}BSDE $(\xi,g+{\rm d}V,L)$ is called the minimal (resp. maximal)  one in some space if  $(Y_\cdot,Z_\cdot,K_\cdot)$ belongs to this space, and \eqref{eqn:2-ComparisonOfY} holds for any solution $(Y'_t,Z'_t,K'_t)_{t\in\T}$ of \underline{R}BSDE $(\xi,g+{\rm d}V,L)$ in this space.
\item A solution $(Y_t,Z_t,A_t)_{t\in\T}$ of $\bar{R}$BSDE $(\xi,g+{\rm d}V,U)$ is called the minimal (resp. maximal) one in some space if  $(Y_\cdot,Z_\cdot,A_\cdot)$ belongs to this space, and \eqref{eqn:2-ComparisonOfY} holds for any solution $(Y'_t,Z'_t,A'_t)_{t\in\T}$ of $\bar{R}$BSDE $(\xi,g+{\rm d}V,U)$ in this space.
\item A solution $(Y_t,Z_t,K_t,A_t)_{t\in\T}$ of DRBSDE $(\xi,g+{\rm d}V,L,U)$ is called the minimal (resp. maximal)  one in some space if  $(Y_\cdot,Z_\cdot,K_\cdot,A_\cdot)$ belongs to this space, and \eqref{eqn:2-ComparisonOfY} holds for any solution $(Y'_t,Z'_t,K'_t,A'_t)_{t\in\T}$ of DRBSDE $(\xi,g+{\rm d}V,L,U)$ in this space.\vspace{0.2cm}
\end{itemize}
\end{dfn}

\subsection{Assumptions\vspace{0.1cm}}

In this paper, we will mainly use the following assumptions on the generator, the terminal condition and the barriers, where $p>1$.
\begin{enumerate}

\renewcommand{\theenumi}{(H\arabic{enumi})}
\renewcommand{\labelenumi}{\theenumi}

\item \label{A:(H1)}
$g$ satisfies the one-sided Osgood condition in $y$, i.e., there exists a nondecreasing concave function $\rho(\cdot):\R_+\mapsto \R_+$ with $\rho(0)=0$, $\rho(u)>0$ for $u>0$ and $\int_{0^+} {{\rm d}u\over \rho(u)}=+\infty$ such that $\as$, $\RE\ y_1,y_2\in \R,z\in\R^{d}$,
$$
(g(\omega,t,y_1,z)-g(\omega,t,y_2,z)){\rm sgn}(y_1-y_2)\leq \rho(|y_1-y_2|).
$$
\item \label{A:(H2)}
\begin{itemize}
\item [(i)] $g$ is continuous in $y$, i.e, $\as$, $\RE\ z\in {\R^{d}},\ \ g(\omega,t,\cdot,z)$ is continuous;
\item [(ii)] $g$ is uniformly continuous in $z$, i.e., there exists a nondecreasing  continuous function $\phi(\cdot):\R_+\mapsto \R_+$ with $\phi(0)=0$ such that $\as$, $\RE\ y\in\R, z_1,z_2\in\R^{d}$,
$$
|g(\omega,t,y,z_1)-g(\omega,t,y,z_2)|\leq \phi(|z_1-z_2|).
$$
\end{itemize}

\renewcommand{\theenumi}{(H2')}
\renewcommand{\labelenumi}{\theenumi}

\item \label{A:(H2')}
\begin{itemize}
\item [(i)] $g$ is stronger continuous in $(y,z)$, i.e., $\as$, $\RE\ y\in \R,\ g(\omega,t,y,\cdot)$ is continuous, and $g(\omega,t,\cdot,z)$ is continuous uniformly with respect to $z$;
\item [(ii)] $g$ has a stronger linear growth in $z$, i.e., there exist two constants $\mu, \lambda\geq 0$ and a nonnegative process $f_\cdot\in \hcal^p$ such that $\as$, $\RE\ y\in\R, z\in\R^{d}$,
$$
|g(\omega,t,y,z)-g(\omega,t,y,0)|\leq f_t(\omega)+\mu |y|+\lambda |z|.
$$
\end{itemize}

\renewcommand{\theenumi}{(H\arabic{enumi})}
\renewcommand{\labelenumi}{\theenumi}
\setcounter{enumi}{2}

\item \label{A:(H3)}
\begin{itemize}
\item [(i)] $g$ has a general growth in $y$, i.e, $\RE r>0, \ \psi_\cdot(r):=\sup\limits_{|y|\leq r}|g(\cdot,y,0)-g(\cdot,0,0)|\ \in\hcal$;
\item [(ii)] $g(\cdot,0,0)\in\hcal^p$.
\end{itemize}

\item \label{A:(H4)}
\begin{itemize}
\item [(i)]$L_\cdot\in \s\ ({\rm or}\  L_\cdot\equiv-\infty)$, $U_\cdot\in \s\ ({\rm or}\ U_\cdot\equiv +\infty)$, $\xi\in \Lp$ and $L_T\leq \xi\leq U_T$;

\item [(ii)] There exists an $X_\cdot\in \mcal^p+\vcal^p$ such that $g(\cdot,X_\cdot,0)\in \hcal^p$ and $L_t\leq X_t\leq U_t$ for each $t\in \T$.\vspace{0.2cm}
\end{itemize}
\end{enumerate}

\begin{rmk}
\label{rmk:2-LinearGrowthOfRhoandPhi}
Without loss of generality, we will always assume that the functions $\rho(\cdot)$ and $\phi(\cdot)$ defined respectively in (H1) and (H2) are of linear growth, i.e., there exists a constant $A>0$ such that
$$\RE\ x\in \R_+,\ \ \rho(x)\leq A(x+1)\ \ {\rm and}\ \ \phi(x)\leq A(x+1).$$
And, we note that assumption \ref{A:(H4)} is usually called the generalized Mokobodzki condition for doubly reflected BSDEs, which relate the growth of $g$ and that of $L_\cdot$ and $U_\cdot$\vspace{0.1cm}
\end{rmk}

In order to illustrate our results more clearly, the following several assumptions will also be used.\vspace{-0.1cm}

\begin{enumerate}

\renewcommand{\theenumi}{(H1s)}
\renewcommand{\labelenumi}{\theenumi}

\item \label{A:(H1s)} $g$ satisfies the monotonicity condition in $y$, i.e., there exists a constant $\mu\in \R$ such that $\as$, $\RE\ y_1,y_2\in \R,z\in\R^{d}$,
$$
(g(\omega,t,y_1,z)-g(\omega,t,y_2,z)){\rm sgn}(y_1-y_2)\leq \mu |y_1-y_2|.
$$

\renewcommand{\theenumi}{(H2s)}
\renewcommand{\labelenumi}{\theenumi}

\item \label{A:(H2s)}
\begin{itemize}
\item [(i)] $g$ is continuous in $y$, i.e, $\as$, $\RE\ z\in {\R^{d}},\ \ g(\omega,t,\cdot,z)$ is continuous;
\item [(ii)] $g$ satisfies the uniform Lipschitz condition in $z$, i.e., there exists a constant $\lambda\geq 0$ such that $\as$, $\RE\ y\in\R, z_1,z_2\in\R^{d}$,
$$
|g(\omega,t,y,z_1)-g(\omega,t,y,z_2)|\leq \lambda |z_1-z_2|.
$$
\end{itemize}

\renewcommand{\theenumi}{(H3s)}
\renewcommand{\labelenumi}{\theenumi}

\item \label{A:(H3s)}
$g$ has a linear growth in $y$, i.e., there exists a constant $\mu\geq 0$ and a nonnegative process $f_\cdot\in \hcal^p$ such that $\as$, $\RE\ y\in \R$, $|g(\omega,t,y,0)|\leq f_t(\omega)+ \mu |y|$.\vspace{0.2cm}
\end{enumerate}

\begin{rmk}\label{rmk:2-(H1s)Stronger(H1)}
It is clear that assumptions \ref{A:(H1s)}, \ref{A:(H2s)} and \ref{A:(H3s)} are respectively (strictly) stronger than \ref{A:(H1)}, \ref{A:(H2)} and \ref{A:(H3)}. And, (ii) of \ref{A:(H2)} implies (ii) of \ref{A:(H2')}.\vspace{0.2cm}
\end{rmk}

Moreover, the following several assumptions will be used in some technical results of this paper.

\begin{enumerate}

\renewcommand{\theenumi}{(AA)}
\renewcommand{\labelenumi}{\theenumi}

\item \label{A:(AA)} There exist two nonnegative constants $\bar\mu$ and $\bar\lambda$ such that $\as$, $\RE\ y\in\R,\ z\in\R^{d}$,
$$g(\omega,t,y,z){\rm sgn}(y)\leq \bar f_t(\omega)+\bar\mu |y|+\bar\lambda|z|,$$
where $\bar f_\cdot$ is a nonnegative process belonging to $\hcal^p$.

\renewcommand{\theenumi}{(HH)}
\renewcommand{\labelenumi}{\theenumi}

\item \label{A:(HH)}
\begin{itemize}
\item [(i)] $g$ is continuous in $(y,z)$, i.e, $\as$, \ $g(\omega,t,\cdot,\cdot)$ is continuous;
\item [(ii)] $g$ has a general growth in $(y,z)$, i.e., there exists a constant $\lambda\geq 0$, a nonnegative process $f_\cdot\in\hcal^p$ and a nonnegative function $\psi_\cdot(r)\in {\bf S}$ such that $\as$, $\RE\ y\in\R,\ z\in\R^{d}$,
$$
|g(\omega,t,y,z)|\leq f_t(\omega)+ \psi_t(\omega,|y|)+\lambda |z|,
$$
here and hereafter, ${\bf S}$ denotes the set of nonnegative functions $\psi_t(\omega,r):\Omega\times \T\times \R_+ \mapsto\R_+$
satisfying the following two conditions:
\begin{itemize}
\item $\as$, the function $r\mapsto \psi_t(\omega,r)$ is increasing and $\psi_t(\omega,0)=0$;
\item for each $r\geq 0$, $\psi_\cdot(r)\in \hcal$.
\end{itemize}
\end{itemize}

\renewcommand{\theenumi}{(H4L)}
\renewcommand{\labelenumi}{\theenumi}

\item \label{A:(H4L)}
\begin{itemize}
\item [(i)]$L_\cdot\in \s\ ({\rm or}\  L_\cdot\equiv-\infty)$, $\xi\in \Lp$ and $L_T\leq \xi$;

\item [(ii)] There exists an $X_\cdot\in \mcal^p+\vcal^p$ such that $g(\cdot,X_\cdot,0)\in \hcal^p$ and $L_t\leq X_t$ for each $t\in \T$.
\end{itemize}

\renewcommand{\theenumi}{(H4U)}
\renewcommand{\labelenumi}{\theenumi}

\item \label{A:(H4U)}
\begin{itemize}
\item [(i)] $U_\cdot\in \s\ ({\rm or}\ U_\cdot\equiv +\infty)$, $\xi\in \Lp$ and $\xi\leq U_T$;

\item [(ii)] There exists an $X_\cdot\in \mcal^p+\vcal^p$ such that $g(\cdot,X_\cdot,0)\in \hcal^p$ and $ X_t\leq U_t$ for each $t\in \T$.\vspace{0.2cm}
\end{itemize}

\end{enumerate}

\begin{rmk}\label{rmk:2-ConnectionOf(HH)(AA)(H4)(H4L)}
It is not hard to verify that, see also Remark 2.2 in \citet{Fan2017AMS} for details,
\begin{itemize}
\item [(i)]  \ref{A:(H2)}+\ref{A:(H3)} $\Rightarrow$ \ref{A:(HH)}; \ \ref{A:(H2')}+\ref{A:(H3)} $\Rightarrow$ \ref{A:(HH)}; \ \ref{A:(H1)}+\ref{A:(HH)}(ii) $\Rightarrow$ \ref{A:(AA)}; \ \ref{A:(HH)}(ii) $\Rightarrow$ \ref{A:(H3)};
\item [(ii)] \ref{A:(H4)}  $\Rightarrow$ \ref{A:(H4L)} + \ref{A:(H4U)}; \  \ref{A:(H4L)}(ii) $\Rightarrow$ $L_\cdot^+\in \s^p$; \ \ref{A:(H4U)}(ii) $\Rightarrow$ $U_\cdot^-\in \s^p$;
\item [(iii)] $L_\cdot^+\in\s^p$ together with
    $\left(g(t,\sup\limits_{s\in [0,t]}L_s^+,0)
    \right)_{t\in\T}\in\hcal^p$ implies
    \ref{A:(H4L)}(ii);
\item [(iv)] $U_\cdot^-\in\s^p$ together with
    $\left(g(t,-\inf\limits_{s\in [0,t]}
    U_s^-,0)\right)_{t\in\T}\in\hcal^p$ implies
   \ref{A:(H4U)}(ii);
\item [(v)] If \ref{A:(H3s)} holds, then \ref{A:(H4L)}(ii) $\Leftrightarrow$ $L_\cdot^+\in \s^p$ and \ref{A:(H4U)}(ii) $\Leftrightarrow$ $U_\cdot^-\in \s^p$.\vspace{0.3cm}
\end{itemize}
\end{rmk}

\subsection{Lemma and propositions\vspace{0.2cm}}

Let us first introduce the following lemma, which comes from Lemma 3.1 in \citet{Fan2017AMS}.

\begin{lem}
\label{lem:2-Lemma1}
Let $(\bar Y_\cdot,\bar Z_\cdot,\bar V_\cdot)\in \s\times\M\times\vcal$ satisfy the following equation:
\begin{equation}
\label{eq:4-BarY=BarV}
\bar Y_t=\bar Y_T+\int_t^T {\rm d}\bar V_s-\int_t^T \bar Z_s\cdot {\rm d}B_s,\ \ t\in \T.
\end{equation}
Then, the following two assertions hold.
\begin{itemize}
\item [(i)] There exists a constant $C_1>0$ depending only on $p>0$ such that for each $t\in\T$ and each stopping time $\tau$ valued in $\T$,
$$
\Dis\E\left[\left.\left(\int_{t\wedge\tau}
^{T\wedge\tau}|\bar Z_s|^2{\rm d}s\right)^{p\over 2}\right|\F_t\right]
\leq \Dis C_1\E\left[\left.\sup\limits_{s\in [t,T]}|\bar Y_{s\wedge\tau}|^p+\sup\limits_{s\in [t,T]}\left[\left(\int_{s\wedge\tau}^{T\wedge\tau} \bar Y_r{\rm d}\bar V_r\right)^+\right]^{p\over 2}\right|\F_t\right].
$$

\item [(ii)] If $\bar Y_\cdot\in \s^p$ for some $p>1$, then there exists a constant $C_2>0$ depending only on $p$ such that for each $t\in\T$ and each stopping time $\tau$ valued in $\T$,
$$
\begin{array}{ll}
&\Dis\E\left[\left.\sup\limits_{s\in [t,T]}|\bar Y_{s\wedge\tau}|^p+
\int_{t\wedge\tau}^{T\wedge\tau} |\bar Y_s|^{p-2}\mathbbm{1}_{\{|\bar Y_s|\neq 0\}}|\bar Z_s|^2{\rm d}s
\right|\F_t\right]\vspace{0.1cm}\\
\leq &\Dis C_2\E\left[\left.|\bar Y_\tau|^p+\sup\limits_{s\in [t,T]}\left(\int_{s\wedge\tau}^{T\wedge\tau} |\bar Y_r|^{p-1}{\rm sgn}(\bar Y_r){\rm d}\bar V_r\right)^+\right|\F_t\right].
\end{array}\vspace{0.2cm}
$$
\end{itemize}
\end{lem}

The following proposition is a direct corollary of Lemma 3.2 in \citet{Fan2017AMS}.

\begin{pro}\label{pro:2-Pro1}
Assume that $p>1$, $\xi\in\Lp$, $V_\cdot\in\vcal^p$ and the generator $g$ satisfies assumption \ref{A:(AA)}. Let $(Y_\cdot,Z_\cdot)\in \s^p\times\M^p$ be a solution of BSDE $(\xi,g+{\rm d}V)$. Then, there exists a constant $C>0$ depending only on $p,\bar\mu,\bar\lambda,T$ such that for each $t\in\T$,
$$
\begin{array}{l}
\Dis \E\left[\left.\sup\limits_{s\in [t,T]}|Y_s|^p+\left(\int_t^T|Z_s|^2{\rm d}s\right)^{p\over 2}+\left(\int_t^T |g(s,Y_s,Z_s)|{\rm d}s\right)^p\right|\F_t\right]\\
\ \ \ \ \leq \Dis C\E\left[\left.|\xi|^p+|V|^p_{t,T}+\left(\int_t^T \bar f_s\ {\rm d}s\right)^p
\right|\F_t\right].\vspace{0.3cm}
\end{array}
$$
\end{pro}

By Lemma 3.4 in \citet{Fan2017AMS}, we can verify the following a priori estimate.

\begin{pro}\label{pro:2-Pro2}
Assume that $p>1$, $\xi\in\Lp$, $V_\cdot\in\vcal^p$, $K_\cdot,A_\cdot\in\vcal^{+,p}$, and the generator $g$ satisfies assumptions \ref{A:(H1)} with $\rho(\cdot)$, (ii) of \ref{A:(H2')} with $f_\cdot$, $\mu$ and $\lambda$, and (ii) of \ref{A:(H3)}.
\begin{itemize}
\item [(i)] Let $(Y_\cdot,Z_\cdot)\in \s^p\times\M^p$ be a solution of BSDE $(\xi,g+{\rm d}\bar V)$ with $\bar V_\cdot=V_\cdot+K_\cdot$, and the following assumption \ref{A:(B1)} hold:
\begin{enumerate}
\renewcommand{\theenumi}{(B1)}
\renewcommand{\labelenumi}{\theenumi}
\item \label{A:(B1)} There exists an $\bar X_\cdot\in \s^p$ such that $g(\cdot,\bar X_\cdot,0)\in \hcal^p$ and $Y_t\leq \bar X_t$ for each $t\in \T$.
\end{enumerate}
Then, there exists a constant $C>0$ depending only on $p,\mu, \lambda,A,T$ such that for each $t\in\T$,
\begin{equation}
\label{eq:3-BoundOfZandKForBSDEwithBarV}
\begin{array}{ll}
&\Dis\E\left[\left.\left(\int_t^T|Z_s|^2{\rm d}s\right)^{p\over 2}+|K_T-K_t|^p+\left(\int_t^T|g(s,Y_s,Z_s)|{\rm d}s\right)^p\right|\F_t\right]\vspace{0.1cm}\\
\leq &\Dis C\E\left[\sup\limits_{s\in [t,T]}|Y_s|^p+|V|^p_{t,T}+\sup\limits_{s\in [t,T]}|\bar X_s|^{p}+\left(\int_t^T f_s{\rm d}s\right)^p+1\right.\vspace{0.1cm}\\
&\hspace{1cm}\Dis+\left.\left.\left(\int_t^T |g(s,\bar X_s,0)|\ {\rm d}s\right)^p+\left(\int_t^T |g(s,0,0)|\ {\rm d}s\right)^p\right|\F_t\right].
\end{array}
\end{equation}

\item [(ii)] Let $(Y_\cdot,Z_\cdot)\in \s^p\times\M^p$ be a solution of BSDE $(\xi,g+{\rm d}\underline{V})$ with $\underline{V}_\cdot=V_\cdot-A_\cdot$, and the following assumption \ref{A:(B2)} hold:
\begin{enumerate}
\renewcommand{\theenumi}{(B2)}
\renewcommand{\labelenumi}{\theenumi}
\item \label{A:(B2)} There exists an $\underline{X}_\cdot\in \s^p$ such that $g(\cdot,\underline{X}_\cdot,0)\in \hcal^p$ and $\underline{X}_t\leq Y_t$ for each $t\in \T$.
\end{enumerate}
Then, there exists a constant $C>0$ depending only on $p,\mu, \lambda,A,T$ such that for each $t\in\T$,
\begin{equation}
\label{eq:3-BoundOfZandAForBSDEwithUnderlineV}
\begin{array}{ll}
&\Dis\E\left[\left.\left(\int_t^T|Z_s|^2{\rm d}s\right)^{p\over 2}+|A_T-A_t|^p+\left(\int_t^T|g(s,Y_s,Z_s)|{\rm d}s\right)^p\right|\F_t\right]\vspace{0.1cm}\\
\leq &\Dis C\E\left[\sup\limits_{s\in [t,T]}|Y_s|^p+|V|^p_{t,T}+\sup\limits_{s\in [t,T]}|\underline{X}_s|^{p}+\left(\int_t^T f_s{\rm d}s\right)^p+1\right.\vspace{0.1cm}\\
&\hspace{1cm}\Dis+\left.\left.\left(\int_t^T |g(s,\underline{X}_s,0)|\ {\rm d}s\right)^p+\left(\int_t^T |g(s,0,0)|\ {\rm d}s\right)^p\right|\F_t\right].
\end{array}\vspace{0.2cm}
\end{equation}
\end{itemize}
\end{pro}

\begin{proof} Since $g$ satisfies \ref{A:(H1)} with $\rho(\cdot)$ and \ref{A:(H2')}(ii) with $f_\cdot$, $\mu$ and $\lambda$, it follows that $\as$,
$$
\begin{array}{lll}
\Dis g(t,y,z){\rm sgn}(y)&\leq &\Dis |(g(t,y,z)-g(t,y,0)){\rm sgn}(y)|+(g(t,y,0)-g(t,0,0)){\rm sgn}(y)+|g(t,0,0))|\\
&\leq &\Dis f_t+\mu |y|+\lambda |z|+\rho(|y|)+|g(t,0,0))|\\
&\leq &\Dis f_t+|g(t,0,0))|+A+(\mu+A)|y|+\lambda |z|,\ \ \ \RE\ y\in\R$, $z\in\R^d.
\end{array}
$$
Furthermore, if $(Y_\cdot,Z_\cdot)\in \s^p\times\M^p$ is a solution of BSDE $(\xi,g+{\rm d}\bar V)$ with $\bar V_\cdot=V_\cdot+K_\cdot$, and assumption \ref{A:(B1)} holds, then it follows from \ref{A:(H1)} and \ref{A:(H2')}(ii) that $\as$,
$$
\begin{array}{lll}
\Dis -g(t,Y_t,Z_t)&\leq &\Dis
\left(g(t,\bar X_t,Z_t)
-g(t,Y_t,Z_t)\right)+\left|g(t,\bar X_t,0)-g(t,\bar X_t,Z_t)
\right|+|g(t,\bar X_t,0)|\\
&\leq &\Dis \rho(|\bar X_t-Y_t|)+f_t+\mu |\bar X_t|+\lambda |Z_t|+|g(t,\bar X_t,0)|\\
&\leq &\Dis |g(t,\bar X_t,0)|+(\mu+A)|\bar X_t|+f_t+A+A|Y_t|+\lambda |Z_t|.
\end{array}
$$
With the above two inequalities in hand and in view of the fact of $ f_\cdot, g(\cdot,0,0), g(\cdot,\bar X_\cdot,0)\in \hcal^p$ and $\bar X_\cdot\in \s^p$, the desired estimate \eqref{eq:3-BoundOfZandKForBSDEwithBarV} follows immediately from Lemma 3.4 in \citet{Fan2017AMS}. Finally, in view of (iv) in \cref{rmk:2-ConnectionOfBSDEandRBSDEandDRBSDE}, we know that (ii) of \cref{pro:2-Pro2} is also true.
\end{proof}

Now, let us recall several important results on reflected BSDEs with one continuous barrier and non-reflected BSDEs obtained in \citet{Fan2017AMS}, by using (iv) in \cref{rmk:2-ConnectionOfBSDEandRBSDEandDRBSDE} for the case of $\bar{R}$BSDEs, see Theorem 4.4, Corollary 4.5, Remark 4.6, Theorem 5.2, Corollary 5.4, Theorem 5.8, Corollary 5.9, Remark 5.10, Theorem 5.11, Theorem 5.13, Remark 5.14 and Proposition 5.15 in \citet{Fan2017AMS} for more details.

\begin{pro}
\label{pro:2-ExitenceUniquenessOfBSDEsAndRBSDEs}
Assume that $p>1$, $\xi\in\Lp$, $V_\cdot\in\vcal^p$ and the generator $g$ satisfies assumptions \ref{A:(H1)}, \ref{A:(H2)} and \ref{A:(H3)}. We have the following assertions.

\begin{itemize}
\item [(i)] BSDE $(\xi,g+{\rm d}V)$ admits a unique solution $(Y_t,Z_t)_{t\in\T}$ in $\s^p\times\M^p$.
\item [(ii)] Assume further that \ref{A:(H4L)} holds for $\xi$, $L_\cdot$ and $X_\cdot$ Then, \underline{R}BSDE $(\xi,g+{\rm d}V,L)$ admits a unique solution $(Y_t,Z_t,K_t)_{t\in\T}$ in $\s^p\times\M^p\times\vcal^{+,p}$.
\item [(iii)] Assume further that \ref{A:(H4U)} holds for $\xi$, $U_\cdot$ and $X_\cdot$ Then, $\bar{R}$BSDE $(\xi,g+{\rm d}V,U)$ admits a unique solution $(Y_t,Z_t,A_t)_{t\in\T}$ in $\s^p\times\M^p\times\vcal^{+,p}$.
\end{itemize}
\end{pro}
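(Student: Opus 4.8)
The plan is to handle the three assertions by reducing everything to the one-lower-barrier and non-reflected results already available from \citet{Fan2016AMS}, and to use the reflection symmetry recorded in (iv) of \cref{rmk:2-ConnectionOfBSDEandRBSDEandDRBSDE} to pass from the lower-barrier setting to the upper-barrier setting. No new a priori estimate is needed; the whole argument is an invocation of the cited results together with a careful check that the relevant assumptions are stable under negation.

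For (i), I would simply quote the non-reflected existence and uniqueness theorem of \citet{Fan2016AMS}, which gives a unique $(y_t,z_t)_{t\in\T}\in\s^p\times\M^p$ for BSDE $(\xi,g+{\rm d}V)$ under $p>1$, $\xi\in\Lp$, $V_\cdot\in\vcal^p$ and \ref{A:(H1)}--\ref{A:(H3)}; it may equivalently be read off from (ii) by taking $L_\cdot\equiv-\infty$ (so that $K_\cdot\equiv0$), which is permitted since \ref{A:(H4L)} then holds trivially with witness $X_\cdot\equiv0$ by \ref{A:(H3)}(ii). For (ii), I would invoke the one-lower-barrier theorem of \citet{Fan2016AMS}: under the same hypotheses on $g$ together with \ref{A:(H4L)}, the equation \underline{R}BSDE $(\xi,g+{\rm d}V,L)$ has a unique solution in $\s^p\times\M^p\times\vcal^{+,p}$, the process $X_\cdot\in\mcal^p+\vcal^p$ furnished by \ref{A:(H4L)}(ii) being exactly what drives the $L^p$ integrability of $(y_t,z_t,K_t)_{t\in\T}$.

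The only item requiring a genuine reduction is (iii). I would set $\bar\xi:=-\xi$, $\bar V_\cdot:=-V_\cdot$, $\bar L_\cdot:=-U_\cdot$ and $\bar g(\omega,t,y,z):=-g(\omega,t,-y,-z)$, so that by (iv) of \cref{rmk:2-ConnectionOfBSDEandRBSDEandDRBSDE} a triple $(Y_t,Z_t,A_t)_{t\in\T}$ solves $\bar{R}$BSDE $(\xi,g+{\rm d}V,U)$ if and only if $(-Y_t,-Z_t,A_t)_{t\in\T}$ solves \underline{R}BSDE $(\bar\xi,\bar g+{\rm d}\bar V,\bar L)$. The key step is to verify that the transformed data meet the hypotheses of (ii). I would check that $\bar g$ still satisfies \ref{A:(H1)} with the same $\rho(\cdot)$ (because ${\rm sgn}(y_1-y_2)=-{\rm sgn}((-y_1)-(-y_2))$ while $|y_1-y_2|$ is unchanged, the two sign flips cancel), that \ref{A:(H2)} holds with the same modulus $\phi(\cdot)$ since $|z_1-z_2|$ is invariant, and that \ref{A:(H3)} holds with the same growth function $\varphi_\cdot(r)$ and with $\bar g(\cdot,0,0)=-g(\cdot,0,0)\in\hcal^p$; moreover $\bar\xi\in\Lp$, $\bar V_\cdot\in\vcal^p$, $\bar L_T=-U_T\leq-\xi=\bar\xi$, and $\bar X_\cdot:=-X_\cdot\in\mcal^p+\vcal^p$ witnesses \ref{A:(H4L)}(ii) because $\bar g(\cdot,\bar X_\cdot,0)=-g(\cdot,X_\cdot,0)\in\hcal^p$ and $\bar L_t=-U_t\leq-X_t=\bar X_t$. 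Thus (ii) applies to the companion equation \underline{R}BSDE $(\bar\xi,\bar g+{\rm d}\bar V,\bar L)$ and, transforming back, delivers the unique $(Y_t,Z_t,A_t)_{t\in\T}\in\s^p\times\M^p\times\vcal^{+,p}$ claimed in (iii).

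Because $\s^p$ and $\M^p$ are invariant under negation of the process while $\vcal^{+,p}$ is left untouched (the increasing process $A_\cdot$ is carried over unchanged), there is no obstruction at the level of function spaces, and the entire content of the proof is the bookkeeping verification above. The main---and essentially only---point that must be done explicitly is confirming that the reflection $g\mapsto-g(\cdot,-\cdot,-\cdot)$ preserves \ref{A:(H1)}--\ref{A:(H3)} and sends \ref{A:(H4U)} to \ref{A:(H4L)}; this is elementary but is precisely what legitimizes transferring the one-barrier theorem of \citet{Fan2016AMS} to the upper-barrier case.
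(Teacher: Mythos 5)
Your proposal is correct and follows exactly the route the paper itself takes: the paper presents \cref{pro:2-ExitenceUniquenessOfBSDEsAndRBSDEs} as a recall of the non-reflected and lower-barrier results of \citet{Fan2016AMS}, with the upper-barrier case (iii) obtained from the lower-barrier case (ii) via the reflection $(\bar\xi,\bar V_\cdot,\bar U_\cdot,\bar g):=(-\xi,-V_\cdot,-L_\cdot,-g(\cdot,-y,-z))$ of (iv) in \cref{rmk:2-ConnectionOfBSDEandRBSDEandDRBSDE}. Your explicit verification that this reflection preserves \ref{A:(H1)}--\ref{A:(H3)} and carries \ref{A:(H4U)} to \ref{A:(H4L)} is precisely the bookkeeping the paper leaves implicit.
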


\begin{pro}
\label{pro:2-ComparisonOfUniquenessSolution}
Let $p>1$ and assume that for $i=1,2$, $\xi^i\in\Lp$ with $\xi^1\leq \xi^2$, $V_\cdot^i\in \vcal^p$ with ${\rm d}V^1\leq {\rm d}V^2$, and the generator $g^i$ satisfies assumptions \ref{A:(H1)}-\ref{A:(H3)}. We have the following assertions.

\begin{itemize}
\item [(i)]  For $i=1,2$, let $(Y_\cdot^i,Z_\cdot^i)$ be the unique solution of BSDE $(\xi^i,g^i+{\rm d}V^i)$ in $\s^p\times \M^p$. If
\begin{equation}
\label{eq:2-(2.6)}
\as,\ \ g^1(t,Y_t^1,Z_t^1)\leq g^2(t,Y_t^1,Z_t^1)\ \ ({\rm resp.}\  g^1(t,Y_t^2,Z_t^2)\leq g^2(t,Y_t^2,Z_t^2)),
\end{equation}
then $Y_t^1\leq Y_t^2$ for each $t\in \T$.

\item [(ii)] For $i=1,2$, suppose that \ref{A:(H4L)}(i) holds for $\xi^i$ and $L^i$, and that $(Y_\cdot^i,Z_\cdot^i,K_\cdot^i)$ is the unique solution of \underline{R}BSDE $(\xi^i,g^i+{\rm d}V^i,L^i)$ in the space $\s^p\times \M^p\times \vcal^{+,p}$. If $L^1_\cdot\leq L^2_\cdot$ and \eqref{eq:2-(2.6)} is satisfied, then
    \begin{equation}\label{eq:2-Y1LeqY2}
     Y_t^1\leq Y_t^2, \ t\in \T.
    \end{equation}
    Moreover, if  $L^1_\cdot=L^2_\cdot$ and for each $(y,z)\in \R\times \R^d$,
\begin{equation}
\label{eq:2-(2.8)}
\as,\ \ g^1(t,y,z)\leq g^2(t,y,z),
\end{equation}
then
    ${\rm d}K^1\geq {\rm d}K^2$.

\item [(iii)] For $i=1,2$, suppose that \ref{A:(H4U)}(i) holds for $\xi^i$ and  $U^i$, and that $(Y_\cdot^i,Z_\cdot^i,A_\cdot^i)$ is the unique solution of $\bar{R}$BSDE $(\xi^i,g^i+{\rm d}V^i,U^i)$ in the space $\s^p\times \M^p\times \vcal^{+,p}$. If $U^1_\cdot\leq U^2_\cdot$ and \eqref{eq:2-(2.6)} is satisfied, then \eqref{eq:2-Y1LeqY2} holds. Moreover, if  $U^1_\cdot=U^2_\cdot$ and \eqref{eq:2-(2.8)} holds, then ${\rm d}A^1\leq {\rm d}A^2$.\vspace{0.2cm}
\end{itemize}
\end{pro}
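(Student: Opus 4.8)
The plan is to reduce all three parts to the single assertion that $\hat Y^+\equiv 0$, where $\hat Y:=Y^1-Y^2$ and $\hat Z:=Z^1-Z^2$, since this is exactly the conclusion \eqref{eq:2-Y1LeqY2} (and, for (i), $Y^1_t\le Y^2_t$). For part (i), subtracting the two equations gives $\hat Y_t=\hat\xi+\int_t^T[g^1(s,Y^1_s,Z^1_s)-g^2(s,Y^2_s,Z^2_s)]\,{\rm d}s+\int_t^T{\rm d}\hat V_s-\int_t^T\hat Z_s\cdot{\rm d}B_s$, with $\hat\xi:=\xi^1-\xi^2\le 0$ and ${\rm d}\hat V:={\rm d}V^1-{\rm d}V^2\le 0$ by hypothesis. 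I would apply a standard $L^p$ (Meyer--Itô/Tanaka) formula to $(\hat Y^+_t)^p$, using $\hat\xi^+=0$ and $\int_t^T\mathbbm{1}_{\{\hat Y_s>0\}}\,{\rm d}\hat V_s\le 0$ to discard the terminal and $V$-contributions, so that only the generator difference and the martingale term survive. On $\{\hat Y_s>0\}$ I decompose, say, $g^1(Y^1,Z^1)-g^2(Y^2,Z^2)=[g^1(Y^1,Z^1)-g^1(Y^2,Z^1)]+[g^1(Y^2,Z^1)-g^1(Y^2,Z^2)]+[g^1(Y^2,Z^2)-g^2(Y^2,Z^2)]$, which is $\le\rho(\hat Y^+_s)+\phi(|\hat Z_s|)$ by \ref{A:(H1)}, \ref{A:(H2)}(ii) and \eqref{eq:2-(2.6)} respectively (the other ``resp.'' form of \eqref{eq:2-(2.6)} is handled symmetrically, using the regularity of $g^2$ instead). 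Throughout, \ref{A:(H3)} and membership of the solutions in $\s^p\times\M^p$ guarantee the integrability needed to localize and take conditional expectations.

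The $z$-term $\phi(|\hat Z_s|)$ is the crux, since $\phi$ is only a modulus of continuity. When $g$ is Lipschitz in $z$ one writes $g^1(s,Y^2_s,Z^1_s)-g^1(s,Y^2_s,Z^2_s)=a_s\cdot\hat Z_s$ with $|a_s|$ bounded, removes it by a Girsanov change to an equivalent $\mathbb Q\sim\mathbb P$ with bounded density, and is left (after discarding the reflection terms as below) with $\hat Y^+_t\le\E^{\mathbb Q}\big[\int_t^T\rho(\hat Y^+_s)\,{\rm d}s\,\big|\,\F_t\big]$. Taking $\mathbb Q$-expectation and using Jensen's inequality for the concave $\rho$ yields $v(t):=\E^{\mathbb Q}[\hat Y^+_t]\le\int_t^T\rho(v(s))\,{\rm d}s$ with $v\ge 0$ and $v(T)=0$, so $v\equiv 0$ by the backward Bihari/Osgood inequality (using $\int_{0^+}{\rm d}u/\rho(u)=+\infty$), whence $\hat Y^+\equiv 0$ because $\mathbb Q\sim\mathbb P$. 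For the general uniformly continuous case in \ref{A:(H2)}, I would first regularize each $g^i$ in $z$ by the inf-convolution $g^{i,n}(\cdot,y,z):=\inf_{z'}\{g^i(\cdot,y,z')+n|z-z'|\}$: this is Lipschitz in $z$, increases to $g^i$, preserves \ref{A:(H1)} and the pointwise ordering $g^1\le g^2$; one applies the Lipschitz comparison to the corresponding solutions and passes to the limit through the stability of $L^p$ solutions.

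For the reflected cases (ii) and (iii) the same computation applies once the reflection term is seen to carry the right sign. In (ii), on $\{\hat Y_s>0\}$ we have $Y^1_s>Y^2_s\ge L^2_s\ge L^1_s$, so $Y^1_s>L^1_s$ and hence $\mathbbm{1}_{\{\hat Y_s>0\}}\,{\rm d}K^1_s=0$ by the Skorokhod condition, while $\mathbbm{1}_{\{\hat Y_s>0\}}\,{\rm d}K^2_s\ge 0$; thus $\int_t^T\mathbbm{1}_{\{\hat Y_s>0\}}\,{\rm d}(K^1-K^2)_s\le 0$ and again only helps, giving $Y^1\le Y^2$. Part (iii) is the mirror image, obtained either by a symmetric argument or from (ii) through the transformation in \cref{rmk:2-ConnectionOfBSDEandRBSDEandDRBSDE}(iv). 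For the ``Moreover'' statement in (ii), with $L^1=L^2=:L$ and \eqref{eq:2-(2.8)}, set $\theta:=Y^2-Y^1\ge 0$. On $\{\theta>0\}$ one has $Y^2>L$, so ${\rm d}K^2=0\le{\rm d}K^1$ there; on $\{\theta=0\}$ one has $Y^1=Y^2$, hence $Z^1=Z^2$ $\as$ on that set, and Tanaka applied to $\theta=\theta^+$ gives $\mathbbm{1}_{\{\theta=0\}}\,{\rm d}\theta=\tfrac12\,{\rm d}L^0\ge 0$ with vanishing martingale part, so that $\mathbbm{1}_{\{\theta=0\}}\,{\rm d}(K^2-K^1)=-\tfrac12{\rm d}L^0-\mathbbm{1}_{\{\theta=0\}}(g^2-g^1)(Y^1,Z^1)\,{\rm d}t-\mathbbm{1}_{\{\theta=0\}}\,{\rm d}(V^2-V^1)\le 0$ by \eqref{eq:2-(2.8)} and ${\rm d}V^1\le{\rm d}V^2$. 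Combining the two sets yields ${\rm d}K^1\ge{\rm d}K^2$, and the statement ${\rm d}A^1\le{\rm d}A^2$ in (iii) follows symmetrically.

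The hard part will be the passage through the uniform-continuity modulus in $z$. The naive route of bounding $\phi(x)\le\eps+C_\eps x$ (via \cref{rmk:2-LinearGrowthOfRhoandPhi}) and absorbing the linear part by Young's inequality, or by a Girsanov shift, produces a constant $C_\eps\to+\infty$ as $\eps\to 0$ that destroys any Gronwall/Bihari bound; the essential point is therefore to keep the Osgood function $\rho$ free of $\eps$ throughout and control the limit $\eps\to 0$ carefully, which is most cleanly achieved by the reduction to the Lipschitz case together with a stability argument. The local-time identity underlying the two ``Moreover'' statements is a secondary technical point, but it is routine once $Y^1\le Y^2$ and the coincidence of the diffusion coefficients on $\{Y^1=Y^2\}$ are in hand.
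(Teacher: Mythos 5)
You should first note that the paper does not prove \cref{pro:2-ComparisonOfUniquenessSolution} at all: it is recalled, without proof, from \citet{Fan2016AMS}, and the underlying technique (the same one invoked for \cref{pro:3-ComparisonTheoremofDRBSDE} in this paper) is Tanaka's formula plus the treatment of uniform continuity in $z$ from Theorem 1 of \citet{FanJiang2012JoTP}. Your skeleton — Tanaka, the three-term decomposition of the generator difference, the sign analysis of the reflecting terms using only one Skorokhod condition (consistent with \cref{rmk:2-ComparisonWithoutIntegrableCondition}), and the local-time identity for the ``Moreover'' claims — is sound; indeed your local-time proof of ${\rm d}K^1\geq {\rm d}K^2$ is a legitimate, more direct alternative to the penalization route the paper uses for the analogous statement in \cref{cor:4-ComparisonOfdKanddA}. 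The genuine gap is precisely at what you call the crux. The inf-convolution $g^{i,n}(\cdot,y,z):=\inf_{z'}\{g^i(\cdot,y,z')+n|z-z'|\}$ preserves a \emph{pointwise} ordering $g^1\leq g^2$, i.e. \eqref{eq:2-(2.8)}, but the hypothesis of part (i) and of the first claims of (ii)--(iii) is \eqref{eq:2-(2.6)}: an ordering of the generators only along the random path $(Y^1_t,Z^1_t)$ (resp. $(Y^2_t,Z^2_t)$). At such a point, $g^{1,n}$ and $g^{2,n}$ are infima over the whole $z$-line, where no ordering is assumed, and the comparison hypothesis for the regularized equations would in any case have to hold along the \emph{regularized} solutions $(Y^{1,n},Z^{1,n})$, about which \eqref{eq:2-(2.6)} says nothing. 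So your reduction proves comparison only under \eqref{eq:2-(2.8)}, leaving the main assertions unproven. Moreover, the final limit passage ``through the stability of $L^p$ solutions'' is not free: for generators that are merely Osgood in $y$, stability estimates are themselves obtained by the very Bihari-type argument you are trying to avoid, and identifying $\lim_n Y^{i,n}$ with $Y^i$ needs uniqueness for the limit equation; the reduction is circular in difficulty even in the pointwise-ordered case.

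Your dismissal of the Girsanov route is also where the proposal parts company with the actual proof, and mistakenly so: that route is exactly the one used in \citet{FanJiang2012JoTP} (hence in \citet{Fan2016AMS} and in this paper's \cref{pro:3-ComparisonTheoremofDRBSDE}), and the blow-up of $C_\eps$ does not destroy the estimate. Writing $\phi(x)\leq \phi(\eps)+C_\eps x$, the term $C_\eps\mathbbm{1}_{\{\hat Y_s>0\}}|\hat Z_s|$ is absorbed into an equivalent measure $\Q_\eps\sim\mathbb{P}$ by Girsanov, so that $C_\eps$ never appears in the drift again; one is left with
$$
\hat Y^+_t\ \leq\ \E^{\Q_\eps}\left[\left.\int_t^T\big(\rho(\hat Y^+_s)+\phi(\eps)\big)\,{\rm d}s\ \right|\F_t\right],
$$
where only $\phi(\eps)$ survives. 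Iterating this inequality with the linear majorant $\rho(u)\leq A(1+u)$ first shows that $\hat Y^+$ is bounded by a deterministic constant, and then the Bihari--Osgood comparison applied to $w(t):=$ the essential supremum of $\hat Y^+_t$ gives $w(t)\leq r_\eps(t)$, where $r_\eps$ solves $r_\eps(t)=\int_t^T(\rho(r_\eps(s))+\phi(\eps))\,{\rm d}s$ and $\sup_t r_\eps(t)\To 0$ as $\eps\To 0$ by $\int_{0^+}{\rm d}u/\rho(u)=+\infty$. The point is that this final bound is deterministic, hence invariant under the equivalent measures $\Q_\eps$, so letting $\eps\To 0$ is legitimate even though the measures vary: the Osgood function $\rho$ stays ``free of $\eps$'' exactly as you require, and $C_\eps$ enters nowhere. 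Without reproducing this mechanism (or genuinely repairing the reduction so that it respects \eqref{eq:2-(2.6)}), the proposal does not establish the proposition.
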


\begin{rmk}
\label{rmk:2-ComparisonWithoutIntegrableCondition}
From the related proof in \citet{Fan2017AMS}, it can be observed that in order to obtain \eqref{eq:2-Y1LeqY2} in (ii) of \cref{pro:2-ComparisonOfUniquenessSolution} we do not need the condition that
$$\int_0^T (Y^2_t-L^2_t)\ {\rm d}K^2_t=0,$$
and in order to obtain \eqref{eq:2-Y1LeqY2} in (iii) of \cref{pro:2-ComparisonOfUniquenessSolution} we do not need the condition that
$$\int_0^T (U^1_t-Y^1_t)\ {\rm d}A^1_t=0.$$
This important observation will be used several times later.\vspace{0.2cm}
\end{rmk}

\begin{rmk}
\label{rmk:2-ExistenceAndComparisonOfMaximalSolution}
\cref{pro:2-ExitenceUniquenessOfBSDEsAndRBSDEs},
\cref{pro:2-ComparisonOfUniquenessSolution} and \cref{rmk:2-ComparisonWithoutIntegrableCondition} still hold if we replace \ref{A:(H2)} with \ref{A:(H2')}, \eqref{eq:2-(2.6)} with \eqref{eq:2-(2.8)}, and the word ``unique" with ``maximal (minimal)" in their statements.
\end{rmk}

\section{Necessity of assumption \ref{A:(H4)} (ii) and uniqueness of the $L^p$ solution}
\label{sec:3-NecessityUniqueness}
\setcounter{equation}{0}

\subsection{Necessity of the assumption\vspace{0.1cm}}

By the following theorem we show that under conditions of \ref{A:(H1)}, \ref{A:(H2)} (ii) (resp. \ref{A:(H2')}(ii)), \ref{A:(H3)}(ii) and \ref{A:(H4)}(i), \ref{A:(H4)}(ii) is necessary to ensure the existence of $L^p$ solutions for DRBSDEs.

\begin{thm} [Necessity of \ref{A:(H4)}(ii)] \label{thm:3-Necessity}
Assume that $p>1$, $V_\cdot\in \vcal^p$, the generator $g$ satisfies assumptions \ref{A:(H1)}, \ref{A:(H2)}(ii) (resp. \ref{A:(H2')}(ii)) and \ref{A:(H3)}(ii), and that assumption \ref{A:(H4)}(i) holds for $L_\cdot$, $U_\cdot$ and $\xi$. If DRBSDE $(\xi,g+{\rm d}V,L,U)$ admits a solution $(Y_\cdot,Z_\cdot,K_\cdot,A_\cdot)$ in the space $\s^p\times \M^p\times \vcal^{+,p}\times \vcal^{+,p}$, then $g(\cdot,Y_\cdot,0)\in \hcal^p$. That is to say, \ref{A:(H4)}(ii) holds true.
\end{thm}

\begin{proof}
We only prove the case of \ref{A:(H2')}(ii). The case of \ref{A:(H2)}(ii) can be proved in a same way. In fact, by assumptions of \cref{thm:3-Necessity}, it is easy to verify that $g$ satisfies \ref{A:(AA)} with $\bar f_\cdot:=|g(\cdot,0,0)|+f_\cdot+A$, $\bar\mu:=\mu+A$
and $\bar\lambda:=\lambda$ (see also the proof of \cref{pro:2-Pro2}), and $(Y_\cdot,Z_\cdot)\in \s^p\times \M^p$ is a solution of BSDE$(\xi,g+{\rm d}\bar V)$ with $\bar V_\cdot=V_\cdot+K_\cdot-A_\cdot\in \vcal^{p}$. Then, it follows from \cref{pro:2-Pro1} that
$$
\E\left[\left(\int_0^T |g(t,Y_t,Z_t)|{\rm d}t\right)^p\right]<+\infty.
$$
Furthermore, by \ref{A:(H2')}(ii) together with H\"{o}lder's inequality we deduce that
$$
\begin{array}{lll}
\Dis\E\left[\left(\int_0^T |g(t,Y_t,0)|{\rm d}t\right)^p\right]
&\leq &\Dis
4^p\E\left[\left(\int_0^T |g(t,Y_t,Z_t)|{\rm d}t\right)^p\right]+4^p\E\left[\left(\int_0^T f_t{\rm d}t\right)^p\right]\vspace{0.1cm}\\
&&\Dis +(4\mu T)^p\E\left[\sup\limits_{t\in\T}|Y_t|^p\right]+(4\lambda)^p \E\left[\left(\int_0^T |Z_t|^2{\rm d}t\right)^{p\over 2}\right]<+\infty.
\end{array}
$$
Then, \ref{A:(H4)}(ii) holds for $X_\cdot=Y_\cdot$. \cref{thm:3-Necessity} is then proved.
\end{proof}

\subsection{Comparison theorem\vspace{0.1cm}}

In this subsection, we first establish a general comparison theorem for the $L^p\ (p>1)$ solutions of doubly RBSDEs, and then verify the uniqueness of the $L^p$ solution under assumptions \ref{A:(H1)} and \ref{A:(H2)}(ii).

\begin{pro}[Comparison Theorem for $L^p$ solutions of DRBSDEs]
\label{pro:3-ComparisonTheoremofDRBSDE}
Let $p>1$, $V_\cdot^i\in \vcal^p$, \ref{A:(H4)}(i) hold for $\xi^i$, $L_\cdot^i$ and $U_\cdot^i$, and $(Y_\cdot^i,Z_\cdot^i,K_\cdot^i,A_\cdot^i)$ be a solution of DRBSDE $(\xi^i,g^i+{\rm d}V^i,L^i,U^i)$ in $\s^p\times \M^p\times \vcal^{+,p}\times \vcal^{+,p}$ for $i=1,2$. If $\xi^1\leq \xi^2$, ${\rm d}V^1\leq {\rm d}V^2$, $L^1_\cdot\leq L^2_\cdot$, $U^1_\cdot\leq U^2_\cdot$, and either
$$
\left\{
\begin{array}{l}
g^1\ satisfies\ \ref{A:(H1)}\ and \ \ref{A:(H2)}(ii);\\
\as,\ \ \mathbbm{1}_{\{Y^1_t>Y^2_t\}}
\left(g^1(t,Y_t^2,Z_t^2)
-g^2(t,Y_t^2,Z_t^2)\right)\leq 0
\end{array}
\right.
$$
or
$$
\left\{
\begin{array}{l}
g^2\ satisfies\ \ref{A:(H1)}\ and \ \ref{A:(H2)}(ii);\\
\as,\ \ \mathbbm{1}_{\{Y^1_t>Y^2_t\}}
\left(g^1(t,Y_t^1,Z_t^1)
-g^2(t,Y_t^1,Z_t^1)\right)\leq 0
\end{array}
\right.\vspace{0.2cm}
$$
is satisfied, then $Y_t^1\leq Y_t^2$ for each $t\in \T$.
\end{pro}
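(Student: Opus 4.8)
The plan is to prove the inequality by a direct Tanaka–Meyer argument applied to the positive part $(\hat Y)^+$ of the difference $\hat Y:=Y^1-Y^2$, isolating the four reflecting/driving finite-variation contributions and controlling them by sign, and then reducing the generator contribution to a one-sided Osgood inequality that is closed by a change of measure together with a Bihari-type lemma. By the symmetry of the two alternative hypotheses I may assume without loss of generality that $g^1$ satisfies \ref{A:(H1)} with modulus $\rho$ and \ref{A:(H2)}(ii) with modulus $\phi$, and that $\mathbbm{1}_{\{Y^1_t>Y^2_t\}}\big(g^1(t,Y^2_t,Z^2_t)-g^2(t,Y^2_t,Z^2_t)\big)\le 0$; the other case is identical after decomposing around $g^2$ at $(Y^1,Z^1)$. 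Writing $\hat Z:=Z^1-Z^2$ and applying Tanaka's formula to the continuous semimartingale $\hat Y$ gives $(\hat Y_t)^+=-\int_t^T\mathbbm{1}_{\{\hat Y_s>0\}}\,{\rm d}\hat Y_s-\tfrac12(L^0_T-L^0_t)$, where the local time $L^0\in\vcal^+$ makes the last term $\le 0$, and $(\hat Y_T)^+=(\xi^1-\xi^2)^+=0$.

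Substituting the dynamics, $-\int_t^T\mathbbm{1}_{\{\hat Y_s>0\}}{\rm d}\hat Y_s$ splits into a generator term, the driver term $\int_t^T\mathbbm{1}_{\{\hat Y_s>0\}}{\rm d}(V^1-V^2)_s$, the two reflection terms $\int_t^T\mathbbm{1}_{\{\hat Y_s>0\}}{\rm d}(K^1-K^2)_s$ and $-\int_t^T\mathbbm{1}_{\{\hat Y_s>0\}}{\rm d}(A^1-A^2)_s$, and the stochastic integral $-\int_t^T\mathbbm{1}_{\{\hat Y_s>0\}}\hat Z_s\cdot{\rm d}B_s$. The driver term is $\le 0$ since ${\rm d}V^1\le{\rm d}V^2$. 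The reflection terms are the genuinely doubly-reflected part and are disposed of purely by the barrier orderings and the flat-off (Skorokhod) conditions: on $\{\hat Y_s>0\}$ one has $Y^1_s>Y^2_s\ge L^2_s\ge L^1_s$, so $Y^1_s>L^1_s$, whence $\mathbbm{1}_{\{\hat Y_s>0\}}{\rm d}K^1_s=0$ because $\int_0^T(Y^1-L^1)\,{\rm d}K^1=0$; symmetrically $Y^2_s<Y^1_s\le U^1_s\le U^2_s$ forces $\mathbbm{1}_{\{\hat Y_s>0\}}{\rm d}A^2_s=0$. Hence the $K$-term reduces to $-\int_t^T\mathbbm{1}_{\{\hat Y_s>0\}}{\rm d}K^2_s\le 0$ and the $A$-term to $-\int_t^T\mathbbm{1}_{\{\hat Y_s>0\}}{\rm d}A^1_s\le 0$, as $K^2,A^1\in\vcal^+$.

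For the generator term I decompose, on $\{\hat Y_s>0\}$, the difference $g^1(s,Y^1,Z^1)-g^2(s,Y^2,Z^2)$ as $[g^1(s,Y^1,Z^1)-g^1(s,Y^2,Z^1)]+[g^1(s,Y^2,Z^1)-g^1(s,Y^2,Z^2)]+[g^1(s,Y^2,Z^2)-g^2(s,Y^2,Z^2)]$. By \ref{A:(H1)} the first bracket is $\le\rho((\hat Y_s)^+)$ (the sign is $+1$ there), by \ref{A:(H2)}(ii) the second is $\le\phi(|\hat Z_s|)$, and the third is $\le 0$ by the standing assumption. Collecting everything yields the reduced inequality $(\hat Y_t)^+\le\int_t^T\mathbbm{1}_{\{\hat Y_s>0\}}\big[\rho((\hat Y_s)^+)+\phi(|\hat Z_s|)\big]{\rm d}s-\int_t^T\mathbbm{1}_{\{\hat Y_s>0\}}\hat Z_s\cdot{\rm d}B_s$.

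I expect the $z$-term $\phi(|\hat Z_s|)$ to be the main obstacle: since $\phi$ is only a modulus of uniform continuity, it cannot be absorbed by a Young/quadratic estimate without destroying the Osgood property. The plan is to linearize it exactly, writing $g^1(s,Y^2,Z^1)-g^1(s,Y^2,Z^2)=\beta_s\cdot\hat Z_s$, and to absorb $\beta_s\cdot\hat Z_s$ into the Brownian integral by a Girsanov change of measure; a conditional expectation then removes the martingale and leaves $(\hat Y_t)^+\le\widetilde{\E}\big[\int_t^T\rho((\hat Y_s)^+)\,{\rm d}s\mid\F_t\big]$, and taking expectations with Jensen's inequality (concavity of $\rho$) produces a backward Bihari inequality that forces $(\hat Y)^+\equiv 0$ because $\int_{0^+}{\rm d}u/\rho(u)=+\infty$. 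The delicate point is that $\beta$ need not be bounded, so Girsanov is not immediately legitimate; I propose to resolve this by approximating $g^1$ in $z$ by its $z$-Lipschitz inf-convolutions $g^1_n$ (which increase to $g^1$, preserve \ref{A:(H1)}, and satisfy $\eps_n:=\sup_{r\ge 0}(\phi(r)-nr)\downarrow 0$), running the bounded-drift Girsanov/Bihari argument for each $n$ to get $\widetilde{\E}_n[(\hat Y_t)^+]\le\Gamma_n\to 0$, and passing to the limit via the equivalence $\widetilde{\P}_n\sim\P$. This last passage is the only technically demanding step; everything else is routine once the reflection terms have been eliminated by the sign analysis above.
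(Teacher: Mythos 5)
Up to the generator inequality your proposal coincides with the paper's own proof of \cref{pro:3-ComparisonTheoremofDRBSDE}: the same It\^o--Tanaka step applied to $(\hat Y)^+$ (with your notation $\hat Y:=Y^1-Y^2$, $\hat Z:=Z^1-Z^2$), the same elimination of the reflection terms using the barrier ordering together with only the two Skorokhod conditions $\int_0^T(Y^1_t-L^1_t)\,{\rm d}K^1_t=0$ and $\int_0^T(U^2_t-Y^2_t)\,{\rm d}A^2_t=0$ (exactly the point recorded in \cref{rmk:3-RemarkOfComparisionTheorem}), and the same three-term decomposition bounding the generator difference by $\rho((\hat Y_s)^+)+\phi(|\hat Z_s|)$ on $\{\hat Y_s>0\}$. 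At that stage the paper simply invokes the proof of Theorem 1 in \citet{FanJiang2012JoTP}, remarking that the only $L^p$-specific change is H\"older with exponents $p$ and $p/(p-1)$ in place of Cauchy--Schwarz; that H\"older application is the (routine) justification, left implicit in your sketch, that $\int\mathbbm{1}_{\{\hat Y_s>0\}}\hat Z_s\cdot{\rm d}B_s$ remains a true martingale under the auxiliary measures, since $\hat Z\in\M^p$ only, while the Girsanov densities lie in every $\mathbb{L}^q(\F_T)$.

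The genuine gap is the step you yourself flag at the end. For each $n$ your inf-convolution device amounts to $\phi(x)\le\eps_n+nx$, hence a Girsanov drift bounded by $n$, and you obtain $\widetilde{\E}_n[(\hat Y_t)^+]\le\Gamma_n\to0$ under $\widetilde{\mathbb{P}}_n\sim\mathbb{P}$; you then propose to conclude ``via the equivalence $\widetilde{\mathbb{P}}_n\sim\mathbb{P}$''. This cannot work: equivalence transfers null sets, not quantitative expectation bounds. For a fixed random variable $X\ge0$ with $\mathbb{P}(X>0)>0$ there exist measures $\Q_n\sim\mathbb{P}$ with $\E^{\Q_n}[X]\to0$ (take ${\rm d}\Q_n\propto e^{-nX}\,{\rm d}\mathbb{P}$), so the limit statement carries no information unless the densities ${\rm d}\mathbb{P}/{\rm d}\widetilde{\mathbb{P}}_n$ are controlled uniformly in $n$ --- and precisely this fails here: because $\phi$ is not Lipschitz the drift bound $n$ must blow up, all moment estimates for the densities are of order $e^{cn^2}$, and $\Gamma_n$ decays only at a rate governed by $\eps_n$ and $\rho$ (already for $\phi(x)=\sqrt{x}$ and $\rho(x)=x$ one has $\eps_n=\tfrac{1}{4n}$ and $\Gamma_n\asymp\tfrac1n$), so every H\"older-type transfer back to $\mathbb{P}$ produces a divergent product. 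The repair --- and, in substance, how the argument cited by the paper closes --- is never to take $\widetilde{\mathbb{P}}_n$-expectations at all. Keep the conditional inequality $(\hat Y_t)^+\le\widetilde{\E}_n\bigl[\int_t^T\bigl(\rho((\hat Y_s)^+)+\eps_n\bigr)\,{\rm d}s\,\big|\,\F_t\bigr]$ and iterate it into itself, using the tower property, the monotonicity and concavity of $\rho$ (conditional Jensen), and the linear growth $\rho(x)\le A(x+1)$ of \cref{rmk:2-LinearGrowthOfRhoandPhi} to dominate all iterates by the $\widetilde{\mathbb{P}}_n$-integrable variable $\bigl(\sup_{s\in\T}(\hat Y_s)^++1+\eps_n/A\bigr)e^{AT}$. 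This yields the \emph{deterministic} bound $(\hat Y_t)^+\le r_n(t)$ a.s., where $r_n$ is the unique solution of $r(t)=\int_t^T\bigl(\rho(r(s))+\eps_n\bigr)\,{\rm d}s$ (unique because the forcing $\eps_n>0$ removes the degeneracy of $\rho$ at $0$). A deterministic almost-sure bound requires no transfer between equivalent measures; letting $n\to\infty$, the Osgood condition $\int_{0^+}{\rm d}u/\rho(u)=+\infty$ forces $r_n(t)\downarrow0$, hence $Y^1_t\le Y^2_t$ for each $t\in\T$.
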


\begin{proof}
It follows from It\^{o}-Tanaka's formula that
$$
\begin{array}{lll}
\Dis  (Y^1_t-Y^2_t)^+ &\leq &\Dis (\xi^1-\xi^2)^++\int_t^T {\rm sgn}((Y^1_s-Y^2_s)^+)({\rm d}V_s^1-{\rm d}V_s^2)\vspace{0.1cm}\\
&&\Dis +\int_t^T{\rm sgn}((Y^1_s-Y^2_s)^+)\left(g^1(s,Y_s^1,Z_s^1)
-g^2(s,Y_s^2,Z_s^2)\right){\rm d}s\vspace{0.1cm}\\
&&\Dis+\int_t^T {\rm sgn}((Y^1_s-Y^2_s)^+)\left({\rm d}K_s^1-{\rm d}K_s^2\right)+\int_t^T {\rm sgn}((Y^1_s-Y^2_s)^+)\left({\rm d}A_s^2-{\rm d}A_s^1\right)\vspace{0.1cm}\\
&&\Dis +\int_t^T {\rm sgn}((Y^1_s-Y^2_s)^+)(Z_s^1-Z_s^2){\rm d}B_s,\ \ t\in \T.
\end{array}
$$
Since $L_t^1\leq L_t^2\leq Y_t^2$, $L_t^1\leq Y_t^1$, $t\in \T$ and $\int_0^T(Y^1_s-L_s^1){\rm d}K_s^1=0$, we know that for $t\in \T$,
$$
\begin{array}{lll}
\Dis \int_t^T {\rm sgn}((Y^1_s-Y^2_s)^+)\left({\rm d}K_s^1-{\rm d}K_s^2\right)
&\leq & \Dis \int_t^T {\rm sgn}((Y^1_s-Y^2_s)^+){\rm d}K_s^1\leq \int_t^T {\rm sgn}((Y^1_s-L_s^1)^+){\rm d}K_s^1\vspace{0.1cm}\\
&=&\Dis\int_t^T \mathbbm{1}_{\{Y^1_s>L_s^1\}}|Y^1_s-L_s^1|^{-1}(Y^1_s-L_s^1){\rm d}K_s^1=0.
\end{array}
$$
Similarly, since $Y^1_t\leq U_t^1\leq U_t^2$, $Y^2_t\leq U_t^2$, $t\in \T$ and $\int_0^T(U^2_s-Y_s^2){\rm d}A_s^2=0$, we know that for $t\in \T$,
$$
\begin{array}{lll}
\Dis \int_t^T {\rm sgn}((Y^1_s-Y^2_s)^+)\left({\rm d}A_s^2-{\rm d}A_s^1\right)
&\leq & \Dis \int_t^T {\rm sgn}((Y^1_s-Y^2_s)^+){\rm d}A_s^2\leq \int_t^T {\rm sgn}((U^2_s-Y_s^2)^+){\rm d}A_s^2\vspace{0.1cm}\\
&=&\Dis\int_t^T \mathbbm{1}_{\{U^2_s>Y_s^2\}}|U^2_s-Y_s^2|^{-1}
(U^2_s-Y_s^2){\rm d}A_s^2=0.
\end{array}
$$
Thus, in view of $\xi^1\leq \xi^2$ and ${\rm d}V^1\leq {\rm d}V^2$, by the previous three inequalities we obtain
$$
\begin{array}{lll}
\Dis  (Y^1_t-Y^2_t)^+ &\leq &\Dis \int_t^T{\rm sgn}((Y^1_s-Y^2_s)^+)\left(g^1(s,Y_s^1,Z_s^1)
-g^2(s,Y_s^2,Z_s^2)\right){\rm d}s\vspace{0.1cm}\\
&&\Dis +\int_t^T {\rm sgn}((Y^1_s-Y^2_s)^+)(Z_s^1-Z_s^2){\rm d}B_s,\ \ t\in\T.
\end{array}
$$
Now, in view of the assumptions of $g^1$ and $g^2$, the rest proof runs as that in the proof of Theorem 4.4 in \citet{Fan2017AMS}. The proof is complete. \vspace{0.2cm}
\end{proof}

By \cref{pro:3-ComparisonTheoremofDRBSDE}, the following corollary follows immediately.

\begin{cor}
\label{cor:3-CorollaryOfComparisonTheormen}
Let $p>1$, $V_\cdot^i\in \vcal^p$, \ref{A:(H4)}(i) hold for $\xi^i$, $L_\cdot^i$ and $U_\cdot^i$, and $(Y_\cdot^i,Z_\cdot^i,K_\cdot^i,A_\cdot^i)$ be a solution of DRBSDE $(\xi^i,g^i+{\rm d}V^i,L^i,U^i)$ in $\s^p\times \M^p\times \vcal^{+,p}\times \vcal^{+,p}$ for $i=1,2$. If $\xi^1\leq \xi^2$, ${\rm d}V^1\leq {\rm d}V^2$, $L^1_\cdot\leq L^2_\cdot$, $U^1_\cdot\leq U^2_\cdot$, $g^1$ or $g^2$ satisfies \ref{A:(H1)} and \ref{A:(H2)}(ii), and for each $(y,z)\in \R\times \R^d$,
$$\as,\ \ g^1(t,y,z)\leq g^2(t,y,z),$$
then $Y_t^1\leq Y_t^2$ for each $t\in \T$.\vspace{0.2cm}
\end{cor}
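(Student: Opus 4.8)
The plan is to deduce the corollary immediately from the general comparison theorem \cref{pro:3-ComparisonTheoremofDRBSDE}, since the blanket pointwise hypothesis imposed here is just a convenient special case of the integrand-level conditions required there. All the order relations $\xi^1\leq\xi^2$, ${\rm d}V^1\leq{\rm d}V^2$, $L^1_\cdot\leq L^2_\cdot$, $U^1_\cdot\leq U^2_\cdot$ are retained verbatim, so the only thing to check is that the hypothesis ``$\as,\ g^1(t,y,z)\leq g^2(t,y,z)$ for each $(y,z)$'' produces one of the two integrand inequalities appearing in \cref{pro:3-ComparisonTheoremofDRBSDE}.

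First I would specialize the pointwise inequality to the arguments carried by the two solutions. Choosing a countable dense subset of $\R\times\R^d$ and invoking the measurability of the generators together with their continuity in the spatial variables, the a.e.\ inequality valid for each fixed $(y,z)$ transfers, off a single $\as$ null set, to the progressively measurable processes $(Y^i_\cdot,Z^i_\cdot)$; in particular one gets
$$
g^1(t,Y_t^2,Z_t^2)\leq g^2(t,Y_t^2,Z_t^2)\quad\text{and}\quad g^1(t,Y_t^1,Z_t^1)\leq g^2(t,Y_t^1,Z_t^1),\ \ \as
$$
Since $\mathbbm{1}_{\{Y^1_t>Y^2_t\}}\geq 0$, multiplying these two inequalities by the indicator preserves their sign and yields exactly
$$
\mathbbm{1}_{\{Y^1_t>Y^2_t\}}\bigl(g^1(t,Y_t^2,Z_t^2)-g^2(t,Y_t^2,Z_t^2)\bigr)\leq 0,\qquad \mathbbm{1}_{\{Y^1_t>Y^2_t\}}\bigl(g^1(t,Y_t^1,Z_t^1)-g^2(t,Y_t^1,Z_t^1)\bigr)\leq 0,
$$
which are precisely the second conditions in the first and second alternatives of \cref{pro:3-ComparisonTheoremofDRBSDE}, respectively.

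Finally I would split on the standing hypothesis ``$g^1$ or $g^2$ satisfies \ref{A:(H1)} and \ref{A:(H2)}(ii)''. If $g^1$ satisfies them, the first alternative of \cref{pro:3-ComparisonTheoremofDRBSDE} applies, using the first integrand inequality above; if instead $g^2$ satisfies them, the second alternative applies, using the second integrand inequality. In either case \cref{pro:3-ComparisonTheoremofDRBSDE} delivers $Y^1_t\leq Y^2_t$ for each $t\in\T$, as claimed. There is no substantive obstacle in this argument; the only step deserving a word of care is the transfer of the pointwise-in-$(y,z)$ inequality to the solution trajectories, which is routine in the standard Carathéodory framework (countable dense set plus continuity of the generators in the spatial variables).
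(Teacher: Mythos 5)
Your overall route coincides with the paper's: the paper deduces \cref{cor:3-CorollaryOfComparisonTheormen} directly from \cref{pro:3-ComparisonTheoremofDRBSDE}, and your case split on which of $g^1,g^2$ satisfies \ref{A:(H1)} and \ref{A:(H2)}(ii), together with multiplication by $\mathbbm{1}_{\{Y^1_t>Y^2_t\}}$, is exactly how that reduction goes. The gap is in the one step you dismiss as routine. To pass from ``for each fixed $(y,z)$, $\as$, $g^1(t,y,z)\leq g^2(t,y,z)$'' to the same inequality along the processes $(Y^i_t,Z^i_t)$, you invoke ``continuity of the generators in the spatial variables''. The corollary grants no such continuity: \ref{A:(H1)} is a one-sided Osgood condition in $y$ (it does not imply continuity in $y$), \ref{A:(H2)}(ii) gives continuity in $z$ only, and, crucially, only \emph{one} of the two generators is assumed to satisfy even these; the other is merely progressively measurable for each fixed $(y,z)$. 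So the countable-dense-set argument cannot be run: along a sequence of dense points converging in the $y$-direction, neither $g^1$ nor $g^2$ is controlled at the limit point.

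Moreover, with exceptional sets allowed to depend on $(y,z)$, the desired transfer is genuinely false, not merely unproved. Take $c>0$, $g^1\equiv 0$, $g^2(\omega,t,y,z):=-c\,\mathbbm{1}_{\{y=ct\}}$, $\xi^1=\xi^2=cT$, $V^1_\cdot=V^2_\cdot\equiv 0$, and constant barriers $L^1_\cdot=L^2_\cdot\equiv -M$, $U^1_\cdot=U^2_\cdot\equiv M$ with $M>cT$. Then $(cT,0,0,0)$ solves DRBSDE $(\xi^1,g^1+{\rm d}V^1,L^1,U^1)$, and $(ct,0,0,0)$ solves DRBSDE $(\xi^2,g^2+{\rm d}V^2,L^2,U^2)$ (along this solution $g^2(t,Y^2_t,Z^2_t)\equiv -c$); $g^1$ satisfies \ref{A:(H1)} and \ref{A:(H2)}(ii), and for each fixed $(y,z)$ one has $g^1(t,y,z)=0\leq g^2(t,y,z)$ for all $t\neq y/c$, hence $\as$; yet $Y^1_t=cT>ct=Y^2_t$ for $t<T$. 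Consequently the hypothesis must be read with a single exceptional set uniform in $(y,z)$, i.e.\ $\as$, $g^1(t,y,z)\leq g^2(t,y,z)$ for all $(y,z)$ simultaneously --- which is how the corollary is actually used in the paper (in \cref{thm:3-UniquenessOfSolutions} the two generators coincide, and in the proof of \cref{thm:5-ExistenceOfDRBSDEunder(H2')(A1a)} the inequality $g_n\leq g$ holds identically by construction). Under that reading the substitution of $(Y^i_t,Z^i_t)$ is immediate and your density/continuity argument is unnecessary; under your reading, the step cannot be repaired. You should either adopt the uniform-null-set reading or add joint continuity of both generators as a hypothesis.
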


\begin{rmk}\label{rmk:3-RemarkOfComparisionTheorem}
We note that in the proof of \cref{pro:3-ComparisonTheoremofDRBSDE} the following two assumptions are not utilized:
$$
\int_0^T(Y^2_s-L_s^2){\rm d}K_s^2=0\ \ {\rm and}\ \
\int_0^T(U^1_s-Y_s^1){\rm d}A_s^1=0.
$$
In addition, it follows from Remarks \ref{rmk:2-ConnectionOfBSDEandRBSDEandDRBSDE} and \ref{rmk:2-ConnectionOf(HH)(AA)(H4)(H4L)} that if the comparison of ${\rm d}K^i$ and ${\rm d}A^i$ is not taken into account, then \cref{pro:3-ComparisonTheoremofDRBSDE} and \cref{cor:3-CorollaryOfComparisonTheormen} improve \cref{pro:2-ComparisonOfUniquenessSolution}.\vspace{0.2cm}
\end{rmk}

\begin{thm}[Uniqueness]
\label{thm:3-UniquenessOfSolutions}
Let $p>1$, $V_\cdot\in\vcal^p$, \ref{A:(H4)}(i) hold for $\xi$, $L_\cdot$ and $U_\cdot$, and the generator $g$ satisfy assumptions \ref{A:(H1)} and \ref{A:(H2)}(ii). Then DRBSDE $(\xi,g+{\rm d}V,L,U)$ admits at most one solution in $\s^p\times\M^p\times\vcal^{+,p}\times\vcal^{+,p}$, i.e, if both $(Y_\cdot,Z_\cdot,K_\cdot,A_\cdot)$ and $(Y'_\cdot,Z'_\cdot,K'_\cdot,A'_\cdot)$ are solutions of DRBSDE $(\xi,g+{\rm d}V,L,U)$ in $\s^p\times\M^p\times\vcal^{+,p}\times\vcal^{+,p}$, then $\as$, $Y_\cdot=Y'_\cdot,\ Z_\cdot=Z'_\cdot\ K_\cdot=K'_\cdot$ and $A_\cdot=A'_\cdot$.
\end{thm}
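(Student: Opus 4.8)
The plan is to prove successively that $Y_\cdot=Y'_\cdot$, then $Z_\cdot=Z'_\cdot$, and finally $K_\cdot=K'_\cdot$ and $A_\cdot=A'_\cdot$, reducing the first (and hardest-looking) equality to the comparison theorem just established, and settling the remaining identities by uniqueness of a semimartingale decomposition together with the mutual singularity built into \cref{dfn:2-Singular}.

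First I would obtain $Y_\cdot=Y'_\cdot$ by invoking \cref{pro:3-ComparisonTheoremofDRBSDE} twice with the roles of the two solutions interchanged. Both quadruples solve the DRBSDE with the \emph{same} data $\xi^1=\xi^2=\xi$, $V^1_\cdot=V^2_\cdot=V_\cdot$, $L^1_\cdot=L^2_\cdot=L_\cdot$, $U^1_\cdot=U^2_\cdot=U_\cdot$ and generator $g^1=g^2=g$, so the monotonicity hypotheses $\xi^1\leq\xi^2$, ${\rm d}V^1\leq{\rm d}V^2$, $L^1_\cdot\leq L^2_\cdot$, $U^1_\cdot\leq U^2_\cdot$ all hold with equality. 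Taking $g^1=g$, which satisfies \ref{A:(H1)} and \ref{A:(H2)}(ii) by hypothesis, the cross term $\mathbbm{1}_{\{Y_t>Y'_t\}}\big(g(t,Y'_t,Z'_t)-g(t,Y'_t,Z'_t)\big)=0$ makes the first alternative of \cref{pro:3-ComparisonTheoremofDRBSDE} trivially satisfied, yielding $Y_t\leq Y'_t$ for each $t\in\T$; exchanging the two solutions gives the reverse inequality, whence $Y_\cdot=Y'_\cdot$. Here I would note that the comparison argument uses the reflection terms only through the monotonicity of $K_\cdot,A_\cdot,K'_\cdot,A'_\cdot$ when discarding them, so the mere membership in $\vcal^+$ afforded by \cref{dfn:2-DefinitionOfBSDERBSDEDRBSDE}, rather than in $\vcal^{+,p}$, suffices for this step.

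Next, to get $Z_\cdot=Z'_\cdot$ I would subtract the two defining equations. Since $Y_\cdot=Y'_\cdot$, the driver difference reduces to $g(s,Y_s,Z_s)-g(s,Y_s,Z'_s)$ and the terminal and barrier data cancel, leaving
\begin{equation*}
\int_0^t(Z_s-Z'_s)\cdot{\rm d}B_s=\int_0^t\big(g(s,Y_s,Z_s)-g(s,Y_s,Z'_s)\big){\rm d}s+(K_t-K'_t)-(A_t-A'_t),\quad t\in\T.
\end{equation*}
The left-hand side is a continuous local martingale, while the right-hand side is a continuous process of finite variation vanishing at $0$; the driver integral is absolutely convergent because \ref{A:(H2)}(ii) together with \cref{rmk:2-LinearGrowthOfRhoandPhi} gives $|g(s,Y_s,Z_s)-g(s,Y_s,Z'_s)|\leq A|Z_s-Z'_s|+A$ with $Z_\cdot,Z'_\cdot\in\M$. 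A continuous local martingale of finite variation null at $0$ is indistinguishable from $0$, so both sides vanish; computing the quadratic variation of the left-hand side forces $\int_0^T|Z_s-Z'_s|^2{\rm d}s=0$, i.e. $Z_\cdot=Z'_\cdot$, $\as$ Feeding this back in, the driver integral disappears and we are left with $K_t-A_t=K'_t-A'_t$ for every $t\in\T$.

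The remaining and most delicate point is to separate this identity into $K_\cdot=K'_\cdot$ and $A_\cdot=A'_\cdot$. Reading $K_\cdot-A_\cdot=K'_\cdot-A'_\cdot$ as an equality of continuous finite-variation processes, the induced pathwise signed measures satisfy ${\rm d}K-{\rm d}A={\rm d}K'-{\rm d}A'$. By \cref{dfn:2-Singular}, the relations ${\rm d}K\bot{\rm d}A$ and ${\rm d}K'\bot{\rm d}A'$ furnish progressively measurable sets on whose complements ${\rm d}K$ and ${\rm d}A$ (resp. ${\rm d}K'$ and ${\rm d}A'$) are carried, so for each solution the two increasing parts are mutually singular. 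Since the Jordan--Hahn decomposition of a signed measure into mutually singular positive and negative parts is unique, and both $({\rm d}K,{\rm d}A)$ and $({\rm d}K',{\rm d}A')$ realize such a decomposition of the \emph{same} signed measure, I would conclude ${\rm d}K={\rm d}K'$ and ${\rm d}A={\rm d}A'$; combined with $K_0=K'_0=A_0=A'_0=0$ this yields $K_\cdot=K'_\cdot$ and $A_\cdot=A'_\cdot$. The main obstacle is precisely this last step: one must argue the uniqueness of the decomposition at the level of the random measures produced by \cref{dfn:2-Singular}, concretely by integrating the indicators of the two singularity sets against ${\rm d}(K-K')={\rm d}(A-A')$, rather than appealing loosely to scalar measure theory.
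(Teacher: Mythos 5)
Your proof is correct and follows essentially the same route as the paper's: the comparison theorem (\cref{pro:3-ComparisonTheoremofDRBSDE}, which the paper invokes through \cref{cor:3-CorollaryOfComparisonTheormen}) gives $Y_\cdot=Y'_\cdot$, the local-martingale versus finite-variation argument (the paper's terse ``by It\^o's formula'') gives $Z_\cdot=Z'_\cdot$ and then $K_t-A_t=K'_t-A'_t$, and the uniqueness of the Hahn--Jordan decomposition into mutually singular parts separates this into $K_\cdot=K'_\cdot$ and $A_\cdot=A'_\cdot$. Your write-up is in fact more careful than the paper's one-paragraph proof, notably in observing that the comparison step uses $K_\cdot,A_\cdot,K'_\cdot,A'_\cdot$ only through $\vcal^+$-monotonicity and in insisting that the singularity argument be carried out on the expectation measures of \cref{dfn:2-Singular} rather than by a loose appeal to scalar measure theory.
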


\begin{proof}
Firstly, it follows from \cref{cor:3-CorollaryOfComparisonTheormen} that $Y_t=Y'_t$ for each $t\in\T$. Furthermore, by It\^{o}'s formula we know that $\as$, $Z_\cdot=Z'_\cdot$, and then $K_t-A_t=K'_t-A'_t$ for each $t\in\T$. Finally, the conclusion of $K_\cdot=K'_\cdot$ and $A_\cdot=A'_\cdot$ follows from the Ham-Bananch Composition of Sign Measure.
\end{proof}

\section{Existence of the $L^p$ solution: penalization method}
\label{sec:4-ExistencePenalization}
\setcounter{equation}{0}

In this section, we will prove the existence of $L^p$ solutions for DRBSDEs under assumptions \ref{A:(H1)}, \ref{A:(H2)} (resp. \ref{A:(H2')}), \ref{A:(H3)} and \ref{A:(H4)} by showing the convergence of the sequence of $L^p$ solutions for the penalized RBSDEs with one continuous barrier and the penalized BSDEs.

\subsection{Convergence of the sequence of $L^p$ solutions for the penalized RBSDEs\vspace{0.1cm}}

The following proposition is a general convergence result on the sequence of $L^p\ (p>1)$ solutions of penalized RBSDEs with one continuous barrier under some elementary conditions.

\begin{pro} [Penalization method] \label{pro:4-Penalization}
Assume that $p>1$, $V_\cdot\in \vcal^p$, \ref{A:(H4)}(i) holds for $L_\cdot, U_\cdot$ and $\xi$, and the generator $g$ satisfies \ref{A:(HH)} with $f_\cdot, \psi_\cdot(r)$ and $\lambda$.

\begin{itemize}
\item [(i)] For each $n\geq 1$, let $( Y^n_\cdot, Z^n_\cdot, A^n_\cdot)$ be a solution of $\bar{R}$BSDE $(\xi,\bar g_n+{\rm d}V,U)$ in $\s^p\times\M^p\times \vcal^{+,p}$ with $\bar g_n(t,y,z):=g(t,y,z)+n(y-L_t)^-$, i.e.,
\begin{equation}
\label{eq:4-PenalizationForRBSDEwithSuperBarrier}
\hspace*{-0.6cm}\left\{
\begin{array}{l}
\Dis{Y}^n_t=\xi+\int_t^Tg(s,{Y}^n_s,
{Z}^n_s)
{\rm d}s+\int_t^T{\rm d}V_s+\int_t^T{\rm d} K^n_s -\int_t^T{\rm d} A^n_s-\int_t^T{Z}^n_s\cdot {\rm d}B_s,\ \ t\in\T,\vspace{0.1cm} \\
\Dis  K^n_t:=n\int_0^t(
{Y}^n_s-L_s)^-\ {\rm d}s,\ \ t\in\T,\\
\Dis {Y}^n_t\leq U_t,\ t\in\T\ \ {\rm and} \ \int_0^T (U_t- Y^n_t){\rm d} A^n_t=0.
\end{array}
\right.
\end{equation}
If for each $n\geq 1$, $Y^n_\cdot\leq Y^{n+1}_\cdot\leq \bar X_\cdot$ for some $\bar X_\cdot\in \s^p$ and ${\rm d} A^n\leq {\rm d} A^{n+1}$, and
\begin{equation}\label{eq:4-ZnKnAnLeqEta}
\sup\limits_{n\geq 1}\E\left[\left(\int_0^T|Z_t^n|^2{\rm d}t\right)^{p\over 2}+|K_T^n|^p+|A_T^n|^p+\left(\int_0^T|g(t,Y_t^n,Z_t^n)|
{\rm d}t\right)^p\right]<+\infty,
\end{equation}
then there exists a quadruple $( Y_\cdot, Z_\cdot, K_\cdot, A_\cdot)\in \s^p\times\M^p\times\vcal^{+,p}\times\vcal^{+,p}$ which solves DRBSDE $(\xi,g+{\rm d}V,L,U)$,
$$
\lim\limits_{n\To \infty}\left(\| Y_\cdot^n- Y_\cdot\|_{\s^p}+ \| Z_\cdot^n- Z_\cdot\|_{\M^p}+\| A_\cdot^n- A_\cdot \|_{\s^p}\right)=0,
$$
and there exists a subsequence $\{ K_\cdot^{n_j}\}$ of $\{ K_\cdot^n\}$ such that
$$
\lim\limits_{j\To\infty}\sup\limits_{t\in\T}
| K_t^{n_j}- K_t|=0.
$$

\item [(ii)] For each $n\geq 1$, let $( Y^n_\cdot, Z^n_\cdot, K^n_\cdot)$ be a solution of $\underline{R}$BSDE $(\xi,\underline g_n+{\rm d}V,L)$ in $\s^p\times\M^p\times \vcal^{+,p}$ with $\underline g_n(t,y,z):=g(t,y,z)-n(y-U_t)^+$, i.e.,
$$
\left\{
\begin{array}{l}
\Dis{Y}^n_t=\xi+\int_t^Tg(s,{Y}^n_s,
{Z}^n_s)
{\rm d}s+\int_t^T{\rm d}V_s+\int_t^T{\rm d} K^n_s -\int_t^T{\rm d} A^n_s-\int_t^T{Z}^n_s\cdot {\rm d}B_s,\ \ t\in\T,\vspace{0.1cm}\\
\Dis  A^n_t:=n\int_0^t(
{Y}^n_s-U_s)^+\ {\rm d}s,\ \ t\in\T,\\
\Dis L_t\leq {Y}^n_t,\ t\in\T\ \ {\rm and} \ \int_0^T ( Y^n_t-L_t){\rm d} K^n_t=0.
\end{array}
\right.
$$
If for each $n\geq 1$, $\underline X_\cdot \leq Y^{n+1}_\cdot\leq Y^n_\cdot$ for some $\underline X_\cdot\in \s^p$ and ${\rm d} K^n\leq {\rm d} K^{n+1}$, and \eqref {eq:4-ZnKnAnLeqEta} holds, then there exists a quadruple $( Y_\cdot, Z_\cdot, K_\cdot, A_\cdot)\in \s^p\times\M^p\times\vcal^{+,p}\times\vcal^{+,p}$ which solves DRBSDE $(\xi,g+{\rm d}V,L,U)$,
$$\lim\limits_{n\To \infty}\left(\| Y_\cdot^n- Y_\cdot\|_{\s^p}+
\| Z_\cdot^n- Z_\cdot\|_{\M^p}+\| K_\cdot^n- K_\cdot \|_{\s^p}\right)=0,$$
and there exists a subsequence $\{ A_\cdot^{n_j}\}$ of $\{ A_\cdot^n\}$ such that
$$\lim\limits_{j\To\infty}\sup\limits_{t\in\T}
| A_t^{n_j}- A_t|=0.$$
\end{itemize}
\end{pro}

\begin{proof} We only prove the claim (i). The claim (ii) can be proved in the same way. Now, we assume that $p>1$, $V_\cdot\in \vcal^p$, (i) of \ref{A:(H4)} holds for $L_\cdot, U_\cdot$ and $\xi$, the generator $g$ satisfies \ref{A:(HH)} with $f_\cdot, \psi_\cdot(r)$ and $\lambda$, and all the assumptions in (i) are satisfied.\vspace{0.2cm}

Since $Y_\cdot^n$ increases in $n$ and is bounded above by $\bar X_\cdot$, we know the existence of a progressively measurable real-valued process $Y_\cdot$ such that for each $t\in\T$,
\begin{equation}\label{eq:4-YnIncreaseBounded}
 Y_t^n\uparrow Y_t \ \ \ {\rm and} \ \ \ |Y_t|\vee \left(\sup_{n\geq 1}|Y_t^n|\right)\leq |Y_t^1|+|\bar X_t|.
\end{equation}
Since ${\rm d}A^n\leq {\rm d}A^{n+1}$ for each $n\geq 1$, we get the existence of a progressively measurable and increasing real-valued process $(A_t)_{t\in\T}$ with $A_0=0$ such that $A_t^n\uparrow A_t$ for each $t\in\T$, and for each $j\geq n\geq 1$,
$$A^j_t-A^n_t\leq A^j_T-A^n_T,\ \ t\in\T.\vspace{-0.1cm}$$
In the above inequality, first letting $j\To\infty$, and then taking the superume with respect to $t$ in $\T$, finally letting $n\To\infty$, we obtain that
\begin{equation}\label{eq:4-UniformConvergenceOfAn}
\sup\limits_{t\in\T}|A^n_t-A_t|\To 0,\ \ {\rm as}\ \ n\To\infty,
\end{equation}
which means that $A_\cdot\in\vcal^+$. On the other hand, by Fatou's lemma and \eqref{eq:4-ZnKnAnLeqEta} we deduce that
$$
\Dis\E\left[|A_T|^p\right]
=\Dis\E\left[\liminf
\limits_{n\To\infty}|A_T^n|^p\right]
\leq \liminf\limits_{n\To\infty}\E\left[|A_T^n|^p\right]
\leq \sup_{n\geq 1}\E\left[|A_T^n|^p\right] <+\infty.
$$
So, $A_\cdot\in\vcal^{+,p}$. Then, in view of \eqref{eq:4-UniformConvergenceOfAn} and the fact that $|A^n_\cdot|\leq |A_\cdot|$ for each $n\geq 1$, Lebesgue's dominated convergence theorem yields that
\begin{equation}\label{eq:4-ConvergenceOfAnInSp}
\Lim\|A^n_\cdot-A_\cdot\|_{\s^p}=0.\vspace{0.2cm}
\end{equation}

For each integer $l,q\geq 1$, we introduce the following two stopping times:
$$
\begin{array}{rll}
\tau_l&:=&\Dis \inf\left\{t\geq 0:\  |Y_t^1|+|\bar X_t|+\int_0^t f_s{\rm d}s+L_t^+\geq l\right\}\wedge T,\vspace{0.2cm}\\
\sigma_{l,q}&:=&\Dis \inf\left\{t\geq 0:\  \int_0^t \psi_s(l) {\rm d}s\geq q\right\}\wedge \tau_l\vspace{0.1cm}
\end{array}
$$
with the convention $\inf\emptyset=+\infty$. Then we have, $\tau_l\To T$ as $l\To \infty$, $\sigma_{l,q}\To \tau_l$ as $q\To \infty$ for each $l\geq 1$,
$$
\mathbb{P}\left(\left\{\omega:\ \exists l_0(\omega)\geq 1, \ \RE l\geq l_0(\omega), \ \tau_l(\omega)=T\right\}\right)=1
$$
and
\begin{equation}\label{eq:4-StabilityOfTauSigma}
\mathbb{P}\left(\left\{\omega:\ \exists l_0(\omega), q_0(\omega)\geq 1,\ \RE l\geq l_0(\omega), \RE q\geq q_0(\omega), \ \sigma_{l,q}(\omega)=T\right\}\right)=1.
\end{equation}
Furthermore, since $g$ satisfies assumption (HH) with $f_\cdot, \psi_\cdot(r)$ and $\lambda$, and \eqref{eq:4-YnIncreaseBounded} holds, it follows from the definitions of $\tau_l$ and $\sigma_{l,q}$ that $\as$, for each $n\geq 1$,
\begin{equation}\label{eq:4-BoundOfgYnZn}
|h^{n;l,q}_t|\leq \mathbbm{1}_{t\leq \tau_{l}}f_t+\mathbbm{1}_{t\leq \sigma_{l,q}}\psi_t(l)+\lambda |Z_t^n|\vspace{0.1cm}
\end{equation}
with $h^{n;l,q}_t:=\mathbbm{1}_{t\leq \sigma_{l,q}}g(t,Y_t^n,Z_t^n)$,
\begin{equation}
\label{eq:4-IntegralOfftAndPsitleqlq}
\E\left[\int_0^T \mathbbm{1}_{t\leq \tau_{l}}f_t{\rm d}t\right]\leq l\ \ {\rm and}\ \ \E\left[\int_0^T \mathbbm{1}_{t\leq \sigma_{l,q}}\psi_t(l){\rm d}t\right]\leq q.\vspace{0.2cm}
\end{equation}

The rest proof is divided into 6 steps, which will be detailed in Appendix.\vspace{0.2cm}

{\bf Step 1.}\ Based on \eqref{eq:4-PenalizationForRBSDEwithSuperBarrier}-\eqref{eq:4-IntegralOfftAndPsitleqlq}, by using a weak convergence argument together with Lemma 4.4 of \citet{Klimsiak2012EJP} and Lemma A.3 in \citet{BayraktarYao2015SPA}, we show that $Y_\cdot$ is a c\`{a}dl\`{a}g process.\vspace{0.2cm}

{\bf Step 2.}\ Making use of the conclusion of step 1 together with the definition of $K^n_\cdot$, \eqref{eq:4-ZnKnAnLeqEta} and Dini's theorem, we show that $Y_t\geq L_t$ for each $t\in \T$ and $\Lim\sup\limits_{t\in\T}(Y^n_t-L_t)^-= 0$.\vspace{0.2cm}

{\bf Step 3.}\ By virtue of (ii) of \cref{lem:2-Lemma1}, the definition of $K_\cdot^n$ and $A_\cdot^n$ with \eqref{eq:4-StabilityOfTauSigma}-\eqref{eq:4-IntegralOfftAndPsitleqlq}, H\"{o}lder's inequality, the conclusion of step 2, \eqref{eq:4-ZnKnAnLeqEta}, \eqref{eq:4-YnIncreaseBounded} and Lebesgue's dominated convergence theorem, we show that the sequence $\{Y_\cdot^n\}$ converges to the process $Y_\cdot$ in $\s^p$ as $n\to \infty$.\vspace{0.2cm}

{\bf Step 4.}\ By virtue of (i) of \cref{lem:2-Lemma1}, H\"{o}lder's inequality, \eqref{eq:4-ZnKnAnLeqEta} and the conclusion of step 3, we show that the sequence $\{Z_\cdot^n\}$ converges to a process $Z_\cdot$ in $\M^p$ as $n\to \infty$.\vspace{0.2cm}

{\bf Step 5.}\ Making use of the continuity of $g$, \eqref{eq:4-ZnKnAnLeqEta}, \eqref{eq:4-ConvergenceOfAnInSp}-\eqref{eq:4-IntegralOfftAndPsitleqlq}, and the conclusions of steps 3 and 4, we show that there exists a subsequence $\{K_\cdot^{n_j}\}$ of the sequence $\{K_\cdot^n\}$ which converges almost surely to a process $K_\cdot\in \vcal^{+,p}$ uniformly in $t$ as $j\to \infty$.\vspace{0.2cm}

{\bf Step 6.}\ Based on all the conclusions of steps 1-6, we finally show that  $(Y_\cdot,Z_\cdot, K_\cdot, A_\cdot)$ is a solution of DRBSDE $(\xi,g+{\rm d}V,L,U)$ in the space $\s^p\times\M^p\times\vcal^{+,p}\times\vcal^{+,p}$. The proof is then completed.\vspace{0.2cm}
\end{proof}

\subsection{Uniform estimate on the $L^p$ solutions of the penalized equations\vspace{0.2cm}}

In this subsection, we establish the following uniform estimate on the $L^p\ (p>1)$ solutions of the penalization equations for reflected BSDEs with one continuous barrier and non-reflected BSDEs.

\begin{pro}\label{pro:4-EstimateOfPenalizationEq}
Assume that $p>1$, $V_\cdot\in\vcal^p$, the generator $g$ satisfies assumptions \ref{A:(H1)}, \ref{A:(H2)} and \ref{A:(H3)}, and assumption \ref{A:(H4)} holds for $L_\cdot,U_\cdot,\xi$ and $X_\cdot$.
\begin{itemize}
\item [(i)] For each $n\geq 1$, let $(\underline Y^n_\cdot,\underline Z^n_\cdot,\underline A^n_\cdot)$ be the unique solution of $\bar{R}$BSDE $(\xi,\bar g_n+{\rm d}V,U)$ in $\s^p\times\M^p\times \vcal^{+,p}$ with $\bar g_n(t,y,z):=g(t,y,z)+n(y-L_t)^-$, i.e.,
\begin{equation}
\label{eq:3-PenalizationForRBSDEwithSuperBarrier}
\hspace*{-0.6cm}\left\{
\begin{array}{l}
\Dis\underline{Y}^n_t=\xi+\int_t^Tg(s,\underline{Y}^n_s,
\underline{Z}^n_s)
{\rm d}s+\int_t^T{\rm d}V_s+\int_t^T{\rm d}\underline K^n_s -\int_t^T{\rm d}\underline A^n_s-\int_t^T\underline{Z}^n_s\cdot {\rm d}B_s,\ \   t\in\T,\vspace{0.1cm}\\
\Dis \underline K^n_t:=n\int_0^t(
\underline{Y}^n_s-L_s)^-\ {\rm d}s,\ \ t\in\T,\\
\Dis \underline{Y}^n_t\leq U_t,\ t\in\T\ \ {\rm and} \ \int_0^T (U_t-\underline Y^n_t){\rm d}\underline A^n_t=0
\end{array}
\right.
\end{equation}
(Recall (ii) of \cref{rmk:2-ConnectionOf(HH)(AA)(H4)(H4L)} and (iii) of \cref{pro:2-ExitenceUniquenessOfBSDEsAndRBSDEs}). Then, $\underline Y^n_\cdot$ increases in $n$ and is bounded above by a process $\bar X_\cdot\in \s^p$, ${\rm d}\underline A^n\leq {\rm d}\underline A^{n+1}$, and  there exists a random variable $\underline \eta\in L^1(\F_T)$ such that for each $n\geq 1$ and $t\in\T$,
\begin{equation}
\label{eq:3-BoundPenalizationForRBSDESuperBarrier}
\begin{array}{l}
\Dis\E\left[\left.\sup\limits_{s\in [t,T]}|\underline Y_s^n|^p+\left(\int_t^T
|\underline Z_s^n|^2{\rm d}s\right)^{p\over 2}+|\underline K_T^n-\underline K_t^n|^p+|\underline A_T^n-\underline A_t^n|^p\right|\F_t\right]\vspace{0.1cm}\\
\ \ \ \ \Dis +\E\left[\left.\left(\int_t^T|g(s,\underline Y_s^n,\underline Z_s^n)|{\rm d}s\right)^p\right|\F_t\right]\leq \E\left[\left.\underline \eta\right|\F_t\right].
\end{array}
\end{equation}

\item [(ii)] For each $n\geq 1$, let $(\bar Y^n_\cdot,\bar Z^n_\cdot,\bar K^n_\cdot)$ be the unique solution of $\underline{R}$BSDE $(\xi,\underline g_n+{\rm d}V,L)$ in $\s^p\times\M^p\times \vcal^{+,p}$ with $\underline g_n(t,y,z):=g(t,y,z)-n(y-U_t)^+$, i.e.,
\begin{equation}
\label{eq:3-PenalizationForRBSDEwithLowerBarrier}
\hspace*{-0.8cm}\left\{
\begin{array}{l}
\Dis\bar{Y}^n_t=\xi+\int_t^Tg(s,\bar{Y}^n_s,
\bar{Z}^n_s)
{\rm d}s+\int_t^T{\rm d}V_s+\int_t^T{\rm d}\bar K^n_s -\int_t^T{\rm d}\bar A^n_s-\int_t^T\bar{Z}^n_s\cdot {\rm d}B_s,\ \   t\in\T,\vspace{0.1cm}\\
\Dis \bar A^n_t:=n\int_0^t(
\bar{Y}^n_s-U_s)^+\ {\rm d}s,\ \ t\in\T,\\
\Dis L_t\leq \bar{Y}^n_t,\ t\in\T\ \ {\rm and} \ \int_0^T (\bar Y^n_t-L_t){\rm d}\bar K^n_t=0
\end{array}
\right.
\end{equation}
(Recall (ii) of \cref{rmk:2-ConnectionOf(HH)(AA)(H4)(H4L)} and (ii) of \cref{pro:2-ExitenceUniquenessOfBSDEsAndRBSDEs}). Then, $\bar Y^{n}_\cdot$ decreases in $n$ and is bounded below by a process $\underline X_\cdot\in \s^p$, ${\rm d}\bar K^n\leq {\rm d}\bar K^{n+1}$, and there exists a random variable $\bar\eta\in L^1(\F_T)$ such that for each $n\geq 1$ and $t\in\T$,
\begin{equation}
\label{eq:3-BoundPenalizationForRBSDElowerBarrier}
\begin{array}{l}
\Dis\E\left[\left.\sup\limits_{s\in [t,T]}|\bar Y_s^n|^p+\left(\int_t^T
|\bar Z_s^n|^2{\rm d}s\right)^{p\over 2}+|\bar K_T^n-\bar K_t^n|^p+|\bar A_T^n-\bar A_t^n|^p\right|\F_t\right]\vspace{0.1cm}\\
\ \ \ \ \Dis +\E\left[\left.\left(\int_t^T|g(s,\bar Y_s^n,\bar Z_s^n)|{\rm d}s\right)^p\right|\F_t\right] \leq \E\left[\left.\bar \eta\right|\F_t\right].
\end{array}
\end{equation}

\item [(iii)] For each $n\geq 1$, let $( Y^n_\cdot, Z^n_\cdot)$ be the unique solution of BSDE $(\xi,g_n+{\rm d}V)$ in the space $\s^p\times\M^p$ with $g_n(t,y,z):=g(t,y,z)+n(y-L_t)^- -n(y-U_t)^+$ (Recall (i) of \cref{pro:2-ExitenceUniquenessOfBSDEsAndRBSDEs}), i.e.,
\begin{equation}
\label{eq:3-PenalizationForBSDE}
\hspace*{-0.8cm}\left\{
\begin{array}{l}
\Dis{Y}^n_t=\xi+\int_t^Tg(s,{Y}^n_s,
{Z}^n_s)
{\rm d}s+\int_t^T{\rm d}V_s+\int_t^T{\rm d} K^n_s -\int_t^T{\rm d} A^n_s-\int_t^T{Z}^n_s\cdot {\rm d}B_s,\ \   t\in\T,\vspace{0.1cm}\\
\Dis  K^n_t:=n\int_0^t(
{Y}^n_s-L_s)^-\ {\rm d}s\ \ {\rm and}\ \  A^n_t:=n\int_0^t(
{Y}^n_s-U_s)^+\ {\rm d}s,\ \ t\in\T.
\end{array}
\right.
\end{equation}
Then, for each $n\geq 1$ and $t\in\T$, $\underline Y^n_t\leq Y^n_t\leq \bar Y^n_t$, ${\rm d}K^n\leq {\rm d}\underline K^n$ and ${\rm d}A^n\leq {\rm d}\bar A^n$. And, there exists a random variable $\eta\in L^1(\F_T)$ such that for each $n\geq 1$ and $t\in\T$,
\begin{equation}
\label{eq:3-BoundPenalizationForBSDE}
\begin{array}{l}
\Dis\E\left[\left.\sup\limits_{s\in [t,T]}| Y_s^n|^p+\left(\int_t^T
| Z_s^n|^2{\rm d}s\right)^{p\over 2}+| K_T^n- K_t^n|^p+| A_T^n- A_t^n|^p\right|\F_t\right]\vspace{0.1cm}\\
\ \ \ \ \Dis+\E\left[\left.\left(\int_t^T|g(s, Y_s^n, Z_s^n)|{\rm d}s\right)^p\right|\F_t\right] \leq \E\left[\left. \eta\right|\F_t\right].
\end{array}\vspace{0.2cm}
\end{equation}
\end{itemize}

\begin{proof} Let $p>1$, $V_\cdot\in\vcal^p$, the generator $g$ satisfy assumptions \ref{A:(H1)} with $\rho(\cdot)$, \ref{A:(H2)} with $\phi(\cdot)$, and \ref{A:(H3)} with $\varphi_\cdot(r)$, and assumption \ref{A:(H4)} hold for $L_\cdot,U_\cdot,\xi$ and $X_\cdot$.\vspace{0.2cm}

By representation property of Brownian filtration, we can let $(C_\cdot,H_\cdot)$ be the unique pair of processes in the space $\vcal^p\times\M^p$ such that
\begin{equation}\label{eq:3-CompositionOfX}
X_t=X_T+\int_t^T {\rm d}C_s-\int_t^T H_s\cdot {\rm d}B_s,\ \ t\in\T.
\end{equation}
It follows from (ii) of \ref{A:(H2)} that $\as$,
$$
|g(\cdot,X_\cdot,H_\cdot)|\leq |g(\cdot,X_\cdot,0)|+\phi(|H_\cdot|)\leq |g(\cdot,X_\cdot,0)|+A |H_\cdot|+A,
$$
from which together with \ref{A:(H4)} we know that $g(\cdot,X_\cdot,H_\cdot)\in \hcal^p$, and then
$$
\check{K}_t:=\int_0^t g^-(s,X_s,H_s){\rm d}s+\int_0^t {\rm d}C^{0,-}_s+\int_0^t {\rm d}V^{0,-}_s \in \vcal^{+,p}\ \
$$
and
$$
\check{A}_t:=\int_0^t g^+(s,X_s,H_s){\rm d}s+\int_0^t {\rm d}C^{0,+}_s+\int_0^t {\rm d}V^{0,+}_s\in \vcal^{+,p}.\vspace{0.1cm}
$$
where $C_\cdot-C_0=C^{0,+}_\cdot-C^{0,-}_\cdot$ and $V_\cdot-V_0=V^{0,+}_\cdot-V^{0,-}_\cdot$ with $V_\cdot^{0,+},V_\cdot^{0,-},C_\cdot^{0,+},C_\cdot^{0,-}\in \vcal^{+,p}$. Thus, the equation \eqref{eq:3-CompositionOfX} can be rewritten as
$$
X_t=\Dis X_T+\int_t^Tg(s,X_s,H_s){\rm ds}+\int_t^T{\rm d}V_s+
\int_t^T{\rm d}\check{K}_s- \int_t^T{\rm d}\check{A}_s-\int_t^TH_s\cdot {\rm d}B_s,\ \ t\in\T.
$$
On the other hand, by (i) of \cref{pro:2-ExitenceUniquenessOfBSDEsAndRBSDEs}, let $(\bar X_\cdot,\bar Z_\cdot)$ be the unique solution in $\s^p\times\M^p$ of the BSDE
$$
\bar X_t=\Dis X_T\vee \xi+\int_t^Tg(s,\bar X_s,\bar Z_s){\rm ds}+\int_t^T{\rm d}V_s\Dis +\int_t^T{\rm d}\check{K}_s-\int_t^T\bar Z_s\cdot {\rm d}B_s,\ \ t\in\T,
$$
and $(\underline{X}_\cdot,\underline{Z}_\cdot)$ be the unique solution in $\s^p\times\M^p$ of the following BSDE
$$
\underline{X}_t=\Dis X_T\wedge \xi +\int_t^Tg(s,\underline{X}_s,\underline{Z}_s){\rm ds}+\int_t^T{\rm d}V_s-\int_t^T{\rm d}\check{A}_s\Dis -\int_t^T\underline{Z}_s\cdot {\rm d}B_s,\ \ t\in\T.
$$
It follows from (i) of \cref{rmk:2-ConnectionOf(HH)(AA)(H4)(H4L)} that $g$ satisfies assumption \ref{A:(AA)}. Then, \cref{pro:2-Pro1} yields that $g(\cdot,\bar X_\cdot,\bar Z_\cdot)\in\hcal^p$ and $g(\cdot,\underline X_\cdot,\underline Z_\cdot)\in\hcal^p$, which together with \ref{A:(H2)} leads to
\begin{equation}\label{eq:3-gBarXbelongsHp}
|g(\cdot,\bar X_\cdot,0)|\leq |g(\cdot,\bar X_\cdot,\bar Z_\cdot)|+\phi(|\bar Z_\cdot|)\leq |g(\cdot,\bar X_\cdot,\bar Z_\cdot)|+A|\bar Z_\cdot|+A\in\hcal^p
\end{equation}
and
\begin{equation}\label{eq:3-gUnderlineXblongsHp}
|g(\cdot,\underline X_\cdot,0)|\leq |g(\cdot,\underline X_\cdot,\underline Z_\cdot)|+\phi(|\underline Z_\cdot|)\leq |g(\cdot,\underline X_\cdot,\underline Z_\cdot)|+A|\underline Z_\cdot|+A\in\hcal^p.\vspace{0.2cm}
\end{equation}

In what follows, for each $n\geq 1$, by (i) of \cref{pro:2-ExitenceUniquenessOfBSDEsAndRBSDEs}, let $(\dot Y^n_\cdot, \dot Z^n_\cdot)$ and $(\ddot Y^n_\cdot, \ddot Z^n_\cdot)$ be respectively the unique solution in the space $\s^p\times\M^p$ of the following BSDEs:
$$
\dot{Y}^n_t=\Dis X_T\wedge \xi +\int_t^Tg(s,\dot{Y}^n_s,\dot{Z}^n_s){\rm ds}+\int_t^T{\rm d}V_s+n\int_t^T(\dot Y^n_s-L_s)^-{\rm d}s-\int_t^T{\rm d}\check{A}_s\Dis -\int_t^T\dot{Z}^n_s\cdot {\rm d}B_s,
$$
and
$$
\ddot Y^n_t=\Dis X_T\vee \xi+\int_t^Tg(s,\ddot Y^n_s,\ddot Z^n_s){\rm ds}+\int_t^T{\rm d}V_s+\int_t^T{\rm d}\check{K}_s-n\int_t^T(\ddot Y^n_s-U_s)^+{\rm d}s\Dis -\int_t^T\ddot Z^n_s\cdot {\rm d}B_s
$$
with
$$\dot K^n_t:= n\int_0^t(\dot Y^n_s-L_s)^-{\rm d}s\ \ {\rm and}\ \ \ddot A^n_t:= n\int_0^t(\ddot Y^n_s-U_s)^+{\rm d}s,\ \ \ \ t\in\T.\vspace{0.1cm}
$$
Note that $L_\cdot\leq X_\cdot\leq U_\cdot$. We have, with $t\in\T$ and $n\geq 1$,
$$
X_t=\Dis X_T+\int_t^Tg(s,X_s,H_s){\rm ds}+\int_t^T{\rm d}V_s+n\int_t^T(X_s-L_s)^-{\rm d}s+
\int_t^T{\rm d}\check{K}_s-\int_t^T{\rm d}\check{A}_s-\int_t^TH_s\cdot {\rm d}B_s,
$$
and
$$
X_t=\Dis X_T+\int_t^Tg(s,X_s,H_s){\rm ds}+\int_t^T{\rm d}V_s+\int_t^T{\rm d} \check{K}_s-n\int_t^T(X_s-U_s)^+{\rm d}s-\int_t^T{\rm d}\check{A}_s-\int_t^TH_s\cdot {\rm d}B_s.
$$
It then follows from (ii) of \ref{A:(H4)} and (i) of \cref{pro:2-ComparisonOfUniquenessSolution} that for each $n\geq 1$,
\begin{equation}\label{eq:3-DotYnLeqXLeqU}
\underline X_t\leq \dot Y^1_t\leq \dot Y^n_t\leq X_t\leq U_t,\ \ t\in\T
\end{equation}
and
\begin{equation}\label{eq:3-DDotYnGeqXGeqL}
L_t\leq X_t\leq \ddot Y^n_t\leq \ddot Y^1_t\leq \bar X_t,\ \ t\in\T,
\end{equation}
which means that for each $n\geq 1$, \ref{A:(B1)} in \cref{pro:2-Pro2} holds for $X_\cdot$, the generator $g$ and $\dot Y^n_\cdot$, and \ref{A:(B2)} in \cref{pro:2-Pro2} holds for $X_\cdot$, the generator $g$ and $\ddot Y^n_\cdot$. Thus, in view of the fact that $g$ satisfies \ref{A:(H1)}, \ref{A:(H2')} with $f_\cdot=A$, $\mu=0$ and $\lambda=A$, and \ref{A:(H3)}, by \cref{pro:2-Pro2} together with \eqref{eq:3-DotYnLeqXLeqU} and \eqref{eq:3-DDotYnGeqXGeqL} we obtain that there exists a constant $C>0$ depending only on $p,A,T$ such that for each $t\in\T$,
\begin{equation}
\label{eq:3-DotKnLeqC}
\begin{array}{lll}
\Dis\sup\limits_{n\geq 1}\E\left[\left.|\dot K^n_T-\dot K^n_t|^p\right|\F_t\right]
&\leq &\Dis C\E\left[\sup\limits_{s\in [t,T]}|\underline X_s|^p+\sup\limits_{s\in [t,T]}|\bar X_s|^{p}+|V|^p_{t,T}+
|\check{A}_T-\check{A}_t|^p+1\right.\\
&&\hspace{1cm}\Dis+\left.\left.\left(\int_t^T |g(s,X_s,0)|\ {\rm d}s\right)^p+\left(\int_t^T |g(s,0,0)|\ {\rm d}s\right)^p\right|\F_t\right]
\end{array}
\end{equation}
and
\begin{equation}
\label{eq:3-DDotAnLeqC}
\begin{array}{lll}
\Dis\sup\limits_{n\geq 1}\E\left[\left.|\ddot A^n_T-\ddot A^n_t|^p\right|\F_t\right]
&\leq &\Dis C\E\left[\sup\limits_{s\in [t,T]}|\underline X_s|^p+\sup\limits_{s\in [t,T]}|\bar X_s|^{p} +|V|^p_{t,T}+|\check{K}_T-\check{K}_t|^p
+1\right.\\
&&\hspace{1cm}\Dis+\left.\left.\left(\int_t^T |g(s,X_s,0)|\ {\rm d}s\right)^p+\left(\int_t^T |g(s,0,0)|\ {\rm d}s\right)^p\right|\F_t\right].
\end{array}
\end{equation}
In the sequel, we will prove (i)-(iii) respectively.\vspace{0.2cm}

(i) For each $n\geq 1$, let $(\underline Y^n_\cdot,\underline Z^n_\cdot,\underline A^n_\cdot)$ be the unique solution of $\bar{R}$BSDE $(\xi,\bar g_n+{\rm d}V,U)$ in the space $\s^p\times\M^p\times \vcal^{+,p}$ with $\bar g_n(t,y,z):=g(t,y,z)+n(y-L_t)^-$, i.e., \eqref{eq:3-PenalizationForRBSDEwithSuperBarrier}.\vspace{0.2cm}

Firstly, since $\bar X_\cdot\geq L_\cdot$ by \eqref{eq:3-DDotYnGeqXGeqL}, it follows from (iii) and (i) of \cref{pro:2-ComparisonOfUniquenessSolution} that for each $n\geq 1$,
\begin{equation}\label{eq:3-UnderlineYnIncrease}
\underline Y^1_t\leq \underline Y^n_t\leq \underline Y^{n+1}_t\leq \bar X_t,\ \ t\in\T\ \ {\rm and}\ \ {\rm d}\underline A^n\leq {\rm d}\underline A^{n+1}.
\end{equation}
Then, in view of \eqref{eq:3-DotYnLeqXLeqU}, by (iii) of \cref{pro:2-ComparisonOfUniquenessSolution}
with \cref{rmk:2-ComparisonWithoutIntegrableCondition}
we deduce that for each $n\geq 1$,
\begin{equation}
\label{eq:3-UnderlineXleqDotYnLeqUnderlineYn}
\underline X_t\leq \dot Y^n_t\leq \underline Y^n_t,\ \ t\in\T,
\end{equation}
which means that for each $t\in\T$,
\begin{equation}\label{eq:3-3.18}
|\underline K^n_T-\underline K^n_t|=n\int_t^T (\underline Y^n_s-L_s)^-{\rm d}s\leq n\int_t^T (\dot Y^n_s-L_s)^-{\rm d}s=|\dot K^n_T-\dot K^n_t|.
\end{equation}
Furthermore, it follows from \eqref{eq:3-gUnderlineXblongsHp} and \eqref{eq:3-UnderlineXleqDotYnLeqUnderlineYn} that for each $n\geq 1$, \ref{A:(B2)} in \cref{pro:2-Pro2} holds for $\underline X_\cdot$, the generator $g$ and $\underline Y^n_\cdot$. Thus, in view of the fact that $g$ satisfies \ref{A:(H1)}, \ref{A:(H2')} with $f_\cdot=A$, $\mu=0$ and $\lambda=A$, and \ref{A:(H3)}, by (ii) of \cref{pro:2-Pro2} we know that for each $t\in\T$ and $n\geq 1$,
\begin{equation}
\label{eq:3-BoundOfUderlineZnAndAn}
\begin{array}{lll}
&&\Dis\E\left[\left.\left(\int_t^T |\underline Z^n_s|^2\ {\rm d}s\right)^{p\over 2}+|\underline A^n_T-\underline A^n_t|^p+\left(\int_t^T |g(s,\underline Y^n_s,\underline Z^n_s)|\ {\rm d}s\right)^p\right|\F_t\right]\vspace{0.1cm}\\
&\leq &\Dis C\E\left[\sup\limits_{s\in [t,T]}|\underline Y^n_s|^p+|V|^p_{t,T}+|\underline K^n_T-\underline K^n_t|^p+\sup\limits_{s\in [t,T]}|\underline X_s|^{p}+1\right.\vspace{0.1cm}\\
&&\hspace{1cm}\Dis+\left.\left.\left(\int_t^T |g(s,\underline X_s,0)|\ {\rm d}s\right)^p+\left(\int_t^T |g(s,0,0)|\ {\rm d}s\right)^p\right|\F_t\right].
\end{array}
\end{equation}
It then follows from \eqref{eq:3-UnderlineYnIncrease}-- \eqref{eq:3-BoundOfUderlineZnAndAn} and \eqref{eq:3-DotKnLeqC} that \eqref{eq:3-BoundPenalizationForRBSDESuperBarrier} holds with
$$
\begin{array}{l}
\Dis \underline\eta:=\bar C\left[\sup\limits_{s\in [0,T]}|\underline X_s|^{p}+\sup\limits_{s\in [0,T]}|\bar X_s|^{p}+|V|^p_T+
|\check{A}_T|^p+1\right.\vspace{0.1cm}\\
\hspace*{1.5cm}\Dis \left.+\left(\int_0^T |g(s,X_s,0)|\ {\rm d}s\right)^p+\left(\int_0^T |g(s,\underline X_s,0)|\ {\rm d}s\right)^p+\left(\int_0^T |g(s,0,0)|\ {\rm d}s\right)^p\right]
\end{array}
$$
for some constant $\bar C>0$ depending only on $p,A,T$.\vspace{0.2cm}

(ii) For each $n\geq 1$, let $(\bar Y^n_\cdot,\bar Z^n_\cdot,\bar K^n_\cdot)$ be the unique solution of $\underline{R}$BSDE $(\xi,\underline g_n+{\rm d}V,L)$ in the space $\s^p\times\M^p\times \vcal^{+,p}$ with $\underline g_n(t,y,z):=g(t,y,z)-n(y-U_t)^+$, i.e., \eqref{eq:3-PenalizationForRBSDEwithLowerBarrier}.\vspace{0.2cm}

Firstly, since $\underline X_\cdot\leq U_\cdot$ by \eqref{eq:3-DotYnLeqXLeqU}, it follows from (i) and (ii) of \cref{pro:2-ComparisonOfUniquenessSolution} that for each $n\geq 1$,
\begin{equation}\label{eq:3-BarYnDecrease}
\underline X_t\leq \bar Y^{n+1}_t\leq \bar Y^{n}_t\leq \bar Y^1_t,\ \ t\in\T\ \ {\rm and}\ \ {\rm d}\bar K^n\leq {\rm d}\bar K^{n+1}.
\end{equation}
Then, in view of \eqref{eq:3-DDotYnGeqXGeqL}, by (ii) of \cref{pro:2-ComparisonOfUniquenessSolution}
with \cref{rmk:2-ComparisonWithoutIntegrableCondition}
we deduce that for each $n\geq 1$,
\begin{equation}
\label{eq:3-BarYnleqDDotYnLeqBarX}
\bar Y^n_t\leq \ddot Y^n_t \leq \bar X_t,\ \ t\in\T,
\end{equation}
which means that for each $t\in\T$,
\begin{equation}\label{eq:3-3.22}
|\bar A^n_T-\bar A^n_t|=n\int_t^T (\bar Y^n_s-U_s)^+{\rm d}s\leq n\int_t^T (\ddot Y^n_s-U_s)^+{\rm d}s=|\ddot A^n_T-\ddot A^n_t|.
\end{equation}
Furthermore, it follows from \eqref{eq:3-gBarXbelongsHp} and \eqref{eq:3-BarYnleqDDotYnLeqBarX} that for each $n\geq 1$, \ref{A:(B1)} in \cref{pro:2-Pro2} holds for $\bar X_\cdot$, the generator $g$ and $\bar Y^n_\cdot$. Thus, in view of the fact that $g$ satisfies \ref{A:(H1)}, \ref{A:(H2')} with $f_\cdot=A$, $\mu=0$ and $\lambda=A$, and \ref{A:(H3)}, by (i) of \cref{pro:2-Pro2} we know that for each $t\in\T$ and $n\geq 1$,
\begin{equation}
\label{eq:3-BoundOfBarZnAndKn}
\begin{array}{lll}
&&\Dis\E\left[\left.\left(\int_t^T |\bar Z^n_s|^2\ {\rm d}s\right)^{p\over 2}+|\bar K^n_T-\bar K^n_t|^p+\left(\int_t^T |g(s,\bar Y^n_s,\bar Z^n_s)|\ {\rm d}s\right)^p\right|\F_t\right]\vspace{0.1cm}\\
&\leq &\Dis C\E\left[\sup\limits_{s\in [t,T]}|\bar Y^n_s|^p+|V|^p_{t,T}+|\bar A^n_T-\bar A^n_t|^p+\sup\limits_{s\in [t,T]}|\bar X_s|^{p}+1\right.\vspace{0.1cm}\\
&&\hspace{1cm}\Dis+\left.\left.\left(\int_t^T |g(s,\bar X_s,0)|\ {\rm d}s\right)^p+\left(\int_t^T |g(s,0,0)|\ {\rm d}s\right)^p\right|\F_t\right].
\end{array}
\end{equation}
It then follows from \eqref{eq:3-BarYnDecrease}-- \eqref{eq:3-BoundOfBarZnAndKn} and  \eqref{eq:3-DDotAnLeqC} that \eqref{eq:3-BoundPenalizationForRBSDElowerBarrier} holds with
$$
\begin{array}{l}
\Dis \bar\eta:=\bar C\left[\sup\limits_{s\in [0,T]}|\underline X_s|^{p}+\sup\limits_{s\in [0,T]}|\bar X_s|^{p}+|V|^p_T+
|\check{K}_T|^p+1\right.\vspace{0.1cm}\\
\hspace*{1.5cm}\Dis \left.+\left(\int_0^T |g(s,X_s,0)|\ {\rm d}s\right)^p+\left(\int_0^T |g(s,\bar X_s,0)|\ {\rm d}s\right)^p+\left(\int_0^T |g(s,0,0)|\ {\rm d}s\right)^p\right]
\end{array}
$$
for some constant $\bar C>0$ depending only on $p,A,T$.\vspace{0.2cm}

(iii) For each $n\geq 1$, let $( Y^n_\cdot, Z^n_\cdot)$ be the unique solution of BSDE $(\xi,g_n+{\rm d}V)$ in the space $\s^p\times\M^p$ with $g_n(t,y,z):=g(t,y,z)+n(y-L_t)^- -n(y-U_t)^+$, i.e.,
\eqref{eq:3-PenalizationForBSDE}.\vspace{0.2cm}

It follows from \eqref{eq:3-PenalizationForRBSDEwithLowerBarrier} and \eqref{eq:3-PenalizationForRBSDEwithSuperBarrier}
that, with $t\in\T$ and $n\geq 1$,
$$
\begin{array}{l}
\underline{Y}^n_t=\Dis \xi +\int_t^Tg(s,\underline{Y}^n_s,\underline{Z}^n_s){\rm ds}+\int_t^T{\rm d}V_s+n\int_t^T(\underline Y^n_s-L_s)^-{\rm d}s-n\int_t^T(\underline Y^n_s-U_s)^+{\rm d}s\vspace{0.1cm}\\
\Dis \hspace*{1.2cm}-\int_t^T{\rm d}\underline{A}^n_s -\int_t^T\underline{Z}^n_s\cdot {\rm d}B_s,
\end{array}
$$
and
$$
\begin{array}{l}
\bar Y^n_t=\Dis \xi+\int_t^Tg(s,\bar Y^n_s,\bar Z^n_s){\rm ds}+\int_t^T{\rm d}V_s+n\int_t^T(\bar Y^n_s-L_s)^-{\rm d}s-n\int_t^T(\bar Y^n_s-U_s)^+{\rm d}s\vspace{0.1cm}\\
\Dis \hspace*{1.2cm}+\int_t^T{\rm d}\bar{K}^n_s\Dis -\int_t^T\bar Z^n_s\cdot {\rm d}B_s.\vspace{0.1cm}
\end{array}
$$
Then, (i) of \cref{pro:2-ComparisonOfUniquenessSolution} together with \eqref{eq:3-UnderlineXleqDotYnLeqUnderlineYn} and \eqref{eq:3-BarYnleqDDotYnLeqBarX} yields that for each $n\geq 1$,
\begin{equation}
\label{eq:3-UnderXLeqDotYnLeqYnLeqDDotYnLeqBarX}
\underline X_t\leq \dot Y^n_t\leq \underline Y^n_t\leq Y^n_t\leq \bar Y^n_t\leq \ddot Y^n_t\leq  \bar X_t,\ \ t\in\T,
\end{equation}
which means that for each $t\in\T$, in view of \eqref{eq:3-3.18} and \eqref{eq:3-3.22},
\begin{equation}\label{eq:3-KnLeqDotKn}
|K^n_T-K^n_t|\leq |\underline K^n_T-\underline K^n_t| \leq |\dot K^n_T-\dot K^n_t|
\end{equation}
and
\begin{equation}\label{eq:3-AnLeqDDotAn}
|A^n_T-A^n_t|\leq |\bar A^n_T-\bar A^n_t| \leq |\ddot A^n_T-\ddot A^n_t|.
\end{equation}
Furthermore, note that $g$ satisfies assumption \ref{A:(AA)} with $\bar f_\cdot=|g(\cdot,0,0)|+2A$, $\bar\mu=A$ and $\bar\lambda=A$ by (i) of \cref{rmk:2-ConnectionOf(HH)(AA)(H4)(H4L)}. It follows from \cref{pro:2-Pro1} that there exists a constant $C'>0$ depending only on $p,A,T$ such that
for each $t\in\T$ and $n\geq 1$,
\begin{equation}
\label{eq:3-BoundOfZn}
\begin{array}{lll}
&&\Dis\E\left[\left.\left(\int_t^T |Z^n_s|^2\ {\rm d}s\right)^{p\over 2}+\left(\int_t^T |g(s,Y^n_s, Z^n_s)|\ {\rm d}s\right)^p\right|\F_t\right]\\
&\leq &\Dis C'\E\left[\left.|\xi|^p +|V|^p_{t,T} +|K^n_T-K^n_t|^p+|A^n_T-A^n_t|^p+\left(\int_t^T |g(s,0,0)|\ {\rm d}s\right)^p+1\right|\F_t\right].
\end{array}
\end{equation}
Finally, in view of \eqref{eq:3-UnderXLeqDotYnLeqYnLeqDDotYnLeqBarX}--
\eqref{eq:3-BoundOfZn} and \eqref{eq:3-DotKnLeqC}--\eqref{eq:3-DDotAnLeqC}, we can deduce that \eqref{eq:3-BoundPenalizationForBSDE} holds with
$$
\begin{array}{l}
\Dis \eta:=\bar C\left[\sup\limits_{s\in [0,T]}|\underline X_s|^{p}+\sup\limits_{s\in [0,T]}|\bar X_s|^{p}+|V|^p_T+|\check{K}_T|^p+
|\check{A}_T|^p\right.\vspace{0.1cm}\\
\hspace*{1.5cm}\Dis \left.+\left(\int_0^T |g(s,X_s,0)|\ {\rm d}s\right)^p+\left(\int_0^T |g(s,0,0)|\ {\rm d}s\right)^p+1\right]
\end{array}
$$
for some constant $\bar C>0$ depending only on $p,A,T$. The proof of \cref{pro:4-EstimateOfPenalizationEq} is then complete.
\end{proof}
\end{pro}

\begin{rmk}\label{rmk:4-RemarkOfKeyEstimate}
In view of \cref{rmk:2-ExistenceAndComparisonOfMaximalSolution}, all conclusions of \cref{pro:4-EstimateOfPenalizationEq} still hold if we replace \ref{A:(H2)} with \ref{A:(H2')}, and the expression ``the unique solution" with ``the maximal (minimal) solution" in its statement. Furthermore, \eqref{eq:3-BoundPenalizationForRBSDESuperBarrier}, \eqref{eq:3-BoundPenalizationForRBSDElowerBarrier} and \eqref{eq:3-BoundPenalizationForBSDE} still hold if we replace \ref{A:(H2)} with \ref{A:(H2')}, and the expression ``the unique solution" with ``any solution" in its statement. In fact, in this case, it is enough to let $(\underline X_\cdot,\underline Z_\cdot)$ and $(\dot Y^n_\cdot,\dot Z^n_\cdot)$ be respectively the minimal solution of the corresponding equation instead of the unique solution, and let $(\ddot Y^n_\cdot,\ddot Z^n_\cdot)$ and $(\bar X_\cdot,\bar Z_\cdot)$ be respectively the maximal solution of the corresponding equation instead of the unique solution in the procedure of proof of \cref{pro:4-EstimateOfPenalizationEq}, with omitting the comparisons between the processes indexed with $n$ and $n+1$.
\end{rmk}

\subsection{Existence and uniqueness\vspace{0.1cm}}

We are now at a position to state and prove a general existence and unique result on the $L^p$ solutions of doubly RBSDEs, which, in view of Remarks \ref{rmk:2-ConnectionOfBSDEandRBSDEandDRBSDE} and \ref{rmk:2-ConnectionOf(HH)(AA)(H4)(H4L)}, strengthens \cref{pro:2-ExitenceUniquenessOfBSDEsAndRBSDEs}.

\begin{thm}
\label{thm:4-ExistenceUnder(H2)}
Assume that $p>1$, $V_\cdot\in\vcal^p$, the generator $g$ satisfies assumptions \ref{A:(H1)}, \ref{A:(H2)} and \ref{A:(H3)}, and assumption \ref{A:(H4)} holds for $L_\cdot,U_\cdot,\xi$ and $X_\cdot$ Then, DRBSDE $(\xi,g+{\rm d}V,L,U)$ admits a unique solution $(Y_\cdot,Z_\cdot,K_\cdot,A_\cdot)$ in $\s^p\times \M^p\times \vcal^{+,p}\times \vcal^{+,p}$. Moreover, we have the following assertions.
\begin{itemize}
\item [(i)] For each $n\geq 1$, let $(\underline Y^n_\cdot,\underline Z^n_\cdot,\underline A^n_\cdot)$ be the unique solution of $\bar{R}$BSDE $(\xi,\bar g_n+{\rm d}V,U)$ in $\s^p\times\M^p\times \vcal^{+,p}$ with $\bar g_n(t,y,z):=g(t,y,z)+n(y-L_t)^-$, i.e., \eqref{eq:3-PenalizationForRBSDEwithSuperBarrier}.
    Then,
\begin{equation}
\label{eq:4-ConvergenceOfUnderlineYnZnKnAn}
\lim\limits_{n\To \infty}\left(\|\underline Y_\cdot^n- Y_\cdot\|_{\s^p}+\|\underline Z_\cdot^n- Z_\cdot\|_{\M^p}+\|\underline K_\cdot^n- K_\cdot \|_{\s^p}+\|\underline A_\cdot^n- A_\cdot \|_{\s^p}\right)=0.
\end{equation}

\item [(ii)] For each $n\geq 1$, let $(\bar Y^n_\cdot,\bar Z^n_\cdot,\bar K^n_\cdot)$ be the unique solution of $\underline{R}$BSDE $(\xi,\underline g_n+{\rm d}V,L)$ in $\s^p\times\M^p\times \vcal^{+,p}$ with $\underline g_n(t,y,z):=g(t,y,z)-n(y-U_t)^+$, i.e., \eqref{eq:3-PenalizationForRBSDEwithLowerBarrier}.
Then,
\begin{equation}
\label{eq:4-ConvergenceOfBarYnZnKnAn}
\lim\limits_{n\To \infty}\left(\|\bar Y_\cdot^n- Y_\cdot\|_{\s^p}+\|\bar Z_\cdot^n- Z_\cdot\|_{\M^p}+\|\bar K_\cdot^n- K_\cdot \|_{\s^p}+\|\bar A_\cdot^n- A_\cdot \|_{\s^p}\right)=0.
\end{equation}

\item [(iii)] For each $n\geq 1$, let $( Y^n_\cdot, Z^n_\cdot)$ be the unique solution of BSDE $(\xi,g_n+{\rm d}V)$ in the space $\s^p\times\M^p$ with $g_n(t,y,z):=g(t,y,z)+n(y-L_t)^- -n(y-U_t)^+$, i.e., \eqref{eq:3-PenalizationForBSDE}. Then,
\begin{equation}
\label{eq:4-ConvergenceOfYnZnKnAn}
\lim\limits_{n\To \infty}\left(\|Y_\cdot^n- Y_\cdot\|_{\s^p}+\|Z_\cdot^n- Z_\cdot\|_{\M^p}+\|(K_\cdot^n-A_\cdot^n)-(K_\cdot-A_\cdot)\|_{\s^p}\right)=0.
\end{equation}
\end{itemize}
\end{thm}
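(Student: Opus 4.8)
The plan is to obtain existence together with the convergences (i) and (ii) by feeding the penalization estimates of \cref{pro:3-EstimateOfPenalizationEq} into the abstract convergence theorem \cref{pro:4-Penalization}, and to extract uniqueness from \cref{thm:3-UniquenessOfSolutions}. First I would note that \ref{A:(H2)}+\ref{A:(H3)} yield \ref{A:(HH)} (see \cref{rmk:2-ConnectionOf(HH)(AA)(H4)(H4L)}) and that \ref{A:(H4)}(i) holds, so the generator and barrier hypotheses of \cref{pro:4-Penalization} are in force. For claim (i), \cref{pro:3-EstimateOfPenalizationEq}(i) supplies precisely the hypotheses demanded by \cref{pro:4-Penalization}(i): the triple $(\underline Y^n_\cdot,\underline Z^n_\cdot,\underline A^n_\cdot)$ solves the penalized $\bar R$BSDE, $\underline Y^n_\cdot$ increases in $n$, ${\rm d}\underline A^n\le{\rm d}\underline A^{n+1}$, and the conditional bound by $\E[\underline\eta\,|\,\F_t]$ holds with $\underline\eta\in L^1(\F_T)$. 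Invoking \cref{pro:4-Penalization}(i) then produces a quadruple $(Y_\cdot,Z_\cdot,K_\cdot,A_\cdot)$ solving the DRBSDE, with $\underline Y^n_\cdot\to Y_\cdot$ in $\s^p$, $\underline Z^n_\cdot\to Z_\cdot$ in $\M^p$, $\underline A^n_\cdot\to A_\cdot$ in $\s^p$, and uniform $\ps$ convergence of some subsequence of $\underline K^n_\cdot$ to $K_\cdot$. Claim (ii) is obtained symmetrically from \cref{pro:3-EstimateOfPenalizationEq}(ii) and \cref{pro:4-Penalization}(ii), giving a solution whose first three components converge in full and whose $\bar K^n_\cdot\to K_\cdot$ in $\s^p$.

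Since \ref{A:(H1)} and \ref{A:(H2)}(ii) hold, \cref{thm:3-UniquenessOfSolutions} guarantees at most one solution in $\s^p\times\M^p$; hence the solutions arising from (i) and (ii) coincide. This simultaneously settles existence and uniqueness and shows that both penalization schemes aim at the same limit.

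The real work, which I expect to be the main obstacle, is to promote the subsequential uniform convergence of $\underline K^n_\cdot$ in (i) (resp.\ $\bar A^n_\cdot$ in (ii)) to full $\s^p$ convergence. Evaluating the penalized equation at $0$ and $t$ yields the identity
$$\underline K^n_t-K_t=(\underline Y^n_0-Y_0)-(\underline Y^n_t-Y_t)+(\underline A^n_t-A_t)+\int_0^t(\underline Z^n_s-Z_s)\cdot{\rm d}B_s-\int_0^t\big(g(s,\underline Y^n_s,\underline Z^n_s)-g(s,Y_s,Z_s)\big){\rm d}s.$$
On the right-hand side the $Y$- and $A$-terms tend to $0$ in $\s^p$ by the convergences already recorded, and the stochastic integral does so by the Burkholder--Davis--Gundy inequality together with $\underline Z^n_\cdot\to Z_\cdot$ in $\M^p$; consequently full $\s^p$ convergence of $\underline K^n_\cdot$ is equivalent to the $\hcal^p$ convergence $g(\cdot,\underline Y^n_\cdot,\underline Z^n_\cdot)\to g(\cdot,Y_\cdot,Z_\cdot)$. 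This last convergence is the crux: joint continuity of $g$ in $(y,z)$, a consequence of \ref{A:(H2)}(i) and (ii), combined with $\underline Y^n_\cdot\to Y_\cdot$ in $\s^p$ and $\underline Z^n_\cdot\to Z_\cdot$ in $\M^p$, gives the $\as$ convergence of the integrands along a subsequence, while the growth bound of \ref{A:(HH)} and the uniform $\hcal^p$ estimate of \cref{pro:3-EstimateOfPenalizationEq} provide the uniform integrability needed to pass to the $\hcal^p$ limit by a generalized dominated convergence (Vitali) argument. Coupling this with the uniqueness of the limit through the standard subsequence principle---every subsequence admits a further subsequence converging to the same $K_\cdot$---upgrades the convergence to the whole sequence in $\s^p$.

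Finally, claim (iii) follows by comparison and squeezing. By \cref{pro:3-EstimateOfPenalizationEq}(iii) one has $\underline Y^n_\cdot\le Y^n_\cdot\le\bar Y^n_\cdot$ and, for the reflecting processes, $\bar K^n_t\le K^n_t\le\underline K^n_t$ and $\underline A^n_t\le A^n_t\le\bar A^n_t$ for all $t\in\T$. Since $\underline Y^n_\cdot$ and $\bar Y^n_\cdot$ both converge to $Y_\cdot$ in $\s^p$ by (i) and (ii), the squeeze gives $Y^n_\cdot\to Y_\cdot$ in $\s^p$; the analogous squeeze using the full $\s^p$ convergences of $\underline K^n_\cdot,\bar K^n_\cdot$ and of $\underline A^n_\cdot,\bar A^n_\cdot$ established above gives $K^n_\cdot\to K_\cdot$ and $A^n_\cdot\to A_\cdot$ in $\s^p$, and $Z^n_\cdot\to Z_\cdot$ in $\M^p$ then follows from the a priori $Z$-estimate of \cref{lem:4-Lemma1}(i) applied to the differences. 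The single genuinely delicate point is the $\hcal^p$ convergence of the generators in the preceding paragraph; the rest is an assembly of the cited results.
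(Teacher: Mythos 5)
Your proposal follows exactly the paper's route: existence plus the full convergence of $Y,Z,A$ and subsequential convergence of $K$ in (i)--(ii) by feeding \cref{pro:3-EstimateOfPenalizationEq} into \cref{pro:4-Penalization}; uniqueness from \cref{thm:3-UniquenessOfSolutions}; the upgrade of the $K$-convergence reduced, via the equation and the BDG inequality, to the $\s^p$-convergence of $\int_0^\cdot g(s,\underline Y^n_s,\underline Z^n_s)\,{\rm d}s$; and (iii) by the squeeze $\underline Y^n_\cdot\le Y^n_\cdot\le\bar Y^n_\cdot$, ${\rm d}\bar K^n\le{\rm d}K^n\le{\rm d}\underline K^n$, ${\rm d}\underline A^n\le{\rm d}A^n\le{\rm d}\bar A^n$ together with \cref{lem:4-Lemma1}(i) for $Z^n$. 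The paper compresses the crux step into a citation of the proof of Theorem 4 of Fan (2016); you try to supply it yourself, and that is where your argument has a genuine gap.

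The gap is the uniform integrability claim. Vitali's theorem in $L^p$ requires UI of the $p$-th powers $\bigl(\int_0^T|g(s,\underline Y^n_s,\underline Z^n_s)-g(s,Y_s,Z_s)|\,{\rm d}s\bigr)^p$, and neither ingredient you invoke delivers this. The estimate \eqref{eq:3-BoundPenalizationForRBSDESuperBarrier} used unconditionally (i.e.\ at $t=0$) only gives boundedness in $L^p$, and an $L^p$-bounded family is UI after raising to any power $r<p$ but not, in general, to the power $p$ itself --- which is precisely the case you need. Nor can the \ref{A:(HH)} growth bound serve as an $L^p$-dominating function: $\psi_\cdot(r)\in\hcal$ only, so $\bigl(\int_0^T\psi_s(R)\,{\rm d}s\bigr)^p$ with $R:=\sup_n\sup_t|\underline Y^n_t|$ need not be integrable (this domination is fine for the a.s.\ subsequential convergence of the integrals, which is the sound part of your sketch, but not for the $L^p$ limit). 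What closes the gap is the \emph{conditional} form of the estimate in \cref{pro:3-EstimateOfPenalizationEq}, used at stopping times: for instance, setting $W_n:=\int_0^T|g(s,\underline Y^n_s,\underline Z^n_s)|\,{\rm d}s$ and $\tau^n_M:=\inf\{t:\int_0^t|g(s,\underline Y^n_s,\underline Z^n_s)|\,{\rm d}s\ge M/2\}\wedge T$, one gets
$$
\E\left[W_n^p\,\mathbbm{1}_{\{W_n>M\}}\right]\le 2^p\,\E\left[\mathbbm{1}_{\{\tau^n_M<T\}}\,\E\left[\left.\left(\int_{\tau^n_M}^T|g(s,\underline Y^n_s,\underline Z^n_s)|\,{\rm d}s\right)^p\right|\F_{\tau^n_M}\right]\right]\le 2^p\,\E\left[\underline\eta\,\mathbbm{1}_{\{\tau^n_M<T\}}\right],
$$
which tends to $0$ as $M\to\infty$ uniformly in $n$ by Chebyshev's inequality and $\underline\eta\in L^1(\F_T)$; alternatively one localizes with the stopping times $\sigma_{l,q}$ from the proof of \cref{pro:4-Penalization}, on which $\int_0^{\sigma_{l,q}}\psi_s(l)\,{\rm d}s\le q$ pointwise, and controls the tails $\int_{\sigma_{l,q}}^T$ by the conditional bound. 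Some device of this kind (it is the content of the Fan (2016) argument the paper invokes) must be inserted; as written, your ``generalized dominated convergence'' step fails, and the full $\s^p$-convergence of $\underline K^n_\cdot$, $\bar A^n_\cdot$, and hence all of (iii), rests on it. (A minor additional imprecision: full $\s^p$-convergence of $\underline K^n_\cdot$ is \emph{implied by}, not equivalent to, the $\hcal^p$-convergence of the generators; what it is equivalent to is the weaker $\s^p$-convergence of the integral processes, which is all the paper actually proves.)
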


\begin{proof}
The uniqueness part follows from \cref{thm:3-UniquenessOfSolutions}. With regard to (i), combining Propositions \ref{pro:4-EstimateOfPenalizationEq} and \ref{pro:4-Penalization}, in view of (i) in \cref{rmk:2-ConnectionOf(HH)(AA)(H4)(H4L)}, we can deduce that there exists a quadruple $( Y_\cdot, Z_\cdot, K_\cdot, A_\cdot)\in \s^p\times\M^p\times\vcal^{+,p}\times\vcal^{+,p}$ which solves DRBSDE $(\xi,g+{\rm d}V,L,U)$,
\begin{equation}
\label{eq:4-ConvergenceOfUnderlineYnZnAn}
\lim\limits_{n\To \infty}\left(\|\underline Y_\cdot^n- Y_\cdot\|_{\s^p}+ \|\underline Z_\cdot^n- Z_\cdot\|_{\M^p}+\|\underline A_\cdot^n- A_\cdot \|_{\s^p}\right)=0,
\end{equation}
and there exists a subsequence $\{\underline K_\cdot^{n_j}\}$ of $\{\underline K_\cdot^n\}$ such that
$$
\lim\limits_{j\To\infty}\sup\limits_{t\in\T}
|\underline K_t^{n_j}- K_t|=0.
$$
Furthermore, using a similar argument to that in the proof of Theorem 5.8 of \citet{Fan2017AMS}, we obtain that
\begin{equation}
\label{eq:4-ConvergenceOfgUnderlineYnZn}
\lim\limits_{n\To\infty}\left\| \int_0^\cdot g(s,\underline Y_s^n,\underline Z_s^n){\rm d}s-\int_0^\cdot g(s,Y_s,Z_s){\rm d}s\right\|_{\s^p}=0.
\end{equation}
Then, \eqref{eq:4-ConvergenceOfUnderlineYnZnKnAn} follows from \eqref{eq:4-ConvergenceOfUnderlineYnZnAn} and \eqref{eq:4-ConvergenceOfgUnderlineYnZn}. And, \eqref{eq:4-ConvergenceOfBarYnZnKnAn} can be proved in a same way.\vspace{0.2cm}

In the sequel, we prove (iii). Firstly, it follows from \cref{pro:4-EstimateOfPenalizationEq} that
for each $n\geq 1$ and $t\in\T$, $\underline Y^n_t\leq Y^n_t\leq \bar Y^n_t$, ${\rm d}K^n\leq {\rm d}\underline K^n$ and ${\rm d}A^n\leq {\rm d}\bar A^n$. Then, \eqref{eq:4-ConvergenceOfUnderlineYnZnKnAn}  and \eqref{eq:4-ConvergenceOfBarYnZnKnAn} yield that
\begin{equation}
\label{eq:4-4.30}
\lim\limits_{n\To \infty} \| Y_\cdot^n- Y_\cdot\|_{\s^p}=0.
\end{equation}
Now, we show the convergence of the sequence $\{Z_\cdot^n\}$ in $\M^p$. Indeed, for each $n\geq 1$, observe that
$$
\begin{array}{lll}
\Dis (\bar Y_\cdot,\bar Z_\cdot,\bar V_\cdot)&
:=&\Dis (Y_\cdot^n-Y_\cdot,Z_\cdot^n-Z_\cdot,\\
&&\Dis \ \ \int_0^\cdot \left(g(s,Y_s^n,Z_s^n)-g(s,Y_s,Z_s)\right){\rm d}s+\left(K_\cdot^n-K_\cdot\right)
-\left(A_\cdot^n-A_\cdot\right))
\end{array}
$$
solves equation \eqref{eq:4-BarY=BarV}. It follows from (i) of \cref{lem:2-Lemma1} with $t=0$ and $\tau=T$ that there exists a constant $C'>0$ such that for each $n\geq 1$,
$$
\begin{array}{lll}
\Dis \|Z_\cdot^n-Z_\cdot\|_{\M^p}^p
&\leq & \Dis C'\E\left[\sup\limits_{t\in [0,T]}|Y_t^n-Y_t|^p+\sup\limits_{t\in [0,T]}\left[\left(\int_t^T (Y_s^n-Y_s)\left({\rm d}K_s^n-{\rm d}K_s\right)\right)^+\right]^{p\over 2}\right] \\
&&\Dis +C'\E\left[\sup\limits_{t\in [0,T]}\left[\left(\int_t^T (Y_s^n-Y_s)\left({\rm d}A_s-{\rm d}A_s^n\right)\right)^+\right]^{p\over 2}\right]\vspace{0.1cm}\\
&&\Dis + C'\E\left[\left(\int_{0}^T|Y^n_s-Y_s|
\left|g(s,Y_s^n,Z_s^n)-g(s,Y_s,Z_s)\right| {\rm d}s\right)^{p\over 2}\right].
\end{array}
$$
It then follows from the definitions of $K^n_\cdot$ and $A^n_\cdot$ as well as the fact of $L_\cdot\leq Y_\cdot\leq U_\cdot$ together with H\"{o}lder's inequality that
\begin{equation}
\label{eq:4-ConvergenceOfZn}
\begin{array}{lll}
\Dis\|Z_\cdot^n-Z_\cdot\|_{\M^p}^p
&\leq & \Dis C'\|Y_\cdot^n-Y_\cdot\|_{\s^p}^p+ C'\|Y_\cdot^n-Y_\cdot\|_{\s^p}^{p\over 2}\left\{\left(\E\left[|K_T|^p\right]\right)^{1\over 2}+\left(\E\left[|A_T|^p\right]\right)^{1\over 2}\right\}\\
&&\Dis +C'\|Y_\cdot^n-Y_\cdot\|_{\s^p}
^{p\over 2} \left(\E\left[\left(\int_{0}^T
\left(|g(t,Y_t^n,Z_t^n)|+|g(t,Y_t,Z_t)|\right) {\rm d}t\right)^{p}\right]\right)^{1\over 2}.
\end{array}
\end{equation}
Thus, in view of \eqref{eq:4-ConvergenceOfZn}, \eqref{eq:4-4.30}, \eqref{eq:3-BoundPenalizationForBSDE} together with \cref{pro:2-Pro1}, it follows that
\begin{equation}
\label{eq:4-4.32}
\lim\limits_{n\To \infty} \| Z_\cdot^n- Z_\cdot\|_{\M^p}=0.
\end{equation}
Furthermore, in view of \eqref{eq:4-4.30} and \eqref{eq:4-4.32}, by a similar argument to \eqref{eq:4-ConvergenceOfgUnderlineYnZn} we deduce that
\begin{equation}
\label{eq:4-4.33}
\lim\limits_{n\To\infty}\left\| \int_0^\cdot g(s,Y_s^n,Z_s^n){\rm d}s-\int_0^\cdot g(s,Y_s,Z_s){\rm d}s\right\|_{\s^p}=0.
\end{equation}
Finally, \eqref{eq:4-ConvergenceOfYnZnKnAn} follows from \eqref{eq:4-4.30}, \eqref{eq:4-4.32} and \eqref{eq:4-4.33}. \cref{thm:4-ExistenceUnder(H2)} is then proved.\vspace{0.2cm}
\end{proof}

By \cref{thm:4-ExistenceUnder(H2)} we can prove the following corollary, which together with \cref{pro:3-ComparisonTheoremofDRBSDE}, in view of Remarks \ref{rmk:2-ConnectionOfBSDEandRBSDEandDRBSDE} and \ref{rmk:2-ConnectionOf(HH)(AA)(H4)(H4L)}, strengthens \cref{pro:2-ComparisonOfUniquenessSolution}.

\begin{cor}\label{cor:4-ComparisonOfdKanddA}
Let $p>1$, $V_\cdot^1, V_\cdot^2,\in\vcal^{p}$ and both $g^1$ and $g^2$ satisfy assumptions \ref{A:(H1)}, \ref{A:(H2)} and \ref{A:(H3)}. For $i=1,2$, assume that \ref{A:(H4)} holds for $\xi^i$, $L_\cdot^i$, $U_\cdot^i$ and $X_\cdot^i$ associated with $g^i$, and that $(Y_\cdot^i,Z_\cdot^i,K_\cdot^i,A_\cdot^i)$ is the unique solution of DRBSDE $(\xi^i,g^i+{\rm d}V^i,L^i,U^i)$ in $\s^p\times \M^p\times \vcal^{+,p}\times \vcal^{+,p}$. If $\xi^1\leq \xi^2$, ${\rm d}V^1\leq {\rm d}V^2$, $L^1_\cdot=L^2_\cdot$, $U^1_\cdot=U^2_\cdot$ and
$$\as ,\ \ g^1(t,y,z)\leq g^2(t,y,z)\vspace{0.1cm}$$
for each $(y,z)\in \R\times \R^d$, then ${\rm d}K^1\geq {\rm d}K^2$ and ${\rm d}A^1\leq {\rm d}A^2$.
\end{cor}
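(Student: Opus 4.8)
The plan is to approximate both doubly reflected BSDEs by the single-barrier penalization schemes of \cref{thm:4-ExistenceUnder(H2)}, to compare the genuine reflecting processes at the penalized level via \cref{pro:2-ComparisonOfUniquenessSolution}, and then to pass to the limit. The guiding observation is that in each scheme exactly one of the two barriers is retained as a genuine Skorokhod reflection, so that the reflecting process attached to it is directly comparable by the single-barrier theory; the other barrier, being only penalized, does not produce a reflecting measure to be controlled.

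To obtain ${\rm d}K^1\geq {\rm d}K^2$ I would use scheme \eqref{eq:3-PenalizationForRBSDEwithLowerBarrier}, in which the lower barrier is reflected and the upper one is penalized. For $i=1,2$ and $n\geq 1$, let $(\bar Y^{n,i}_\cdot,\bar Z^{n,i}_\cdot,\bar K^{n,i}_\cdot)$ be the unique solution of the $\underline{R}$BSDE $(\xi^i,\underline g^i_n+{\rm d}V^i,L^i)$ with $\underline g^i_n(t,y,z):=g^i(t,y,z)-n(y-U^i_t)^+$, so that $\bar K^{n,i}_\cdot$ is the genuine reflecting process at the common lower barrier $L^1_\cdot=L^2_\cdot$. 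First I would note that each $\underline g^i_n$ again satisfies \ref{A:(H1)}, \ref{A:(H2)} and \ref{A:(H3)}, and that \ref{A:(H4L)} holds for $(\xi^i,L^i,X^i)$ associated with $\underline g^i_n$: indeed $x\mapsto -n(x-U^i_t)^+$ is non-increasing and vanishes on $\{x\leq U^i_t\}$, so that $\underline g^i_n(\cdot,X^i_\cdot,0)=g^i(\cdot,X^i_\cdot,0)\in\hcal^p$ because $X^i_\cdot\leq U^i_\cdot$, exactly as in the construction underlying \cref{thm:4-ExistenceUnder(H2)}. Since $g^1\leq g^2$ and $U^1_\cdot=U^2_\cdot$ give $\underline g^1_n\leq \underline g^2_n$ for every $n$, the final assertion of (ii) in \cref{pro:2-ComparisonOfUniquenessSolution} (using also $\xi^1\leq\xi^2$, ${\rm d}V^1\leq {\rm d}V^2$ and $L^1_\cdot=L^2_\cdot$) yields ${\rm d}\bar K^{n,1}\geq {\rm d}\bar K^{n,2}$, i.e. $\bar K^{n,1}_t-\bar K^{n,1}_s\geq \bar K^{n,2}_t-\bar K^{n,2}_s$ for all $0\leq s\leq t\leq T$ and every $n$. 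Since $\bar K^{n,i}_\cdot\to K^i_\cdot$ in $\s^p$ by (ii) of \cref{thm:4-ExistenceUnder(H2)}, letting $n\To\infty$ would give $K^1_t-K^1_s\geq K^2_t-K^2_s$, that is ${\rm d}K^1\geq {\rm d}K^2$ in the sense of \cref{dfn:2-Singular}.

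The inequality ${\rm d}A^1\leq {\rm d}A^2$ I would obtain symmetrically from scheme \eqref{eq:3-PenalizationForRBSDEwithSuperBarrier}, in which the upper barrier is reflected and the lower one is penalized. Writing $(\underline Y^{n,i}_\cdot,\underline Z^{n,i}_\cdot,\underline A^{n,i}_\cdot)$ for the unique solution of the $\bar{R}$BSDE $(\xi^i,\bar g^i_n+{\rm d}V^i,U^i)$ with $\bar g^i_n(t,y,z):=g^i(t,y,z)+n(y-L^i_t)^-$, the process $\underline A^{n,i}_\cdot$ is the genuine reflecting process at the common upper barrier $U^1_\cdot=U^2_\cdot$, each $\bar g^i_n$ satisfies \ref{A:(H1)}, \ref{A:(H2)}, \ref{A:(H3)} and the corresponding \ref{A:(H4U)}, and $g^1\leq g^2$ with $L^1_\cdot=L^2_\cdot$ forces $\bar g^1_n\leq\bar g^2_n$. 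The final assertion of (iii) in \cref{pro:2-ComparisonOfUniquenessSolution} then gives ${\rm d}\underline A^{n,1}\leq {\rm d}\underline A^{n,2}$ for every $n$, and since $\underline A^{n,i}_\cdot\to A^i_\cdot$ in $\s^p$ by (i) of \cref{thm:4-ExistenceUnder(H2)}, the same limit passage produces ${\rm d}A^1\leq {\rm d}A^2$.

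The step I expect to be the main obstacle is the two limit passages. By \cref{dfn:2-Singular}, the comparisons ${\rm d}\bar K^{n,1}\geq {\rm d}\bar K^{n,2}$ and ${\rm d}\underline A^{n,1}\leq {\rm d}\underline A^{n,2}$ are pathwise inequalities between increments, and these must be shown to survive the $\s^p$-convergence of \cref{thm:4-ExistenceUnder(H2)}. This should be manageable: $\s^p$-convergence provides, along a subsequence, $\mathbb{P}$-a.s. uniform convergence of the reflecting processes, so the countably many increment inequalities over rational $s\leq t$ pass to the limit and then extend to all $0\leq s\leq t\leq T$ by continuity of $K^i_\cdot$ and $A^i_\cdot$. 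The remaining task, verifying that the penalized generators inherit \ref{A:(H1)}--\ref{A:(H3)} together with \ref{A:(H4L)} or \ref{A:(H4U)}, is routine and is already implicit in the construction of these schemes within \cref{thm:4-ExistenceUnder(H2)}.
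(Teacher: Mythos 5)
Your proof is correct, but it follows a genuinely different route through the paper's toolbox than the paper's own argument. The paper works with the fully penalized, non-reflected scheme (iii) of \cref{thm:4-ExistenceUnder(H2)}: for each $n$ it compares the solutions $Y^{1,n}_\cdot\leq Y^{2,n}_\cdot$ of the two penalized BSDEs by the plain BSDE comparison, (i) of \cref{pro:2-ComparisonOfUniquenessSolution}, and then obtains the measure comparison essentially for free from the explicit formulas $K^{i,n}_t=n\int_0^t(Y^{i,n}_s-L_s)^-{\rm d}s$ and $A^{i,n}_t=n\int_0^t(Y^{i,n}_s-U_s)^+{\rm d}s$: since the densities are ordered pointwise, $\E\left[\int_0^T\mathbbm{1}_D\,{\rm d}K^{1,n}_t\right]\geq \E\left[\int_0^T\mathbbm{1}_D\,{\rm d}K^{2,n}_t\right]$ holds for every progressively measurable $D$ at once, and a single limit passage via (iii) of \cref{thm:4-ExistenceUnder(H2)} treats $K$ and $A$ simultaneously. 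You instead run the two single-barrier schemes \eqref{eq:3-PenalizationForRBSDEwithLowerBarrier} and \eqref{eq:3-PenalizationForRBSDEwithSuperBarrier}, keep one barrier genuinely reflected in each, and invoke the ``moreover'' clauses of (ii) and (iii) of \cref{pro:2-ComparisonOfUniquenessSolution} to compare the genuine reflecting processes; this obliges you to re-verify \ref{A:(H1)}--\ref{A:(H3)} and \ref{A:(H4L)}/\ref{A:(H4U)} for the penalized generators (which you do correctly: the penalty term vanishes along $X^i$ because $L^i_\cdot\leq X^i_\cdot\leq U^i_\cdot$, and it is monotone in $y$ and independent of $z$), and it costs you two separate limit passages instead of one. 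What each approach buys: the paper's choice makes the comparison of the reflecting terms trivial (ordered densities, no appeal to the heavier RBSDE comparison clauses), while your choice reuses an already-established measure comparison and, notably, your limit passage is spelled out more carefully than the paper's. The paper passes $\E\left[\int_0^T\mathbbm{1}_D\,{\rm d}K^{i,n}_t\right]$ to the limit for a fixed general $D$, a step that does not follow from $\s^p$-convergence alone (setwise convergence of measures is stronger than uniform convergence of the distribution-like processes); the clean justification is exactly the one you give, namely converting to increment inequalities as in \cref{dfn:2-Singular}, passing those to the limit along an a.s. uniformly convergent subsequence over rational $s\leq t$, and extending by continuity.
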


\begin{proof}
For each $n\geq 1$ and $i=1,2$, let $(Y^{i,n}_\cdot, Z^{i,n}_\cdot, A^{i,n}_\cdot)\in \s^p\times\M^p$ be the unique solution of $\bar R$BSDE $(\xi^i,\bar g_n^i+{\rm d}V^i,U^i)$ with $\bar g_n^i(t,y,z):=g^i(t,y,z)+n(y-L_t^i)^-$.
In view of assumptions of \cref{cor:4-ComparisonOfdKanddA}, it follows from (iii) of \cref{pro:2-ComparisonOfUniquenessSolution} that for each $n\geq 1$, $Y^{1,n}_\cdot\leq Y^{2,n}_\cdot$ and ${\rm d}A^{1,n}\leq {\rm d}A^{2,n}$, and then for each progressively measurable set $D\subset \Omega\times\T$ and each $n\geq 1$, we have
$$
\E\left[\int_0^T \mathbbm{1}_{D}{\rm d}K^{1,n}_t\right]:=n\E\left[\int_0^T \mathbbm{1}_{D}(Y^{1,n}_t-L^1_t)^-{\rm d}t\right]\geq n\E\left[\int_0^T \mathbbm{1}_{D}(Y^{2,n}_t-L^2_t)^-{\rm d}t\right]=:\E\left[\int_0^T \mathbbm{1}_{D}{\rm d}K^{2,n}_t\right]
$$
and
$$
\E\left[\int_0^T \mathbbm{1}_{D}{\rm d}A^{1,n}_t\right]\leq \E\left[\int_0^T \mathbbm{1}_{D}{\rm d}A^{2,n}_t\right].
$$
Since
$$
\|K_\cdot^{1,n}-K_\cdot^1\|_{\s^p}
+\|K_\cdot^{2,n}-K_\cdot^2\|_{\s^p}\To 0
$$
and
$$
\|A_\cdot^{1,n}-A_\cdot^1\|_{\s^p}
+\|A_\cdot^{2,n}-A_\cdot^2\|_{\s^p}\To 0
$$
as $n\To \infty$ by (i) of \cref{thm:4-ExistenceUnder(H2)}, it follows that
$$
\E\left[\int_0^T \mathbbm{1}_D{\rm d}K^1_t\right]\geq \E\left[\int_0^T \mathbbm{1}_D{\rm d}K^2_t\right]\ \ {\rm and}\ \ \E\left[\int_0^T \mathbbm{1}_D{\rm d}A^1_t\right]\leq \E\left[\int_0^T \mathbbm{1}_D{\rm d}A^2_t\right],
$$
which is the desired result.
\end{proof}

At the end of this subsection, we put forward and prove a general existence result of the $L^p$ solutions for DRBSDEs under assumptions \ref{A:(H1)}, \ref{A:(H2')}, \ref{A:(H3)} and \ref{A:(H4)}.

\begin{thm}
\label{thm:4-ExistenceUnder(H2')}
Assume that $p>1$, $V_\cdot\in\vcal^p$, the generator $g$ satisfies assumptions \ref{A:(H1)}, \ref{A:(H2')} and \ref{A:(H3)}, and assumption \ref{A:(H4)} holds for $L_\cdot,U_\cdot,\xi$ and $X_\cdot$

\begin{itemize}
\item [(i)] For each $n\geq 1$, let $(\underline Y^n_\cdot,\underline Z^n_\cdot,\underline A^n_\cdot)$ be the minimal (resp.maximal) solution of $\bar{R}$BSDE $(\xi,\bar g_n+{\rm d}V,U)$ in $\s^p\times\M^p\times \vcal^{+,p}$ with $\bar g_n(t,y,z):=g(t,y,z)+n(y-L_t)^-$, i.e., \eqref{eq:3-PenalizationForRBSDEwithSuperBarrier}, recalling \cref{pro:4-EstimateOfPenalizationEq} and \cref{rmk:4-RemarkOfKeyEstimate}.
Then, DRBSDE $(\xi,g+{\rm d}V,L,U)$ admits a minimal solution (resp. a solution) $(\underline Y_\cdot,\underline Z_\cdot,\underline K_\cdot,\underline A_\cdot)$ in the space $\s^p\times \M^p\times \vcal^{+,p}\times \vcal^{+,p}$ such that
$$
\lim\limits_{n\To \infty}\left(\|\underline Y_\cdot^n- \underline Y_\cdot\|_{\s^p}+\|\underline Z_\cdot^n-\underline  Z_\cdot\|_{\M^p}+\|\underline A_\cdot^n-\underline A_\cdot \|_{\s^p}\right)=0,
$$
and there exists a subsequence $\{\underline K_\cdot^{n_j}\}$ of $\{\underline K_\cdot^n\}$ such that
$$
\lim\limits_{j\To\infty}\sup\limits_{t\in\T}
|\underline K_t^{n_j}-\underline K_t|=0.
$$

\item [(ii)] For each $n\geq 1$, let $(\bar Y^n_\cdot,\bar Z^n_\cdot,\bar K^n_\cdot)$ be the maximal (resp. minimal) solution of $\underline{R}$BSDE $(\xi,\underline g_n+{\rm d}V,L)$ in $\s^p\times\M^p\times \vcal^{+,p}$ with $\underline g_n(t,y,z):=g(t,y,z)-n(y-U_t)^+$, i.e., \eqref{eq:3-PenalizationForRBSDEwithLowerBarrier}, recalling \cref{pro:4-EstimateOfPenalizationEq} and \cref{rmk:4-RemarkOfKeyEstimate}.
Then, DRBSDE $(\xi,g+{\rm d}V,L,U)$ admits a maximal solution (resp. a solution) $(\bar Y_\cdot,\bar Z_\cdot,\bar K_\cdot,\bar A_\cdot)$ in the space $\s^p\times \M^p\times \vcal^{+,p}\times \vcal^{+,p}$ such that
$$
\lim\limits_{n\To \infty}\left(\|\bar Y_\cdot^n- \bar Y_\cdot\|_{\s^p}+\|\bar Z_\cdot^n-\bar  Z_\cdot\|_{\M^p}+\|\bar K_\cdot^n-\bar K_\cdot \|_{\s^p}\right)=0,
$$
and there exists a subsequence $\{\bar A_\cdot^{n_j}\}$ of $\{\bar A_\cdot^n\}$ such that
$$
\lim\limits_{j\To\infty}\sup\limits_{t\in\T}
|\bar A_t^{n_j}-\bar A_t|=0.
$$
\end{itemize}
\end{thm}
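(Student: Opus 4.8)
The plan is to obtain existence together with the stated convergences as an almost immediate consequence of the uniform estimates of \cref{pro:3-EstimateOfPenalizationEq}---taken in the form valid under \ref{A:(H2')} and for minimal/maximal solutions recorded in \cref{rmk:3-RemarkOfKeyEstimate}---fed into the penalization convergence result \cref{pro:4-Penalization}, and then to establish the minimality assertion separately by a comparison argument in which the reflecting process $K'$ of an arbitrary solution is reabsorbed into the driver. I would treat (i) with the minimal penalized solutions in detail; the ``maximal'' variant and (ii) are entirely symmetric.

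First I would note that under \ref{A:(H1)}, \ref{A:(H2')} and \ref{A:(H3)} the generator satisfies \ref{A:(HH)} by (i) of \cref{rmk:2-ConnectionOf(HH)(AA)(H4)(H4L)}, that (i) of \ref{A:(H4)} holds, and that $V_\cdot\in\vcal^p$, so that the structural hypotheses of \cref{pro:4-Penalization} are in force. Applying \cref{rmk:3-RemarkOfKeyEstimate} to the minimal solutions $(\underline Y^n_\cdot,\underline Z^n_\cdot,\underline A^n_\cdot)$ of \eqref{eq:3-PenalizationForRBSDEwithSuperBarrier} yields that $\underline Y^n_\cdot$ increases in $n$, that ${\rm d}\underline A^n\leq {\rm d}\underline A^{n+1}$, and that \eqref{eq:3-BoundPenalizationForRBSDESuperBarrier} holds with a common $\underline\eta\in L^1(\F_T)$; these are precisely the hypotheses \eqref{eq:4-BoundPenalizationForRBSDESuperBarrier} demanded by (i) of \cref{pro:4-Penalization}. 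Invoking it produces a quadruple $(\underline Y_\cdot,\underline Z_\cdot,\underline K_\cdot,\underline A_\cdot)\in\s^p\times\M^p\times\vcal^{+,p}\times\vcal^{+,p}$ solving DRBSDE $(\xi,g+{\rm d}V,L,U)$, the convergence of $\underline Y^n,\underline Z^n,\underline A^n$ in $\s^p$, $\M^p$ and $\s^p$ respectively, and the uniform convergence along a subsequence of $\underline K^n$. For the parenthetical case one runs the identical argument with the maximal penalized solutions, whose monotonicity in $n$ is again supplied by \cref{rmk:3-RemarkOfKeyEstimate}; this delivers a solution of the DRBSDE but forfeits the minimality drawn below.

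The crux is minimality. Let $(Y'_\cdot,Z'_\cdot,K'_\cdot,A'_\cdot)$ be an arbitrary solution of DRBSDE $(\xi,g+{\rm d}V,L,U)$ in $\s^p\times\M^p\times\vcal^{+,p}\times\vcal^{+,p}$; since $\underline Y^n_t\uparrow\underline Y_t$ it suffices to prove $\underline Y^n_t\leq Y'_t$ for every $n$ and $t\in\T$. The key observation is that, because $L_t\leq Y'_t$, the penalization term vanishes along $Y'$, i.e. $n(Y'_t-L_t)^-=0$, whence $\bar g_n(t,Y'_t,Z'_t)=g(t,Y'_t,Z'_t)$; consequently $(Y'_\cdot,Z'_\cdot,A'_\cdot)$ is a solution of $\bar{R}$BSDE $(\xi,\bar g_n+{\rm d}(V+K'),U)$, the reflecting process $K'$ having been moved into the driver. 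One checks that $\bar g_n$ inherits \ref{A:(H1)}, \ref{A:(H2')} and \ref{A:(H3)} from $g$ (the added term $n(y-L_t)^-$ is nonincreasing in $y$ and independent of $z$), and that \ref{A:(H4U)} holds for both $\bar{R}$BSDE $(\xi,\bar g_n+{\rm d}V,U)$ and $\bar{R}$BSDE $(\xi,\bar g_n+{\rm d}(V+K'),U)$ with the process $X_\cdot$ of \ref{A:(H4)}, since $L_\cdot\leq X_\cdot\leq U_\cdot$ forces $\bar g_n(\cdot,X_\cdot,0)=g(\cdot,X_\cdot,0)\in\hcal^p$. As $K'_\cdot\in\vcal^{+,p}$ gives ${\rm d}V\leq {\rm d}(V+K')$ while the two equations share the same generator $\bar g_n$, barrier $U$ and terminal value $\xi$, the comparison theorem (iii) of \cref{pro:2-ComparisonOfUniquenessSolution}, in its minimal-solution form furnished by \cref{rmk:2-ExistenceAndComparisonOfMaximalSolution} via \eqref{eq:2-(2.8)}, shows that the minimal solution $\underline Y^n_\cdot$ of $\bar{R}$BSDE $(\xi,\bar g_n+{\rm d}V,U)$ is dominated by the minimal solution of $\bar{R}$BSDE $(\xi,\bar g_n+{\rm d}(V+K'),U)$, which in turn does not exceed $Y'_\cdot$ because $Y'$ is one of its solutions. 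Letting $n\to\infty$ gives $\underline Y_t\leq Y'_t$, so $(\underline Y_\cdot,\underline Z_\cdot,\underline K_\cdot,\underline A_\cdot)$ is a minimal solution in the sense of \cref{dfn:2-MaximalandMinimalSolution}.

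I expect the main obstacle to be this last comparison step: one must correctly reabsorb $K'$ into the driver so that the two $\bar{R}$BSDEs differ only through their ordered drivers, verify that $\bar g_n$ retains every structural assumption, and apply the comparison theorem in its minimal-solution form, where the pointwise inequality \eqref{eq:2-(2.8)} between generators is the relevant hypothesis (here holding with equality since both carry $\bar g_n$). Statement (ii) then follows by the symmetric penalization of the upper barrier, or equivalently by applying (i) to $(-Y_\cdot,-Z_\cdot,K_\cdot)$ through (iv) of \cref{rmk:2-ConnectionOfBSDEandRBSDEandDRBSDE}.
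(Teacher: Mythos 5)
Your proposal is correct and takes essentially the same route as the paper's proof: existence and the stated convergences follow from \cref{pro:3-EstimateOfPenalizationEq} in the minimal/maximal-solution form of \cref{rmk:3-RemarkOfKeyEstimate} fed into \cref{pro:4-Penalization} (mirroring the argument of \cref{thm:4-ExistenceUnder(H2)}), and minimality is obtained exactly as in the paper by regarding an arbitrary solution $(Y'_\cdot,Z'_\cdot,K'_\cdot,A'_\cdot)$ of the DRBSDE as a solution of $\bar{R}$BSDE $(\xi,\bar g_n+{\rm d}(V+K'),U)$ and applying (iii) of \cref{pro:2-ComparisonOfUniquenessSolution} together with \cref{rmk:2-ExistenceAndComparisonOfMaximalSolution}, then letting $n\To\infty$. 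The only difference is presentational: you spell out the intermediate comparison with the minimal solution of $\bar{R}$BSDE $(\xi,\bar g_n+{\rm d}(V+K'),U)$ and the verification that $\bar g_n$ inherits \ref{A:(H1)}, \ref{A:(H2')}, \ref{A:(H3)} and \ref{A:(H4U)}, steps the paper leaves implicit.
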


\begin{proof}
We only prove (i), and (ii) can be proved in the same way.\vspace{0.2cm}

In view of Remarks \ref{rmk:2-ExistenceAndComparisonOfMaximalSolution} and \ref{rmk:2-ConnectionOf(HH)(AA)(H4)(H4L)}, using Propositions \ref{pro:4-Penalization}-\ref{pro:4-EstimateOfPenalizationEq} and \cref{rmk:4-RemarkOfKeyEstimate}, we can prove that all the conclusions in (i) of \cref{thm:4-ExistenceUnder(H2')} hold expect the minimal property of the solution $(\underline Y_\cdot,\underline Z_\cdot,\underline K_\cdot,\underline A_\cdot)$ of DRBSDE $(\xi,g+{\rm d}V,L,U)$ in $\s^p\times \M^p\times \vcal^{+,p}\times \vcal^{+,p}$ when $(\underline Y^n_\cdot,\underline Z^n_\cdot,\underline A^n_\cdot)$ is the minimal solution of $\bar{R}$BSDE $(\xi,\bar g_n+{\rm d}V,U)$ in $\s^p\times\M^p\times \vcal^{+,p}$ for each $n\geq 1$. Now, we show this property.\vspace{0.2cm}

Indeed, for any solution $(Y_\cdot, Z_\cdot, K_\cdot, A_\cdot)$ of DRBSDE $(\xi,g+{\rm d}V,L,U)$ in $\s^p\times \M^p\times \vcal^{+,p}\times \vcal^{+,p}$, it is not difficult to check that $(Y_\cdot, Z_\cdot, A_\cdot)$ is a solution of $\bar{R}$BSDE $(\xi,\bar g_n+{\rm d}\bar V,U)$ in $\s^p\times \M^p\times \vcal^{+,p}$ with $\bar V_\cdot:=V_\cdot+K_\cdot$ for each $n\geq 1$. Thus, in view of the assumption that $(\underline Y^n_\cdot,\underline Z^n_\cdot,\underline A^n_\cdot)$ is the minimal solution of $\bar{R}$BSDE $(\xi,\bar g_n+{\rm d}V,U)$ in $\s^p\times\M^p\times \vcal^{+,p}$ for each $n\geq 1$, using (iii) of \cref{pro:2-ComparisonOfUniquenessSolution} together with \cref{rmk:2-ExistenceAndComparisonOfMaximalSolution}
yields that for each $n\geq 1$,
$$\underline Y^n_t\leq Y_t,\ \ t\in\T.$$
Furthermore, since $\lim\limits_{n\To \infty}\|\underline Y_\cdot^n- \underline Y_\cdot\|_{\s^p}=0$, we have
$$\underline Y_t\leq Y_t,\ \ t\in\T,$$
which is the desired result.\vspace{0.2cm}
\end{proof}

\subsection{Examples and remarks\vspace{0.2cm}}

We first introduce several examples which Theorems \ref{thm:4-ExistenceUnder(H2)} and \ref{thm:4-ExistenceUnder(H2')} can be applied to. However, to the best of our knowledge, all of their conclusions can not be obtained by any existing results.
In these examples, we always assume that $p>1$, $V_\cdot\in\vcal^p$, and \ref{A:(H4)} holds for $L_\cdot,U_\cdot,\xi$ and $X_\cdot$

\begin{ex}\label{ex:4-1}
Let the generator $$g(\omega,t,y,z)=y-y^3+|z|+\sqrt{|z|}.$$ Clearly, this $g$ satisfies assumptions \ref{A:(H1s)} with $\mu=1$, \ref{A:(H2)} with $\phi(x)=x+\sqrt{x}$, and \ref{A:(H3)}, but does not satisfy assumption \ref{A:(H2s)}. Then, in view of \cref{rmk:2-(H1s)Stronger(H1)}, it then follows from \cref{thm:4-ExistenceUnder(H2)} that DRBSDE $(\xi,g+{\rm d}V,L,U)$ admits a unique solution in $\s^p\times \M^p\times \vcal^{+,p}\times \vcal^{+,p}$.
\end{ex}

\begin{ex}\label{ex:4-2}
Let the generator
$$
g(\omega,t,y,z)=h(|y|)+e^{-y|B_t(\omega)|^2}+\cos|z|-\sqrt[3]{|z|}+{1\over \sqrt{t}}\mathbbm{1}_{t>0},
$$
where, with $\delta>0$ small enough,
$$h(x)=\left\{
\begin{array}{lll}
x|\ln x|& ,&0<x\leq \delta;\\
h'(\delta-)(x-\delta)+h(\delta)& ,&x> \delta;\\
0& ,&{\rm other\ cases}.
\end{array}\right.\vspace{0.1cm}$$
It is not very hard to verify that this $g$ satisfies assumptions \ref{A:(H1)} with $\rho(\cdot)=h(\cdot)$, \ref{A:(H2)} with $\phi(x)=x+\sqrt[3]{x}$, and \ref{A:(H3)}, but does not satisfy assumptions \ref{A:(H1s)} and \ref{A:(H2s)}. It then follows from \cref{thm:4-ExistenceUnder(H2)} that DRBSDE $(\xi,g+{\rm d}V,L,U)$ admits a unique solution in $\s^p\times \M^p\times \vcal^{+,p}\times \vcal^{+,p}$.
\end{ex}

\begin{ex}\label{ex:4-3}
Let the generator $$g(\omega,t,y,z)=-e^{y}+|z|\sin|z|.$$ It is not hard to check that this $g$ satisfies assumptions \ref{A:(H1s)} with $\mu=0$, \ref{A:(H2')} with $f_\cdot\equiv 0$, $\mu=0$ and $\lambda=1$, and \ref{A:(H3)}, but does not satisfy \ref{A:(H2)} (ii). Then, in view of \cref{rmk:2-(H1s)Stronger(H1)}, it follows from \cref{thm:4-ExistenceUnder(H2')} that DRBSDE $(\xi,g+{\rm d}V,L,U)$ admits a minimal and a maximal solutions in $\s^p\times \M^p\times \vcal^{+,p}\times \vcal^{+,p}$.\vspace{0.2cm}
\end{ex}

\begin{rmk}
\label{rmk:4-1}
With respect to this section, we would like to mention the following things.

\begin{itemize}

\item [1)] Compared with that in Proposition 4.1 of \citet{Fan2017AMS}, the assumption \eqref{eq:4-ZnKnAnLeqEta} in our \cref{pro:4-Penalization} is weaker and more natural, although some ideas of the proof are borrowed from there. And, in view of \cref{rmk:2-ConnectionOfBSDEandRBSDEandDRBSDE}, \cref{pro:4-Penalization} includes Proposition 4.1 in \citet{Fan2017AMS} as its particular case.

\item [2)] \cref{thm:4-ExistenceUnder(H2)} strengthens (v) of Theorem 6.5 in \citet{Klimsiak2013BSM}, where the generator $g$ needs to satisfy stronger assumptions \ref{A:(H1s)} and \ref{A:(H2s)} than assumptions \ref{A:(H1)} and \ref{A:(H2)} in \cref{thm:4-ExistenceUnder(H2)}. And, in view of \cref{rmk:2-ConnectionOfBSDEandRBSDEandDRBSDE}, Theorems \ref{thm:4-ExistenceUnder(H2)} and \ref{thm:4-ExistenceUnder(H2')} include respectively Theorems 5.8 and 5.11 in \citet{Fan2017AMS} as its particular case.

\item [3)] \cref{pro:4-EstimateOfPenalizationEq} is a key and difficult step to verify Theorems \ref{thm:4-ExistenceUnder(H2)} and \ref{thm:4-ExistenceUnder(H2')}. In order to prove the conclusions of \cref{pro:4-EstimateOfPenalizationEq} under general assumptions \ref{A:(H1)}-\ref{A:(H4)}, several auxiliary BSDEs need to be introduced, Propositions \ref{pro:2-Pro1}-\ref{pro:2-ComparisonOfUniquenessSolution} need to be comprehensively and repeatedly applied, and Remarks \ref{rmk:2-ComparisonWithoutIntegrableCondition} and
    \ref{rmk:2-ConnectionOf(HH)(AA)(H4)(H4L)} need to be always kept in mind.

\item [4)] In (iii) of \cref{thm:4-ExistenceUnder(H2)}, it is not clear whether the sequences of processes $\{K^n_\cdot\}$ and $\{A^n_\cdot\}$ converge respectively to $K_\cdot$ and $A_\cdot$ as $n$ tends to infinity.

\item [5)] Under the assumptions of \cref{thm:4-ExistenceUnder(H2')}, we do not know whether the minimal (resp. maximal) solution of DRBSDE $(\xi,g+{\rm d}V,L,U)$ in $\s^p\times\M^p\times \vcal^{+,p}$ can be approximated by a sequence of solutions of $\underline R$BSDEs with lower barrier $L$ (resp. $\bar{R}$BSDEs with upper barrier $U$). In particular, under the same assumptions we also do not know whether a solution of DRBSDE $(\xi,g+{\rm d}V,L,U)$ in $\s^p\times\M^p\times \vcal^{+,p}$ can be approximated by a sequence of solutions of BSDEs.
\end{itemize}
\end{rmk}

\section{Existence of the minimal (maximal) $L^p$ solution: approximation method}
\label{sec:5-Stability}
\setcounter{equation}{0}

In this section, we will establish a general approximation result for the $L^p\ (p>1)$ solutions of DRBSDEs under some elementary conditions, and study existence of the $L^p$ solution of DRBSDEs where the generator $g$ may be discontinuous and have a general growth in $y$.

\subsection{Convergence of the sequence of $L^p$ solutions for the approximation DRBSDEs\vspace{0.2cm}}

The following proposition is a general approximation result for the $L^p$ solutions of DRBSDEs under some elementary conditions.

\begin{pro} [Approximation method] \label{pro:5-Approximation}
Let $p>1$, $V_\cdot\in \vcal^p$ and \ref{A:(H4)}(i) hold for $L_\cdot$, $U_\cdot$ and $\xi$. Assume that for each $n\geq 1$, the generator $g_n$ satisfies (ii) of \ref{A:(HH)} with the same $f_\cdot, \psi_\cdot(r)$ and $\lambda$, and $(Y_\cdot^n,Z_\cdot^n,K_\cdot^n,A_\cdot^n)$ is a solution of DRBSDE $(\xi,g_n+{\rm d}V,L,U)$ in $\s^p\times\M^p\times\vcal^{+,p}\times\vcal^{+,p}$. If $Y_\cdot^n\leq Y_\cdot^{n+1}\leq \bar Y_\cdot$, ${\rm d}A^n\leq {\rm d}A^{n+1}$ and ${\rm d}K^n\geq {\rm d}K^{n+1}$ for each $n\geq 1$ and a process $\bar Y_\cdot\in \s^p$, $g_n$ tends locally uniformly in $(y,z)$ to the generator $g$ as $n\To\infty$ in the following sense:
\begin{equation}
\label{eq:5-LocalUniformConvergenceofgnFromleft}
\begin{array}{l}
{\rm For\ any\ sequence}\ \ \{(y^n,z^n)\}_{n=1}^\infty\ \ {\rm in}\ \ \R\times\R^d\ \ {\rm such\ that}\ \ \Lim(|y^n-y|+|z^n-z|)=0,\\
{\rm if}\ \ y^n\leq y\ \ {\rm for\ each}\ n\geq 1,\ \ {\rm then}\ \
\Lim g_n(t,y^n,z^n)=g(t,y,z)\ \ \as,
\end{array}
\end{equation}
and
\begin{equation}\label{eq:5-BoundofZnKnAnGn}
\sup\limits_{n\geq 1}\E\left[\left(\int_0^T|Z_t^n|^2{\rm d}t\right)^{p\over 2}+|K_T^n|^p+|A_T^n|^p+\left(\int_0^T|g_n(t,Y_t^n,Z_t^n)|
{\rm d}t\right)^p\right]<+\infty,
\end{equation}
then there exists a quadruple $(Y_\cdot,Z_\cdot,K_\cdot,A_\cdot)\in \s^p\times\M^p\times\vcal^{+,p}\times\vcal^{+,p}$ which verifies DRBSDE $(\xi,g+{\rm d}V,L,U)$ such that
$$
\lim\limits_{n\To \infty}\left(\|Y_\cdot^n-Y_\cdot\|_{\s^p}+
\|Z_\cdot^n-Z_\cdot\|_{\M^p}
+\|K_\cdot^n-K_\cdot\|_{\s^p}
+\|A_\cdot^n-A_\cdot\|_{\s^p}\right)=0.\vspace{0.2cm}
$$
\end{pro}

\begin{proof}
Since $Y_\cdot^n$ increases in $n$ and is bounded above by a process $\bar Y_\cdot$, there exists a progressively measurable real-valued process $Y_\cdot$ such that for each $t\in\T$,
\begin{equation}\label{eq:5-YnIncreaseBounded}
 Y_t^n\uparrow Y_t \ \ \ {\rm and} \ \ \ |Y_t|\vee \left(\sup_{n\geq 1}|Y_t^n|\right)\leq |Y_t^1|+|\bar Y_t|.
\end{equation}
Furthermore, since ${\rm d}A^n\leq {\rm d}A^{n+1}$ and ${\rm d}K^n\geq {\rm d}K^{n+1}$ for each $n\geq 1$, a same analysis as that in proving \eqref{eq:4-ConvergenceOfAnInSp} yields that there exist two processes $K_\cdot$ and $A_\cdot$ in $\vcal^{+,p}$ such that
\begin{equation}
\label{eq:5-ConvergenceOfKnandAnInSp}
\lim\limits_{n\To \infty}\left(\|K_\cdot^n-K_\cdot\|_{\s^p}
+\|A_\cdot^n-A_\cdot\|_{\s^p}\right)=0.
\end{equation}

For each positive integer $l,q\geq 1$, as in the proof of \cref{pro:4-Penalization}, we introduce the following two stopping times:
$$
\begin{array}{rll}
\tau_l&:=&\Dis \inf\left\{t\geq 0:\  |Y_t^1|+|\bar Y_t|+\int_0^t f_s{\rm d}s\geq l\right\}\wedge T;\vspace{0.2cm}\\
\sigma_{l,q}&:=&\Dis \inf\left\{t\geq 0:\  \int_0^t \psi_s(l) {\rm d}s\geq q\right\}\wedge \tau_l.
\end{array}
$$
Then we have
\begin{equation}
\label{eq:5-StabilityOftauandSigma}
\mathbb{P}\left(\left\{\omega:\ \exists l_0(\omega), q_0(\omega)\geq 1,\ \RE l\geq l_0(\omega), \RE q\geq q_0(\omega), \ \sigma_{l,q}(\omega)=T\right\}\right)=1.
\end{equation}
Furthermore, since all $g_n$ satisfy \ref{A:(HH)} with the same $f_\cdot, \psi_\cdot(r)$ and $\lambda$, and \eqref{eq:5-YnIncreaseBounded} holds, in view of the definitions of $\tau_l$ and $\sigma_{l,q}$, we know that $\as$, for each $l,q,n\geq 1$,
\begin{equation}
\label{eq:5-BoundOfgnYnZnWithStoppingtime}
\mathbbm{1}_{t\leq \sigma_{l,q}}|g_n(t,Y_t^n,Z_t^n)|\leq \mathbbm{1}_{t\leq \tau_{l}}f_t+\mathbbm{1}_{t\leq \sigma_{l,q}}\psi_t(l)+\lambda |Z_t^n|
\vspace{0.1cm}
\end{equation}
with
\begin{equation}
\label{eq:5-FtandPhitLeqlq}
\E\left[\int_0^T \mathbbm{1}_{t\leq \tau_{l}}f_t{\rm d}t\right]\leq l\ \ {\rm and}\ \ \E\left[\int_0^T \mathbbm{1}_{t\leq \sigma_{l,q}}\psi_t(l){\rm d}t\right]\leq q.\vspace{0.3cm}
\end{equation}
The rest proof is divided into 3 steps, which will be detailed in Appendix.\vspace{0.2cm}

{\bf Step 1.}\ Based on \eqref{eq:5-BoundofZnKnAnGn}-\eqref{eq:5-FtandPhitLeqlq}, making use of (ii) of \cref{lem:2-Lemma1}, H\"{o}lder's inequality and Lebesgue's dominated convergence theorem, we show that $\{Y_\cdot^n\}$ converges to the process $Y_\cdot$ in $\s^p$ as $n\to \infty$.\vspace{0.2cm}

{\bf Step 2.}\ By virtue of (i) of \cref{lem:2-Lemma1}, \eqref{eq:5-BoundofZnKnAnGn} and the conclusion of step 1, we show that $\{Z_\cdot^n\}$ converges to a process $Z_\cdot$ in $\M^p$ as $n\to \infty$.\vspace{0.2cm}

{\bf Step 3.}\ Making use of \eqref{eq:5-LocalUniformConvergenceofgnFromleft}-
\eqref{eq:5-FtandPhitLeqlq} and conclusions of steps 1 and 2, we show that  $(Y_\cdot,Z_\cdot,K_\cdot,A_\cdot)$ is a solution of DRBSDE $(\xi,g+{\rm d}V,L,U)$ in $\s^p\times\M^p\times\vcal^{+,p}\times\vcal^{+,p}$. \cref{pro:5-Approximation} is then proved.\vspace{0.2cm}
\end{proof}

\begin{rmk}
\label{rmk:5-ApproximationWhenYnDecrease}
We remark that the conclusion of \cref{pro:5-Approximation} still holds if we replace the expression that $Y_\cdot^n\leq Y_\cdot^{n+1}\leq \bar Y_\cdot$, ${\rm d}A^n\leq {\rm d}A^{n+1}$ and ${\rm d}K^n\geq {\rm d}K^{n+1}$ for each $n\geq 1$ and a process $\bar Y_\cdot\in \s^p$ with $\underline Y_\cdot\leq Y_\cdot^{n+1}\leq Y_\cdot^n$, ${\rm d}A^n\geq {\rm d}A^{n+1}$ and ${\rm d}K^n\leq {\rm d}K^{n+1}$ for each $n\geq 1$ and a process $\underline Y_\cdot\in \s^p$, and replace the expression that $y^n\leq y$ in \eqref{eq:5-LocalUniformConvergenceofgnFromleft} with $y^n\geq y$.\vspace{0.2cm}
\end{rmk}

\subsection{Existence of the minimal (maximal) solution\vspace{0.2cm}}

We now consider the DRBSDEs where the generator $g$ may be discontinuous and have a general growth in $y$. Let us first introduce the following assumptions introduced by \citet{FanJiang2012SPL}:

\begin{enumerate}

\renewcommand{\theenumi}{(A1a)}
\renewcommand{\labelenumi}{\theenumi}

\item \label{A:(A1a)} $g$ is left-continuous and lower semi-continuous in $y$, and continuous in $z$, i.e., $\as$, for each $(y_0,z_0)\in \R^{1+d}$, we have
$$
\lim\limits_{(-\infty,y_0]\times\R^d\ni(y,z)\To (y_0, z_0)}g(\omega,t,y,z)=g(\omega,t,y_0,z_0)
$$
and
$$
\liminf\limits_{[y_0,+\infty)\times\R^d\ni(y,z)\To (y_0,z_0)}g(\omega,t,y,z)\geq g(\omega,t,y_0,z_0).
$$

\renewcommand{\theenumi}{(A1b)}
\renewcommand{\labelenumi}{\theenumi}

\item \label{A:(A1b)} $g$ is right-continuous and upper semi-continuous in $y$, and continuous in $z$, i.e., $\as$, for each $(y_0,z_0)\in \R^{1+d}$, we have
$$
\lim\limits_{[y_0,+\infty)\times\R^d\ni(y,z)\To (y_0, z_0)}g(\omega,t,y,z)=g(\omega,t,y_0,z_0)
$$
and
$$
\limsup\limits_{(-\infty,y_0]\times\R^d\ni(y,z)\To (y_0,z_0)}g(\omega,t,y,z)\leq g(\omega,t,y_0,z_0).
$$

\renewcommand{\theenumi}{(A2)}
\renewcommand{\labelenumi}{\theenumi}

\item \label{A:(A2)} $g$ has a linear growth in $(y,z)$, i.e., there exist two constants $\tilde\mu,\tilde\lambda\geq 0$ and a process $\tilde f_\cdot\in\hcal^p$ such that $\as$, for each $(y,z)\in \R^{1+d}$,
$$|g(\omega,t,y,z)|\leq \tilde f_t(\omega)+\tilde\mu |y|+\tilde\lambda |z|.$$
\end{enumerate}

\begin{rmk}
\label{rmk:5-(HH)(i)Eqivalent(A1a)(A1b)}
It is clear that \ref{A:(HH)}(i) $\Leftrightarrow$ \ref{A:(A1a)} $+$ \ref{A:(A1b)}, and that \ref{A:(A2)} $\Leftrightarrow$ \ref{A:(H2')}(ii) $+$ \ref{A:(H3s)}.\vspace{0.2cm}
\end{rmk}

\begin{thm}
\label{thm:5-ExistenceOfDRBSDEunder(H2')(A1a)}
Assume that $p>1$, $V_\cdot\in\vcal^{p}$, $g^1$ satisfies assumptions \ref{A:(H1)}, \ref{A:(H2')} and \ref{A:(H3)}, $g^2$ satisfies assumptions \ref{A:(A1a)} (resp. \ref{A:(A1b)}) and \ref{A:(A2)}, and that $g=g^1+g^2$. Assume further that assumption \ref{A:(H4)} holds for $L_\cdot$, $U_\cdot$, $\xi$, $X_\cdot$ and $g$ (or $g^1$). Then, DRBSDE $(\xi,g+{\rm d}V,L_\cdot,U_\cdot)$ admits a minimal (resp. maximal) solution $(Y_\cdot,Z_\cdot, K_\cdot,A_\cdot)$ in the space $\s^p\times\M^p\times \vcal^{+,p}\times \vcal^{+,p}$.
\end{thm}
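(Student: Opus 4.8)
The plan is to approximate the possibly discontinuous summand $g^2$ from below by a sequence of generators that are Lipschitz in $y$, thereby reducing the statement to \cref{thm:4-ExistenceUnder(H2')} (existence of minimal solutions for regular generators) and \cref{pro:5-Approximation} (stability under monotone approximation). I treat only the case \ref{A:(A1a)} yielding the minimal solution; the case \ref{A:(A1b)} is entirely symmetric, obtained by approximating $g^2$ from above via a sup-convolution and invoking \cref{rmk:5-ApproximationWhenYnDecrease} in place of \cref{pro:5-Approximation}.

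First I would fix $n_0\ge\tilde\mu$ (with $\tilde\mu,\tilde\lambda,\tilde f_\cdot$ the constants and process from \ref{A:(A2)}) and, for $n\ge n_0$, set
\begin{equation*}
g^2_n(\omega,t,y,z):=\inf_{u\in\R}\left\{g^2(\omega,t,u,z)+n|y-u|\right\},\qquad g_n:=g^1+g^2_n .
\end{equation*}
Using \ref{A:(A2)} one verifies that $g^2_n$ is finite and $n$-Lipschitz in $y$, is continuous in $z$, obeys the linear growth of \ref{A:(A2)} with the \emph{same} $\tilde\mu,\tilde\lambda,\tilde f_\cdot$ for every $n\ge n_0$, is nondecreasing in $n$ with $g^2_n\uparrow g^2$ by the lower semicontinuity in \ref{A:(A1a)}, and satisfies the directional convergence demanded in \eqref{eq:5-LocalUniformConvergenceofgnFromleft}: whenever $y^n\le y$, $y^n\To y$ and $z^n\To z$ one has $g^2_n(\cdot,y^n,z^n)\To g^2(\cdot,y,z)$, the approach from the left being exactly where the left-continuity in \ref{A:(A1a)} is used. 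Consequently each $g_n$ satisfies \ref{A:(H1)}, \ref{A:(H2')} and \ref{A:(H3)}, and \ref{A:(H4)} holds for $g_n$ with the very same $X_\cdot$, since $g^2_n(\cdot,X_\cdot,0)$ is dominated by $\tilde f_\cdot+\tilde\mu|X_\cdot|\in\hcal^p$ regardless of whether \ref{A:(H4)} was assumed for $g$ or for $g^1$. Combining the \ref{A:(AA)} bound of $g^1$ with the uniform linear growth of $g^2_n$, all the $g_n$ satisfy \ref{A:(AA)} with one common triple $(\bar f_\cdot,\bar\mu,\bar\lambda)$.

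Then \cref{thm:4-ExistenceUnder(H2')}(i) furnishes, for each $n\ge n_0$, a minimal solution $(Y^n_\cdot,Z^n_\cdot,K^n_\cdot,A^n_\cdot)$ of DRBSDE $(\xi,g_n+{\rm d}V,L,U)$. Since $g_n\le g_{n+1}$ with identical barriers and terminal value, comparing the minimal solutions of the penalized $\bar{R}$BSDEs that underlie \cref{thm:4-ExistenceUnder(H2')} (via the \ref{A:(H2')}-comparison of \cref{pro:2-ComparisonOfUniquenessSolution}(iii) together with \cref{rmk:2-ExistenceAndComparisonOfMaximalSolution}) and passing to the limit in the penalization parameter through the $\s^p$-convergence of the penalized reflection processes, I obtain $Y^n_\cdot\le Y^{n+1}_\cdot$, ${\rm d}A^n\le{\rm d}A^{n+1}$ and ${\rm d}K^n\ge{\rm d}K^{n+1}$. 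The uniform bound \eqref{eq:5-BoundForDRBSDE} with a single $\eta\in\mathbb{L}^1(\F_T)$ follows from \cref{pro:3-EstimateOfPenalizationEq} and \cref{rmk:3-RemarkOfKeyEstimate} together with \cref{lem:3-Lemma1}, because the quantities entering those estimates depend on $g_n$ only through $\tilde f_\cdot,\tilde\mu,\tilde\lambda$ and the fixed $X_\cdot$, all controlled uniformly in $n$. Having verified the monotonicity, the directional convergence of $g_n$ to $g$, and the uniform estimate, \cref{pro:5-Approximation} produces a quadruple $(Y_\cdot,Z_\cdot,K_\cdot,A_\cdot)\in\s^p\times\M^p\times\vcal^{+,p}\times\vcal^{+,p}$ solving DRBSDE $(\xi,g+{\rm d}V,L,U)$, with $Y^n_\cdot\uparrow Y_\cdot$ and the stated $\s^p$/$\M^p$-convergences.

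Finally, for minimality, let $(Y'_\cdot,Z'_\cdot,K'_\cdot,A'_\cdot)$ be any solution of DRBSDE $(\xi,g+{\rm d}V,L,U)$ in the same space, and recall that $Y^n_\cdot$ is recovered in \cref{thm:4-ExistenceUnder(H2')} as the limit, in the penalization parameter, of the minimal solutions of the penalized $\bar{R}$BSDEs attached to $g_n$ and the lower barrier $L$. Since $g\ge g_n$ and $Y'_\cdot\ge L_\cdot$, a direct computation shows that $(Y'_\cdot,Z'_\cdot,A'_\cdot)$ solves each such penalized $\bar{R}$BSDE with the enlarged driver ${\rm d}\hat V^{n}:={\rm d}V+{\rm d}K'+\left(g(s,Y'_s,Z'_s)-g_n(s,Y'_s,Z'_s)\right){\rm d}s\ge{\rm d}V$; comparing it with the minimal penalized solution carrying driver $V$ (through \cref{pro:2-ComparisonOfUniquenessSolution}(iii) and \cref{rmk:2-ExistenceAndComparisonOfMaximalSolution}) and letting the penalization parameter tend to infinity yields $Y^n_\cdot\le Y'_\cdot$, whence $Y_\cdot\le Y'_\cdot$ after $n\To\infty$. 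The main obstacle throughout is that $g_n$ enjoys only \ref{A:(H2')}(ii) and not \ref{A:(H2)}(ii), so none of the comparison results can be applied directly to the DRBSDEs: each comparison—needed both for the monotonicity of $K^n,A^n$ and for minimality—must instead be routed through the minimal solutions of the penalized one-barrier equations, where the \ref{A:(H2')}-comparison is available, and then transported back by the $\s^p$-limits. Verifying the left-approach convergence \eqref{eq:5-LocalUniformConvergenceofgnFromleft} of the inf-convolution under only left-continuity and lower semicontinuity is the other delicate point.
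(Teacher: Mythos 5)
Your overall architecture is the same as the paper's: build an increasing sequence $g_n\uparrow g$ of regularized generators, solve DRBSDE $(\xi,g_n+{\rm d}V,L,U)$ for each $n$, establish the monotonicity properties and the uniform bound \eqref{eq:5-BoundForDRBSDE}, conclude with \cref{pro:5-Approximation}, and get minimality by comparison. But your regularization is genuinely different: you inf-convolve only $g^2$, and only in $y$, so your $g_n=g^1+g^2_n$ satisfies \ref{A:(H2')} but \emph{not} \ref{A:(H2)}(ii). The paper instead inf-convolves $g^1$ in $z$ (with parameter $n+2\lambda$) and $g^2$ in $(y,z)$ jointly, so its $g_n$ is Lipschitz in $z$ and satisfies \ref{A:(H1)}, \ref{A:(H2)}, \ref{A:(H3)}; this is what allows it to use \cref{thm:4-ExistenceUnder(H2)} (unique solutions), \cref{cor:3-CorollaryOfComparisonTheormen} and \cref{cor:4-ComparisonOfdKanddA} directly at the DRBSDE level. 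You correctly diagnose this obstacle and propose to reroute every comparison (monotonicity of $Y^n_\cdot,K^n_\cdot,A^n_\cdot$ and minimality) through the minimal solutions of the penalized one-barrier equations via \cref{pro:2-ComparisonOfUniquenessSolution}, \cref{rmk:2-ExistenceAndComparisonOfMaximalSolution} and the $\s^p$-limits of \cref{thm:4-ExistenceUnder(H2')}; with care (for minimality one must pass through the minimal solution of the equation with the enlarged driver $\hat V^n$, since the (H2')-comparison is minimal-to-minimal, and for ${\rm d}K^n\geq{\rm d}K^{n+1}$ one needs a common subsequence in the penalization parameter), this part can indeed be made rigorous, as can the continuity of $g^2_n$ in $z$ (which needs the joint lower semicontinuity hidden in \ref{A:(A1a)} plus a localization of the infimum requiring $n>\tilde\mu$ strictly).

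The genuine gap is the uniform estimate \eqref{eq:5-BoundForDRBSDE}. You assert it follows by applying \cref{pro:3-EstimateOfPenalizationEq} and \cref{rmk:3-RemarkOfKeyEstimate} to each $g_n$ ``because the quantities entering those estimates depend on $g_n$ only through $\tilde f_\cdot,\tilde\mu,\tilde\lambda$ and the fixed $X_\cdot$''. That is false: the constants $C$ and $\bar C$ in \cref{lem:3-Lemma2} and \cref{pro:3-EstimateOfPenalizationEq} depend on the linear-growth constant $A$ of the one-sided Osgood modulus $\rho$ of the generator (see \cref{rmk:2-LinearGrowthOfRhoandPhi}), i.e.\ on \ref{A:(H1)} \emph{quantitatively}, not only on the \ref{A:(H2')}(ii)/\ref{A:(AA)} parameters. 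Your $g_n$ satisfies \ref{A:(H1)} only with $\rho(u)+nu$, and no $n$-free one-sided modulus exists for the $y$-inf-convolution: for instance $g^2(y)=\mathbbm{1}_{y>0}$ satisfies \ref{A:(A1a)} and \ref{A:(A2)}, yet $g^2_n(y)=\min(ny^+,1)$ gives $\left(g^2_n(y_1)-g^2_n(y_2)\right){\rm sgn}(y_1-y_2)=n(y_1-y_2)$ for $0<y_2<y_1<1/n$, which forces any dominating modulus $\rho'$ to satisfy $\rho'(0^+)\geq 1$. Hence the bounds produced by \cref{pro:3-EstimateOfPenalizationEq} applied to $g_n$ blow up with $n$, and no single $\eta\in\mathbb{L}^1(\F_T)$ results. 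This is exactly why the paper never applies those estimates to $g_n$: it first sandwiches $g_n$ between the $n$-independent generators $\underline g$ and $\bar g$ built from $g^1(\cdot,y,0)\mp\left((f_\cdot+\tilde f_\cdot)+(\mu+\tilde\mu)|y|+(\lambda+\tilde\lambda)|z|\right)$, which satisfy \ref{A:(H2s)}, obtains $\underline Y_\cdot\leq Y^n_\cdot\leq \bar Y_\cdot$ and the bounds on $|K^n_T-K^n_t|$, $|A^n_T-A^n_t|$ from \cref{cor:3-CorollaryOfComparisonTheormen} and \cref{cor:4-ComparisonOfdKanddA}, and only then controls $Z^n_\cdot$ and $\int|g_n(s,Y^n_s,Z^n_s)|\,{\rm d}s$ via \cref{lem:3-Lemma1}, whose constant depends only on the \ref{A:(AA)} parameters, which are uniform in $n$. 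Repairing your proof along these lines is further complicated by your weaker regularization: \cref{cor:4-ComparisonOfdKanddA} requires \emph{both} generators to satisfy \ref{A:(H2)}, so with your $g_n$ even the sandwich bounds on $K^n_\cdot,A^n_\cdot$ would again have to be pushed through the penalized one-barrier equations. In short, this step of your argument must be replaced, not merely elaborated.
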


\begin{proof}
We only prove the case of the minimal solution. Another case can be proved in a similar way in view of \cref{rmk:5-ApproximationWhenYnDecrease}. Assume now that $p>1$, $V_\cdot\in\vcal^{p}$, $g^1$ satisfies \ref{A:(H1)} with $\rho(\cdot)$, \ref{A:(H2')} with $f_\cdot$, $\mu$ and $\lambda$, and \ref{A:(H3)} with $\psi_\cdot(r)$, $g^2$ satisfies \ref{A:(A1a)} and \ref{A:(A2)} with $\tilde f_\cdot$, $\tilde \mu$ and $\tilde\lambda$, and that $g=g^1+g^2$. Assume further that \ref{A:(H4)} holds for $L_\cdot$, $U_\cdot$, $\xi$, $X_\cdot$ and $g^1$.\vspace{0.2cm}

In view of the assumptions of $g^1$ and $g^2$ together with the proof of Theorem 1 in \citet{FanJiang2012SPL}, it is not very difficult to verify that for each $n\geq 1$ and $(y,z)\in \R\times\R^d$, the following function
$$
g_n(\omega,t,y,z):=g^1_n(\omega,t,y,z)
+g^2_n(\omega,t,y,z)
$$
with
$$
g^1_n(\omega,t,y,z):=\inf\limits_{u\in\R^d}
\left[g^1(\omega,t,y,u)+(n+2\lambda)|u-z|\right]
$$
and
$$
g^2_n(\omega,t,y,z):=\inf\limits_{(u,v)\in\R^d}
\left[g^2(\omega,t,u,v)+(n+2\tilde\mu)|u-y|
+(n+2\tilde\lambda)|v-z|\right]\vspace{0.1cm}
$$
is well defined and progressively measurable, $\as$, $g_n$ increases in $n$ and  converges locally uniformly in $(y,z)$ to the generator $g=g^1+g^2$ as $n\To \infty$ in the sense of \eqref{eq:5-LocalUniformConvergenceofgnFromleft}, all $g^1_n$ satisfy \ref{A:(H1)} with the same $\rho(\cdot)$, \ref{A:(H2s)} with $n+2\lambda$, \ref{A:(H3)} with the same $\psi_\cdot(r)+\mu r+2f_\cdot$, and $\as$, $\RE\ n\geq 1$, $\RE\ (y,z)\in\R\times\R^{d}$,
\begin{equation}
\label{eq:5-g1nSatisfy(H2')(ii)}
|g^1_n(\omega,t,y,z)-g^1(\omega,t,y,0)|\leq f_t(\omega)+\mu |y|+\lambda |z|,
\end{equation}
and all $g^2_n$ satisfy \ref{A:(H1)} with $(n+2\tilde\mu)x$, \ref{A:(H2s)} with $n+2\tilde\lambda$, and $\as$, $\RE\ n\geq 1$, $\RE\ (y,z)\in\R\times\R^{d}$,
\begin{equation}
\label{eq:5-g2nSatisfy(A2)}
|g^2_n(\omega,t,y,z)|\leq \tilde f_t(\omega)+\tilde\mu |y|+\tilde\lambda |z|.
\end{equation}
Then, in view of \eqref{eq:5-g1nSatisfy(H2')(ii)}, \eqref{eq:5-g2nSatisfy(A2)} and \ref{A:(H3)} for $g^1$, we know that $\as$, $\RE\ n\geq 1$, $\RE\ (y,z)\in\R\times\R^{d}$,
\begin{equation}
\label{eq:5-gnyzLeq}
\begin{array}{lll}
\Dis|g_n(\cdot,y,z)|&\leq & |g^1_n(\cdot,y,z)|+|g^2_n(\cdot,y,z)|\leq  |g^1(\cdot,y,0)|+f_\cdot+\mu|y|+\lambda |z|+\tilde f_\cdot+\tilde\mu |y|+\tilde\lambda |z|\\
&\leq & |g^1(\cdot,0,0)|+f_\cdot+\tilde f_\cdot+\psi_\cdot(|y|)+(\mu+\tilde\mu)|y|
+(\lambda+\tilde\lambda) |z|.
\end{array}
\end{equation}
That is to say, all $g_n$ satisfy \ref{A:(HH)} with the same parameters.\vspace{0.2cm}

Note that for each $n\geq 1$, $g_n$ satisfies \ref{A:(H1)}, \ref{A:(H2)} and \ref{A:(H3)} by \cref{rmk:2-(H1s)Stronger(H1)} and \cref{rmk:5-(HH)(i)Eqivalent(A1a)(A1b)} together with \eqref{eq:5-g2nSatisfy(A2)}. Furthermore, by \eqref{eq:5-gnyzLeq} we know that for each $n\geq 1$, $g_n(\cdot,X_\cdot,0)\in\hcal^p$ and then \ref{A:(H4)} holds for $L_\cdot$, $U_\cdot$, $\xi$, $X_\cdot$ and $g_n$. It then follows from \cref{thm:4-ExistenceUnder(H2)} that there exists a unique solution $(Y_\cdot^n,Z_\cdot^n, K_\cdot^n, A_\cdot^n)$ of DRBSDE $(\xi,g_n+{\rm d}V,L,U)$ in the space $\s^p\times\M^p\times \vcal^{+,p}\times \vcal^{+,p}$ for each $n\geq 1$. And, noticing that $g_n$ increases in $n$, by \cref{cor:3-CorollaryOfComparisonTheormen} and \cref{cor:4-ComparisonOfdKanddA} we can deduce that $Y_\cdot^n\leq Y_\cdot^{n+1}$, ${\rm d}A^n\leq {\rm d}A^{n+1}$ and ${\rm d}K^n\geq {\rm d}K^{n+1}$ for each $n\geq 1$.\vspace{0.2cm}

In the sequel, we show that inequality \eqref{eq:5-BoundofZnKnAnGn} appearing in \cref{pro:5-Approximation} holds. In fact,
let
$$
\underline{g}(\cdot,y,z):=g^1(\cdot,y,0)-(f_\cdot+\tilde f_\cdot)-(\mu+\tilde\mu)|y|-(\lambda+\tilde\lambda)|z|
$$
and
$$
\bar g(\cdot,y,z):=g^1(\cdot,y,0)+(f_\cdot+\tilde f_\cdot)+(\mu+\tilde\mu)|y|+(\lambda+\tilde\lambda)|z|.
\vspace{0.2cm}
$$
It then follows from \eqref{eq:5-g1nSatisfy(H2')(ii)} and \eqref{eq:5-g2nSatisfy(A2)} that $\underline{g}\leq g_n\leq \bar g$ for each $n\geq 1$, and
both $\underline{g}$ and $\bar g$ satisfy assumptions \ref{A:(H1)}, \ref{A:(H2s)} and \ref{A:(H3)} with
$$
\underline{g}(\cdot, X_\cdot,0)=g^1(\cdot, X_\cdot,0)-(f_\cdot+\tilde f_\cdot)-(\mu+\tilde\mu)|X_\cdot|\in \hcal^p
$$
and
$$
\bar g(\cdot, X_\cdot,0)=g^1(\cdot, X_\cdot,0)+(f_\cdot+\tilde f_\cdot)+(\mu+\tilde\mu)|X_\cdot|\in \hcal^p.\vspace{0.1cm}
$$
Thus, it follows from \cref{thm:4-ExistenceUnder(H2)} that DRBSDE $(\xi,\underline{g}+{\rm d}V,L,U)$ and DRBSDE $(\xi,\bar g+{\rm d}V,L,U)$ admit respectively a unique solution $(\underline{Y}_\cdot,\underline{Z}_\cdot, \underline{K}_\cdot,\underline{A}_\cdot)$ and $(\bar Y_\cdot,\bar Z_\cdot, \bar K_\cdot, \bar A_\cdot)$ in the space $\s^p\times\M^p\times \vcal^{+,p}\times \vcal^{+,p}$, and by \cref{cor:3-CorollaryOfComparisonTheormen} and \cref{cor:4-ComparisonOfdKanddA}, we have that for each $t\in \T$ and $n\geq 1$,
\begin{equation}
\label{eq:5-BoundOfYnKnAn}
\underline{Y}_t\leq Y^n_t\leq \bar Y_t,\ \ |\bar K_T-\bar K_t|\leq |K^n_T-K^n_t|\leq |\underline K_T-\underline K_t|\ \ {\rm and}\ \  |\underline{A}_T-\underline{A}_t|\leq |A^n_T-A^n_t|\leq |\bar A_T-\bar A_t|.
\end{equation}
Furthermore, note that for each $n\geq 1$, $g_n$ satisfies assumption \ref{A:(AA)} with the same $\bar f_\cdot=|g^1(\cdot,0,0)|+f_\cdot+\tilde f_\cdot+A$, $\bar\mu=\tilde\mu+A$ and $\bar\lambda=\lambda+\tilde\lambda$ since $g^n_1$ satisfies assumption \ref{A:(H1)} with the same $\rho(\cdot)$, and inequalities \eqref{eq:5-g1nSatisfy(H2')(ii)} and \eqref{eq:5-g2nSatisfy(A2)} hold. It follows from \cref{pro:2-Pro1} that there exists a positive constant $C>0$ depending only on $p,A,\tilde\mu,\tilde\lambda,T$ such that
for each $t\in\T$ and $n\geq 1$,
\begin{equation}
\label{eq:5-BoundOfZngn}
\begin{array}{lll}
&&\Dis\E\left[\left.\left(\int_t^T |Z^n_s|^2\ {\rm d}s\right)^{p\over 2}+\left(\int_t^T |g_n(s,Y^n_s, Z^n_s)|\ {\rm d}s\right)^p\right|\F_t\right]\vspace{0.1cm}\\
&\leq &\Dis C\E\left[\left.|\xi|^p +|V|^p_{t,T} +|K^n_T-K^n_t|^p+|A^n_T-A^n_t|^p+\left(\int_t^T \bar f_s\ {\rm d}s\right)^p+1\right|\F_t\right].
\end{array}
\end{equation}
Finally, combining \eqref{eq:5-BoundOfYnKnAn} and \eqref{eq:5-BoundOfZngn} yields that \eqref{eq:5-BoundofZnKnAnGn} in \cref{pro:5-Approximation} holds.\vspace{0.2cm}

Up to now, we have checked all conditions in \cref{pro:5-Approximation}. It then follows from \cref{pro:5-Approximation} that DRBSDE $(\xi,g+{\rm d}V,L,U)$ admits a solution $(Y_\cdot,Z_\cdot,K_\cdot,A_\cdot)$ in $\s^p\times\M^p\times\vcal^{+,p}\times\vcal^{+,p}$ such that
\begin{equation}
\label{eq:5-ConvergenceOfYnZnKnAn}
\lim\limits_{n\To\infty}(\|Y_\cdot^n
-Y_\cdot\|_{\s^p}+\|Z_\cdot^n
-Z_\cdot\|_{\M^p}+\|K_\cdot^n
-K_\cdot\|_{\s^p}+\|A_\cdot^n
-A_\cdot\|_{\s^p})=0.\vspace{0.2cm}
\end{equation}

Finally, let us show that $(Y_\cdot,Z_\cdot,K_\cdot,A_\cdot)$ is just the minimal solution of RBSDE $(\xi,g+{\rm d}V,L,U)$ in the space $\s^p\times\M^p\times\vcal^{+,p}\times\vcal^{+,p}$. In fact, if $(Y'_\cdot,Z'_\cdot,K'_\cdot,A'_\cdot)$ is also a solution of DRBSDE $(\xi,g+{\rm d}V,L,U)$ in the space $\s^p\times\M^p\times\vcal^{+,p}\times\vcal^{+,p}$, then noticing that for each $n\geq 1$, $g_n\leq g$ and $g_n$ satisfies \ref{A:(H1)} and \ref{A:(H2)}, it follows from \cref{cor:3-CorollaryOfComparisonTheormen} that for each $n\geq 1$ and $t\in\T$, $Y_t^n\leq Y'_t$. Thus, by \eqref{eq:5-ConvergenceOfYnZnKnAn} we obtain that $Y_t\leq Y'_t$ for each $t\in \T$, which is the desired result. \cref{thm:5-ExistenceOfDRBSDEunder(H2')(A1a)} is then proved.\vspace{0.2cm}
\end{proof}

By \cref{cor:3-CorollaryOfComparisonTheormen}, \cref{cor:4-ComparisonOfdKanddA} and the proof of \cref{thm:5-ExistenceOfDRBSDEunder(H2')(A1a)}, it is not hard to verify the following comparison theorem for the minimal (resp. maximal) $L^p$ solutions of DRBSDEs.

\begin{pro}
\label{pro:5-ComparisonUnder(H2')(A1a)}
Let $p>1$ and for $i=1,2$, assume that $V_\cdot^i\in \vcal^p$, $g^{1,i}$ satisfies \ref{A:(H1)}, \ref{A:(H2')} and \ref{A:(H3)}, $g^{2,i}$ satisfies \ref{A:(A1a)} (resp. \ref{A:(A1b)}) and \ref{A:(A2)}, and that $g^i:=g^{1,i}+g^{2,i}$. Assume further that for $i=1,2$, \ref{A:(H4)} holds for $\xi^i$, $L_\cdot^i$, $U_\cdot^i$, and $X_\cdot^i$ associated with $g^i$ (or $g^{1,i}$), and $(Y_\cdot^i,Z_\cdot^i,K_\cdot^i,A_\cdot^i)$ is the minimal (resp. maximal) solution of DRBSDE $(\xi^i,g^i+{\rm d}V^i,L^i,U^i)$ in the space $\s^p\times \M^p\times \vcal^{+,p}\times \vcal^{+,p}$. If $\xi^1\leq \xi^2$, ${\rm d}V^1\leq {\rm d}V^2$, $L^1_\cdot\leq L^2_\cdot$, $U^1_\cdot\leq U^2_\cdot$, and for each $(y,z)\in \R\times \R^d$,
$$\as,\ \ g^{1,1}(t,y,z)\leq g^{1,2}(t,y,z)\ \ {\rm and}\ \ g^{2,1}(t,y,z)\leq g^{2,2}(t,y,z),$$
then $Y_t^1\leq Y_t^2$, $t\in \T$. Furthermore, if $L^1_\cdot=L^2_\cdot$ and $U^1_\cdot=U^2_\cdot$, then ${\rm d}K^1\geq {\rm d}K^2$ and ${\rm d}A^1\leq {\rm d}A^2$.\vspace{0.2cm}
\end{pro}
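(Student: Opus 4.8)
The plan is to rerun the monotone approximation scheme from the proof of \cref{thm:5-ExistenceOfDRBSDEunder(H2')(A1a)} simultaneously for the data indexed by $i=1,2$, and then to pass the discrete-level comparisons to the limit. I treat only the minimal-solution case; the maximal case is symmetric via \cref{rmk:5-ApproximationWhenYnDecrease}. First I would fix \emph{common} constants $\lambda:=\max\{\lambda_1,\lambda_2\}$, $\tilde\mu:=\max\{\tilde\mu_1,\tilde\mu_2\}$ and $\tilde\lambda:=\max\{\tilde\lambda_1,\tilde\lambda_2\}$, where $\lambda_i$ is the constant of \ref{A:(H2')} for $g^{1,i}$ and $\tilde\mu_i,\tilde\lambda_i$ are those of \ref{A:(A2)} for $g^{2,i}$; each $g^{1,i}$ (resp. $g^{2,i}$) then also satisfies its condition with the enlarged constants. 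With these, for $i=1,2$ and $n\ge1$ I define, exactly as in \cref{thm:5-ExistenceOfDRBSDEunder(H2')(A1a)},
\[
g^{1,i}_n(\omega,t,y,z):=\inf_{u\in\R^d}\bigl[g^{1,i}(\omega,t,y,u)+(n+2\lambda)|u-z|\bigr],
\]
\[
g^{2,i}_n(\omega,t,y,z):=\inf_{(u,v)\in\R\times\R^d}\bigl[g^{2,i}(\omega,t,u,v)+(n+2\tilde\mu)|u-y|+(n+2\tilde\lambda)|v-z|\bigr],
\]
and set $g^i_n:=g^{1,i}_n+g^{2,i}_n$. Because the common constants dominate the individual growth parameters, the analysis of \cref{thm:5-ExistenceOfDRBSDEunder(H2')(A1a)} applies verbatim to each $i$: the $g^i_n$ are well defined, increase in $n$, converge locally uniformly in $(y,z)$ to $g^i$ in the sense of \eqref{eq:5-LocalUniformConvergenceofgnFromleft}, and each $g^i_n$ satisfies \ref{A:(H1)}, \ref{A:(H2)}, \ref{A:(H3)}, while $g^i_n(\cdot,X^i_\cdot,0)\in\hcal^p$ gives \ref{A:(H4)} for the data $\xi^i,L^i,U^i,X^i$. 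Hence \cref{thm:4-ExistenceUnder(H2)} yields a unique solution $(Y^{i,n}_\cdot,Z^{i,n}_\cdot,K^{i,n}_\cdot,A^{i,n}_\cdot)$ of DRBSDE $(\xi^i,g^i_n+{\rm d}V^i,L^i,U^i)$, and \cref{pro:5-Approximation} produces the minimal solution $(Y^i_\cdot,Z^i_\cdot,K^i_\cdot,A^i_\cdot)$ together with the strong convergence
\[
\lim_{n\To\infty}\bigl(\|Y^{i,n}_\cdot-Y^i_\cdot\|_{\s^p}+\|Z^{i,n}_\cdot-Z^i_\cdot\|_{\M^p}+\|K^{i,n}_\cdot-K^i_\cdot\|_{\s^p}+\|A^{i,n}_\cdot-A^i_\cdot\|_{\s^p}\bigr)=0.
\]

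The crucial observation, and the step I expect to carry the weight of the argument, is that the inf-convolutions preserve the pointwise ordering of the generators. Since $g^{1,1}\le g^{1,2}$ and $g^{2,1}\le g^{2,2}$ hold ${\rm d}\mathbb{P}\times{\rm d}t$-a.e. for every argument, taking the infimum over the auxiliary variables in the two \emph{identical} penalized expressions (this is precisely why the common constants are indispensable) gives $g^{1,1}_n\le g^{1,2}_n$ and $g^{2,1}_n\le g^{2,2}_n$, hence $g^1_n\le g^2_n$ for each $n\ge1$. With $\xi^1\le\xi^2$, ${\rm d}V^1\le{\rm d}V^2$, $L^1_\cdot\le L^2_\cdot$, $U^1_\cdot\le U^2_\cdot$ and $g^1_n$ satisfying \ref{A:(H1)} and \ref{A:(H2)}(ii), \cref{cor:3-CorollaryOfComparisonTheormen} gives $Y^{1,n}_t\le Y^{2,n}_t$ for each $t\in\T$ and $n\ge1$. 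Letting $n\To\infty$ and using the $\s^p$-convergence of $Y^{i,n}_\cdot$ to $Y^i_\cdot$ yields $Y^1_t\le Y^2_t$ for each $t\in\T$.

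Finally, under the extra hypothesis $L^1_\cdot=L^2_\cdot$ and $U^1_\cdot=U^2_\cdot$, I apply \cref{cor:4-ComparisonOfdKanddA} at the level of the approximations; its hypotheses hold since $g^1_n\le g^2_n$, both satisfy \ref{A:(H1)}, \ref{A:(H2)}, \ref{A:(H3)}, and \ref{A:(H4)} holds with the common barriers. This gives ${\rm d}K^{1,n}\ge{\rm d}K^{2,n}$ and ${\rm d}A^{1,n}\le{\rm d}A^{2,n}$, that is, by \cref{dfn:2-Singular},
\[
K^{1,n}_t-K^{1,n}_s\ge K^{2,n}_t-K^{2,n}_s,\qquad A^{1,n}_t-A^{1,n}_s\le A^{2,n}_t-A^{2,n}_s,\qquad 0\le s\le t\le T.
\]
Since $K^{i,n}_\cdot\to K^i_\cdot$ and $A^{i,n}_\cdot\to A^i_\cdot$ in $\s^p$, hence pointwise in $t$ (along a subsequence, $\ps$), these increment inequalities pass to the limit, which is exactly ${\rm d}K^1\ge{\rm d}K^2$ and ${\rm d}A^1\le{\rm d}A^2$. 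The only genuine care needed is the monotonicity-preservation of the approximations noted above and the fact that the limiting objects coincide with the minimal solutions, which is automatic because both are generated by the same sequences $(Y^{i,n}_\cdot,Z^{i,n}_\cdot,K^{i,n}_\cdot,A^{i,n}_\cdot)$.
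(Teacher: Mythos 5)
Your proposal is correct and is essentially the proof the paper intends: the paper itself only remarks that the result follows from \cref{cor:3-CorollaryOfComparisonTheormen}, \cref{cor:4-ComparisonOfdKanddA} and the proof of \cref{thm:5-ExistenceOfDRBSDEunder(H2')(A1a)}, which is precisely your scheme of comparing the inf-convolution approximations level by level and passing the inequalities to the limit in $\s^p$. Your explicit choice of common constants $\lambda,\tilde\mu,\tilde\lambda$, so that the two inf-convolutions carry identical penalty terms and hence preserve the ordering $g^1_n\le g^2_n$, is exactly the detail the paper leaves implicit, and your identification of the limit quadruple with the given minimal solution is legitimate because minimal solutions are unique as quadruples (equal $Y$ by minimality, then equal $Z$ by It\^{o}'s formula and equal $K,A$ by the Jordan decomposition argument of \cref{thm:3-UniquenessOfSolutions}).
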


\subsection{Examples and remarks\vspace{0.2cm}}

We introduce two examples which \cref{thm:5-ExistenceOfDRBSDEunder(H2')(A1a)} can be applied to, but any existing results can not be applied to.

\begin{ex}
Let the generator $g:=g^1+g^2$ with
$$
g^1(\omega,t,y,z)=\bar h(|y|)-e^{y|B_t(\omega)|}+(e^{-y}\wedge 1) |z|\sin |z|+{1\over \sqrt[4]{t}}\mathbbm{1}_{t>0}
$$
and
$$
g^2(\omega, t,y,z)=\mathbbm{1}_{y\leq 0}\sqrt[3]{|y|} +\mathbbm{1}_{y>0}\cos y+\sqrt{|y| |z|}+|B_t(\omega)|,
$$
where, with $\delta>0$ small enough,
$$\bar h(x)=\left\{
\begin{array}{lll}
x|\ln x|\ln|\ln x|& ,&0<x\leq \delta;\\
\bar h'(\delta-)(x-\delta)+\bar h(\delta)& ,&x> \delta;\\
0& ,&{\rm other\ cases}.
\end{array}\right.\vspace{0.1cm}$$
It is not very hard to verify that $g^1$ satisfies \ref{A:(H1)} with $\rho(\cdot)=\bar h(\cdot)$, \ref{A:(H2')} with $f_\cdot\equiv 0$, $\mu=0$ and $\lambda=1$, and \ref{A:(H3)}, and that $g^2$ satisfies \ref{A:(A1a)} and \ref{A:(A2)} with $\tilde f_\cdot=|B_\cdot|+2$, $\tilde\mu=2$ and $\tilde\lambda=1$ for each $p>1$. Thus, if \ref{A:(H4)} holds for $g^1$ and some $\xi$, $L_\cdot$, $U_\cdot$, $X_\cdot$ and $p>1$, and $V_\cdot\in \vcal^p$, then by \cref{thm:5-ExistenceOfDRBSDEunder(H2')(A1a)} we know that DRBSDE $(\xi,g+{\rm d}V,L_\cdot,U_\cdot)$ admits a minimal solution in $\s^p\times\M^p\times\vcal^{+,p}\times\vcal^{+,p}$.
\end{ex}

\begin{ex}
Let the generator
$$g(\omega, t,y,z)=e^{-y}\mathbbm{1}_{y\geq 0}+\sqrt[3]{|z|}.$$
It is easy to check that this $g$ satisfies \ref{A:(A1b)} and \ref{A:(A2)} with $\tilde f_\cdot=2$, $\tilde\mu=0$ and $\tilde\lambda=1$ for each $p>1$. Thus, if \ref{A:(H4)} holds for $g$ and some $\xi$, $L_\cdot$, $U_\cdot$, $X_\cdot$ and $p>1$, and $V_\cdot\in \vcal^p$, it then follows from \cref{thm:5-ExistenceOfDRBSDEunder(H2')(A1a)} that DRBSDE $(\xi,g+{\rm d}V,L_\cdot,U_\cdot)$ admits a maximal solution in $\s^p\times\M^p\times\vcal^{+,p}\times\vcal^{+,p}$.\vspace{0.2cm}
\end{ex}

\begin{rmk}
\label{rmk:5-1}
With respect to this section, we would like to mention the following things.

\begin{itemize}

\item [1)] Compared with that in Proposition 4.2 of \citet{Fan2017AMS}, the assumptions \eqref{eq:5-LocalUniformConvergenceofgnFromleft} and \eqref{eq:5-BoundofZnKnAnGn} in \cref{pro:5-Approximation} is weaker and more natural, although some ideas of the proof are borrowed from there. And, in view of \cref{rmk:2-ConnectionOfBSDEandRBSDEandDRBSDE}, \cref{pro:5-Approximation} together with \cref{rmk:5-ApproximationWhenYnDecrease} includes Proposition 4.2 in \citet{Fan2017AMS} as its particular case.

\item [2)] In view of \cref{rmk:2-ConnectionOfBSDEandRBSDEandDRBSDE}, \cref{thm:5-ExistenceOfDRBSDEunder(H2')(A1a)} and \cref{pro:5-ComparisonUnder(H2')(A1a)} include respectively
Theorem 5.13 and Proposition 5.15 in \citet{Fan2017AMS} as its special case . In particular, \cref{thm:5-ExistenceOfDRBSDEunder(H2')(A1a)} and \cref{pro:5-ComparisonUnder(H2')(A1a)} also consider the case that the generator $g$ may be discontinuous in $y$.

\item [3)] It follows from Remarks \ref{rmk:2-ConnectionOfBSDEandRBSDEandDRBSDE},
\ref{rmk:2-(H1s)Stronger(H1)}, \ref{rmk:2-ConnectionOf(HH)(AA)(H4)(H4L)} and \ref{rmk:5-(HH)(i)Eqivalent(A1a)(A1b)} that since the associated assumptions are more general, \cref{thm:5-ExistenceOfDRBSDEunder(H2')(A1a)} and \cref{pro:5-ComparisonUnder(H2')(A1a)} strengthen and unify some existing corresponding results on DRBSDEs, RBSDEs with one continuous barrier, and non-reflected BSDEs obtained, for example, in \citet{BriandDelyonHu2003SPA}, \citet{ElAsriHamadeneWang2011SAA}, \citet{Fan2017AMS}, \citet{FanJiang2012SPL}, \citet{HamadeneLepeltierMatoussi1997BSDEs}, \citet{HamadenePopier2012SD}, \citet{Klimsiak2012EJP}, \citet{MaFanSong2013BSM} and \citet{RozkoszSlominski2012SPA}.
\end{itemize}
\end{rmk}

\appendix
\section{}
\renewcommand{\appendixname}{}


In this section, we will supply the details omitted in the proof procedures of \cref{pro:4-Penalization} and \cref{pro:5-Approximation}.\vspace{0.2cm}

{\bf Complementary of the details for the proof of \cref{pro:4-Penalization}}. Now, we will detail the proof of steps 1-7 after the inequality \eqref{eq:4-IntegralOfftAndPsitleqlq}.\vspace{0.2cm}

{\bf Step 1.}\ We show that $Y_\cdot$ is a c\`{a}dl\`{a}g process. Let us fix a pair of $l,q\geq 1$ arbitrarily. From \eqref{eq:4-BoundOfgYnZn} and \eqref{eq:4-IntegralOfftAndPsitleqlq} together with \eqref{eq:4-ZnKnAnLeqEta}, it follows that there exists a subsequence $\{h^{n_j;l,q}_\cdot\}_{j=1}^{\infty}$ of the sequence $\{h^{n; l,q}_\cdot\}_{n=1}^{\infty}$ which converges weakly to a process $h^{l,q}_\cdot$ in the space $\hcal^1$.  Now, take any bounded
linear functional $\Phi(\cdot)$ defined on $\mathbb{L}^1(\F_T)$. Then there exists a constant $b>0$ such that for each $\overline{h}_\cdot \in \hcal^1$ and each stopping time $\bar\tau$ taking values in $\T$, we have
$$
\left|\Phi(\int_0^{\bar\tau}\overline h_s {\rm d}s)\right|\leq b\left\|\int_0^{\bar\tau}\overline h_s {\rm d}s\right\|_{\mathbb{L}^1}\leq b\left\|\overline h\right\|_{\hcal^1}.
$$
Hence, for each stopping time $\bar\tau$ taking values in $\T$, $\Phi(\int_0^{\bar\tau}\cdot \ {\rm d}s )$ is a bounded linear functional defined on $\hcal^1$, which means that
$$\lim\limits_{j\To \infty} \Phi\left(\int_0^{\bar\tau} h_s^{n_j;l,q} {\rm d}s \right)=\Phi\left(\int_0^{\bar\tau} h^{l,q}_s {\rm d}s \right).\vspace{0.1cm}$$
As a result, for every stopping time $\tau$ with $0\leq\tau\leq \sigma_{l,q}$, as $j\To \infty$,
\begin{equation}
\label{eq:4-WeaklyConvergenceOfGYnZn}
\int_0^\tau g(s,Y_s^{n_j},Z_s^{n_j}) {\rm d}s=\int_0^\tau h_s^{n_j;l,q} {\rm d}s\ \To\  \int_0^\tau h^{l,q}_s {\rm d}s\ \ {\rm weakly\ in}\ \mathbb{L}^1(\F_T).
\end{equation}
Furthermore, it follows from \eqref{eq:4-ZnKnAnLeqEta} and Lemma 4.4 of \citet{Klimsiak2012EJP} that there exists a process $Z_\cdot\in \M^p$ together with a subsequence of the sequence $\{n_j\}_{j=1}^{\infty}$, still denoted by itself, such that for any stopping time $\bar\tau$ taking values in $\T$, as $j\To \infty$,
\begin{equation}
\label{eq:4-WeaklyConvergenceOfZn}
\int_0^{\bar\tau} Z_s^{n_j}{\rm d}B_s\To \int_0^{\bar\tau} Z_s\cdot {\rm d}B_s\ \ {\rm weakly\ in}\ \mathbb{L}^p(\F_T)\ {\rm and \ then\ in}\ \mathbb{L}^1(\F_T).
\end{equation}
In the sequel, we define
$$
K^{l,q}_t:=Y_0-Y_t-\int_0^t h^{l,q}_s{\rm d}s-\int_0^t {\rm d}V_s-\int_0^t {\rm d}A_s+\int_0^t Z_s\cdot {\rm d}B_s, \ \ t\in\T.
$$
Then, in view of \eqref{eq:4-PenalizationForRBSDEwithSuperBarrier}, \eqref{eq:4-ConvergenceOfAnInSp}, \eqref{eq:4-WeaklyConvergenceOfGYnZn}, \eqref{eq:4-WeaklyConvergenceOfZn} and the fact that for each stopping time $\bar\tau$ taking values in $\T$, $Y^n_{\bar\tau}\uparrow Y_{\bar\tau}$ in $\mathbb{L}^1(\F_T)$, we can deduce that for every stopping time $\tau$ with $0\leq\tau\leq \sigma_{l,q}$, the sequence
$$
K_\tau^{n_j}=Y_0^{n_j}-Y_\tau^{n_j}-\int_0^\tau
g(s,Y_s^{n_j},Z_s^{n_j}){\rm d}s-\int_0^\tau{\rm d}V_s-\int_0^\tau {\rm d}A_s^{n_j}+\int_0^\tau Z_s^{n_j}{\rm d}B_s
$$
converges weakly to $K^{l,q}_\tau$ in $\mathbb{L}^1(\F_T)$ as $j\To \infty$. Thus, since $K_\cdot^n\in \vcal^{+,p}$ for each $n\geq 1$, we have
$$
K^{l,q}_{\tau_1\wedge \sigma_{l,q}}\leq K^{l,q}_{\tau_2\wedge \sigma_{l,q}}
$$
for any stopping times $\tau_1\leq \tau_2$ valued in $\T$. Furthermore, in view of the definition of $K_\cdot^{l,q}$ as well as the facts that $V_\cdot\in\vcal^p$, $A_\cdot\in\vcal^{+,p}$, $Y^n_\cdot\uparrow Y_\cdot$ and $Y^n_\cdot\in \s^p$ for each $n\geq 1$, it is not very hard to verify that $K^{l,q}_{\cdot}$ is a optional process with $\ps$ upper semi-continuous paths. Thus, by Lemma A.3 in \citet{BayraktarYao2015SPA} we know that $K^{l,q}_{\cdot\wedge \sigma_{l,q}}$ is nondecreasing, and should have $\ps$ right lower semi-continuous paths. Hence, $K^{l,q}_{\cdot\wedge \sigma_{l,q}}$ is c\`{a}dl\`{a}g and so is $Y_{\cdot\wedge \sigma_{l,q}}$ from the definition of $K^{l,q}_\cdot$. Finally, it follows from \eqref{eq:4-StabilityOfTauSigma} that $Y_\cdot$ is also a c\`{a}dl\`{a}g process.\vspace{0.2cm}

{\bf Step 2.}\ We show that $Y_t\geq L_t$ for each $t\in \T$ and as $n\To \infty$,
\begin{equation}
\label{eq:4-UniformConvergenceOfYnMinusL}
\sup\limits_{t\in\T}(Y^n_t-L_t)^-\To 0.
\end{equation}
In fact, it follows from \eqref{eq:4-ZnKnAnLeqEta} and the definition of $K^n_\cdot$ that for each $n\geq 1$,
$$\E\left[\left(\int_0^T(Y^n_t-L_t)^-{\rm d}t\right)^p\right]\leq {\sup_{n\geq 1}\E[|K_T^n|^p]\over n^p}.$$
Hence, by Fatou's lemma and H\"{o}lder's inequality, we have
$$
\E\left[\int_0^T(Y_t-L_t)^-{\rm d}t\right]\leq \liminf\limits_{n\To\infty}\E\left[\int_0^T(Y^n_t-L_t)^-{\rm d}t\right]\leq \lim\limits_{n\To\infty}{(\sup_{n\geq 1}\E[|K_T^n|^p])^{1\over p}\over n}=0,
$$
which implies that $$\E\left[\int_0^T(Y_t-L_t)^-{\rm d}t\right]=0.$$ Since $Y_\cdot-L_\cdot$ is a c\`{a}dl\`{a}g process, we get that $(Y_t-L_t)^-=0$ and hence
$Y_t\geq L_t$ for each $t\in [0,T)$. Moreover, note that $Y_T=Y^n_T=\xi\geq L_T$. Hence $$(Y^n_t-L_t)^-\downarrow 0$$
for each $t\in [0,T]$, and \eqref{eq:4-UniformConvergenceOfYnMinusL} follows by Dini's theorem. \vspace{0.2cm}

{\bf Step 3.}\ We show the convergence of the sequence $\{Y_\cdot^n\}$ in $\s^p$. Let $\tau_l$ and $\sigma_{l,q}$ be the sequences of stopping times defined in Step 1. For each $n,m\geq 1$, observe from \eqref{eq:4-PenalizationForRBSDEwithSuperBarrier} that
\begin{equation}
\label{eq:4-DefinitionOfBarYBarZBarV}
\begin{array}{lll}
\Dis (\bar Y_\cdot,\bar Z_\cdot,\bar V_\cdot)&
:=&\Dis (Y_\cdot^n-Y_\cdot^m,Z_\cdot^n-Z_\cdot^m,\\
&&\Dis \ \ \int_0^\cdot \left(g(s,Y_s^n,Z_s^n)-g(s,Y_s^m,Z_s^m)\right){\rm d}s+\left(K_\cdot^n-K_\cdot^m\right)
-\left(A_\cdot^n-A_\cdot^m\right))
\end{array}
\end{equation}
solves equation \eqref{eq:4-BarY=BarV}. It then follows from (ii) of \cref{lem:2-Lemma1} with $p=2$, $t=0$ and $\tau=\sigma_{l,q}$ that there exists a constant $C>0$ such that for each $n,m,l,q\geq 1$,
\begin{equation}\label{eq:4-BoundOfYnMinusYm}
\begin{array}{ll}
&\Dis\E\left[\sup\limits_{t\in [0,T]} |Y_{t\wedge\sigma_{l,q}}^n
-Y_{t\wedge\sigma_{l,q}}^m|^2\right]\\
\leq &\Dis C\E\left[|Y_{\sigma_{l,q}}^n-Y_{\sigma_{l,q}}^m|^2 +\sup\limits_{t\in [0,T]}\left(\int_{t\wedge \sigma_{l,q}}^{\sigma_{l,q}}(Y^n_s-Y_s^m)
\left({\rm d}K_s^n-{\rm d}K_s^m\right) \right)^+\right.\\
&\hspace{1cm}\Dis +\sup\limits_{t\in [0,T]}\left(\int_{t\wedge \sigma_{l,q}}^{\sigma_{l,q}}(Y^n_s-Y_s^m)
\left({\rm d}A_s^m-{\rm d}A_s^n\right) \right)^+\vspace{0.2cm}\\
&\hspace{1cm}\Dis +\left. \int_{0}^{\sigma_{l,q}}|Y^n_t-Y_t^m|
\left|g(t,Y_t^n,Z_t^n)-g(t,Y_t^m,Z_t^m)\right| {\rm d}t\right].
\end{array}
\end{equation}
Furthermore, in view of the definition of $K_\cdot^n$ and $A_\cdot^n$ with \eqref{eq:4-PenalizationForRBSDEwithSuperBarrier}, we deduce that for each $t\in \T$,
\begin{equation}\label{eq:4-YnYmTimesKnKmLeq}
\begin{array}{ll}
&\Dis \int_{t\wedge \sigma_{l,q}}^{\sigma_{l,q}}(Y^n_s-Y_s^m)
\left({\rm d}K_s^n-{\rm d}K_s^m\right)\vspace{0.1cm}\\
= &\Dis \int_{t\wedge \sigma_{l,q}}^{\sigma_{l,q}}\left[(Y^n_s-L_s)
-(Y_s^m-L_s)\right]
{\rm d}K_s^n-\int_{t\wedge \sigma_{l,q}}^{\sigma_{l,q}}\left[(Y^n_s-L_s)
-(Y_s^m-L_s)\right]
{\rm d}K_s^m\vspace{0.1cm}\\
\leq &\Dis \int_{t\wedge \sigma_{l,q}}^{\sigma_{l,q}}(Y_s^m-L_s)^-
{\rm d}K_s^n+\int_{t\wedge \sigma_{l,q}}^{\sigma_{l,q}}(Y_s^n-L_s)^-{\rm d}K_s^m\vspace{0.1cm}\\
\leq &\Dis \sup\limits_{t\in\T}(Y_{t\wedge\sigma_{l,q} }^m-L_{t\wedge\sigma_{l,q}})^-|K_T^n|+
\sup\limits_{t\in\T}(Y_{t\wedge\sigma_{l,q} }^n-L_{t\wedge\sigma_{l,q}})^-|K_T^m|
\end{array}
\end{equation}
and
\begin{equation}\label{eq:4-YnYmTimesAmAnLeq}
\begin{array}{lll}
\Dis \int_{t\wedge \sigma_{l,q}}^{\sigma_{l,q}}(Y^n_s-Y_s^m)
\left({\rm d}A_s^m-{\rm d}A_s^n\right)&
= & \Dis \int_{t\wedge \sigma_{l,q}}^{\sigma_{l,q}}\left[(U_s-Y_s^m)
-(U_s-Y_s^n)\right]\left({\rm d}A_s^m-{\rm d}A_s^n\right)\vspace{0.2cm}\\
&= & \Dis -\int_{t\wedge \sigma_{l,q}}^{\sigma_{l,q}}(U_s-Y_s^m)
{\rm d}A_s^n-\int_{t\wedge \sigma_{l,q}}^{\sigma_{l,q}}(U_s-Y_s^n)
{\rm d}A_s^m\\
&\leq & 0.
\end{array}
\end{equation}
Combining \eqref{eq:4-BoundOfgYnZn}, \eqref{eq:4-BoundOfYnMinusYm}, \eqref{eq:4-YnYmTimesKnKmLeq}, \eqref{eq:4-YnYmTimesAmAnLeq} and H\"{o}lder's inequality leads to that for each $n,m,l,q\geq 1$, \vspace{0.1cm}
$$
\hspace*{-4cm}\begin{array}{ll}
&\Dis\E\left[\sup\limits_{t\in [0,T]} |Y_{t\wedge\sigma_{l,q}}^n
-Y_{t\wedge\sigma_{l,q}}^m|^2\right]\vspace{0.1cm}\\
\leq &\Dis C\E\left[|Y_{\sigma_{l,q}}^n-Y_{\sigma_{l,q}}^m|^2 +2\int_0^T |Y^n_t-Y_t^m|\left(\mathbbm{1}_{t\leq \tau_l}f_t+\mathbbm{1}_{t\leq\sigma_{l,q}}
\psi_t(l)\right){\rm d}t\right]
\end{array}
$$
\begin{equation}
\label{eq:4-FurtherBoundOfYnMinusYm}
\begin{array}{ll}
&\Dis+C\left(\E\left[ \sup\limits_{t\in\T}\left|(Y_{t\wedge\sigma_{l,q} }^m-L_{t\wedge\sigma_{l,q}})^-\right|^{p\over p-1}\right]\right)^{p-1\over p}\left(\E\left[|K_T^n|^p\right]\right)^{1\over p}\vspace{0.1cm}\\
&\Dis+C\left(\E\left[ \sup\limits_{t\in\T}\left|(Y_{t\wedge\sigma_{l,q} }^n-L_{t\wedge\sigma_{l,q}})^-\right|^{p\over p-1}\right]\right)^{p-1\over p}\left(\E\left[|K_T^m|^p\right]\right)^{1\over p}\\
&\Dis+2C\lambda\left(\E\left[\left(\int_{0}^{
\sigma_{l,q}}|Y^n_t-Y_t^m|^2{\rm d}t\right)^{p\over 2(p-1)}\right]\right)^{p-1\over p}\times \left(\E\left[\left(\int_{0}^T\left(|Z_t^n|+
|Z_t^m|\right)^2{\rm d}t\right)^{p\over 2}\right]\right)^{1\over p}.\vspace{0.1cm}
\end{array}
\end{equation}
Thus, in view of the definitions of $\tau_l$ and $\sigma_{l,q}$, \eqref{eq:4-YnIncreaseBounded}, \eqref{eq:4-ZnKnAnLeqEta}, \eqref{eq:4-IntegralOfftAndPsitleqlq} and \eqref{eq:4-UniformConvergenceOfYnMinusL}, by \eqref{eq:4-FurtherBoundOfYnMinusYm} and Lebesgue's dominated convergence theorem we can deduce that for each $l,q\geq 1$, as $n,m\To\infty$,
$$\E\left[\sup\limits_{t\in [0,T]} |Y_{t\wedge\sigma_{l,q}}^n
-Y_{t\wedge\sigma_{l,q}}^m|^2\right]\To 0,\vspace{0.1cm}$$
which implies that for each $l,q\geq 1$, as $n,m\To\infty$,
$$\sup\limits_{t\in [0,T]} |Y_{t\wedge\sigma_{l,q}}^n
-Y_{t\wedge\sigma_{l,q}}^m|\To 0\ {\rm in\ probability}\ \mathbb{P}.$$
And, by \eqref{eq:4-YnIncreaseBounded} and  \eqref{eq:4-StabilityOfTauSigma} we get that \begin{equation}
\label{eq:4-UniformConvergenceOfYnInProb}
\sup\limits_{t\in [0,T]} |Y_t^n
-Y_t|\To 0,\ \ {\rm as}\ n\To\infty.
\end{equation}
So, $Y_\cdot$ is a continuous process. Finally, in view of \eqref{eq:4-YnIncreaseBounded} and \eqref{eq:4-UniformConvergenceOfYnInProb}, Lebesgue's dominated convergence theorem yields that
\begin{equation}
\label{eq:4-ConvergenceOfYnInSp}
\lim\limits_{n\To\infty}\|Y_\cdot^n-Y_\cdot\|_{\s^p}^p =\lim\limits_{n\To\infty}\E\left[\sup\limits_{t\in [0,T]} |Y_t^n-Y_t|^p\right]=0.\vspace{0.3cm}
\end{equation}

{\bf Step 4.}\ We show the convergence of the sequence $\{Z_\cdot^n\}$ in $\M^p$. Note that \eqref{eq:4-DefinitionOfBarYBarZBarV} verifies \eqref{eq:4-BarY=BarV}. It follows from (i) of \cref{lem:2-Lemma1} with $t=0$ and $\tau=T$ that there exists a positive constant $C'>0$ such that for each $m,n\geq 1$,
$$
\begin{array}{lll}
&&\Dis\E\left[\left(\int_0^T|Z_t^n-Z_t^m|^2{\rm d}t\right)^{p\over 2}\right]\vspace{0.1cm}\\
&\leq & \Dis C'\E\left[\sup\limits_{t\in [0,T]}|Y_t^n-Y_t^m|^p+\sup\limits_{t\in [0,T]}\left[\left(\int_t^T (Y_s^n-Y_s^m)\left({\rm d}K_s^n-{\rm d}K_s^m\right)\right)^+\right]^{p\over 2}\right] \\
&&\Dis +C'\E\left[\sup\limits_{t\in [0,T]}\left[\left(\int_t^T (Y_s^n-Y_s^m)\left({\rm d}A_s^m-{\rm d}A_s^n\right)\right)^+\right]^{p\over 2}\right]\vspace{0.1cm}\\
&&\Dis + C'\E\left[\left(\int_{0}^T|Y^n_t-Y_t^m|
\left|g(t,Y_t^n,Z_t^n)-g(t,Y_t^m,Z_t^m)\right| {\rm d}t\right)^{p\over 2}\right].
\end{array}
$$
Then, it follows from H\"{o}lder's inequality and \eqref{eq:4-YnYmTimesAmAnLeq} that for each $m,n\geq 1$,
$$
\hspace*{-6cm}\E\left[\left(\int_0^T|Z_t^n-Z_t^m|^2{\rm d}t\right)^{p\over 2}\right]
$$
$$
\begin{array}{ll}
\leq & \Dis C'\E\left[\sup\limits_{t\in [0,T]}|Y_t^n-Y_t^m|^p\right]+\left.C'\left(\E\left[\sup\limits_{t\in [0,T]}|Y_t^n-Y_t^m|^p\right]\right)^{1\over 2}\right\{\left(\E\left[|K_T^n|^p\right]\right)^{1\over 2}\\
&\Dis+\left.\left(\E\left[|K_T^m|^p\right]\right)^{1\over 2}+\left(\E\left[\left(\int_{0}^T
\left(|g(t,Y_t^n,Z_t^n)|+|g(t,Y_t^m,Z_t^m)|\right) {\rm d}t\right)^{p}\right]\right)^{1\over 2}\right\},
\end{array}
$$
from which together with \eqref{eq:4-ZnKnAnLeqEta}, \eqref{eq:4-ConvergenceOfYnInSp} and \eqref{eq:4-WeaklyConvergenceOfZn} leads to the existence of a process $Z_\cdot\in \M^p$ such that
\begin{equation}
\label{eq:4-ConvergenceOfZnInMp}
\lim\limits_{n\To\infty}\|Z_\cdot^n-Z_\cdot\|_{\M^p}^p
=\lim\limits_{n\To\infty}\E\left[\left(\int_0^T|Z_t^n
-Z_t|^2{\rm d}t\right)^{p\over 2}\right]=0.\vspace{0.2cm}
\end{equation}

{\bf Step 5.}\ We show the desired convergence of the sequence $\{K_\cdot^n\}$. Let $\tau_l$ and $\sigma_{l,q}$ be the sequences of stopping times defined in Step 1. Since $g$ satisfies \ref{A:(HH)}, by \eqref{eq:4-BoundOfgYnZn}, \eqref{eq:4-IntegralOfftAndPsitleqlq}, \eqref{eq:4-ZnKnAnLeqEta}, \eqref{eq:4-UniformConvergenceOfYnInProb} and \eqref{eq:4-ConvergenceOfZnInMp} we can deduce the existence of a subsequence $\{n_j\}$ of $\{n\}$ such that for each $l,q\geq 1$,
$$\lim\limits_{j\To \infty}\int_0^{\sigma_{l,q}}|g(t,Y_t^{n_j},Z_t^{n_j})-
g(t,Y_t,Z_t)|{\rm d}t=0.$$
Then, in view of \eqref{eq:4-StabilityOfTauSigma}, we have
\begin{equation}
\label{eq:4-UniformConvergenceOfgYnZn}
\lim\limits_{j\To \infty}\sup\limits_{t\in\T}\left|\int_0^t
g(t,Y_t^{n_j},Z_t^{n_j}){\rm d}t-\int_0^t
g(t,Y_t,Z_t){\rm d}t\right|=0.
\end{equation}
Combining \eqref{eq:4-ConvergenceOfAnInSp}, \eqref{eq:4-UniformConvergenceOfYnInProb}, \eqref{eq:4-ConvergenceOfZnInMp} and \eqref{eq:4-UniformConvergenceOfgYnZn} leads to that $\ps$, for each $t\in\T$,
$$
K_t^{n_j}=Y_0^{n_j}-Y_t^{n_j}-\int_0^t
g(s,Y_s^{n_j},Z_s^{n_j}){\rm d}s-\int_0^t{\rm d}V_s-A_t^{n_j}+\int_0^tZ_s^{n_j}{\rm d}B_s
$$
tends to
$$
K_t:=Y_0-Y_t-\int_0^t
g(s,Y_s,Z_s){\rm d}s-\int_0^t{\rm d}V_s-A_t+\int_0^tZ_s\cdot {\rm d}B_s
$$
as $j\To \infty$ and that
\begin{equation}
\label{eq:4-UniformConvergenceOfKnPas}
\lim\limits_{j\To\infty}\sup\limits_{t\in\T}
|K_t^{n_j}-K_t|=0.\vspace{0.2cm}
\end{equation}
Hence, $K_\cdot$ is a continuous process. Furthermore, by Fatou's lemma with \eqref{eq:4-UniformConvergenceOfKnPas} and \eqref{eq:4-ZnKnAnLeqEta} we get that
$$
\Dis\E\left[\sup\limits_{t\in\T}|K_t|^p\right]
=\E\left[\lim\limits_{j\To\infty}
\sup\limits_{t\in\T}|K_t^{n_j}|^p\right]\leq \Dis \liminf\limits_{j\To\infty}
\E\left[\sup\limits_{t\in\T}|K_t^{n_j}|^p\right]\leq \Dis \sup\limits_{n\geq 1}\E\left[|K_T^{n}|^p\right]<+\infty.
$$
Thus, $K_\cdot\in\vcal^{+,p}$.

\vspace{0.2cm}

{\bf Step 6.}\ We show that $(Y_\cdot,Z_\cdot, K_\cdot, A_\cdot)$ is a desired solution of DRBSDE $(\xi,g+{\rm d}V,L,U)$ in the space $\s^p\times\M^p\times\vcal^{+,p}\times\vcal^{+,p}$. In fact, note that $(Y_\cdot,Z_\cdot,K_\cdot,A_\cdot)\in \s^p\times\M^p\times\vcal^{+,p}\times\vcal^{+,p}$ solves
$$
Y_t=\xi+\int_t^Tg(s,Y_s,Z_s){\rm d}s+\int_t^T{\rm d}V_s+\int_t^T{\rm d}K_s-\int_t^T{\rm d}A_s-\int_t^TZ_s\cdot {\rm d}B_s,\ \ t\in\T.
$$
It follows from Step 2 that $Y_t\geq L_t$ for each $t\in\T$, and then
$$
\int_0^T(Y_t-L_t){\rm d}K_t\geq 0.
$$
And, in view of \eqref{eq:4-UniformConvergenceOfYnInProb} and \eqref{eq:4-UniformConvergenceOfKnPas}, it follows from the definition of $K_\cdot^n$ that
$$
\int_0^T(Y_t-L_t){\rm d}K_t=\lim\limits_{j\To \infty}\int_0^T(Y_t^{n_j}-L_t){\rm d}K_t^{n_j}\leq 0.
$$
Consequently,
$$
\int_0^T(Y_t-L_t){\rm d}K_t=0.
$$
Furthermore, in view of the fact that $Y^n_\cdot\leq U_\cdot$ and $\int_0^T (U_t-Y^n_t)\ {\rm d}A^n_t=0$ for each $n\geq 1$, from \eqref{eq:4-ConvergenceOfYnInSp} and \eqref{eq:4-ConvergenceOfAnInSp} we derive that $Y_t\leq U_t$ for each $n\geq 1$, and
$$
\int_0^T (U_t-Y_t)\ {\rm d}A_t=\Lim \int_0^T (U_t-Y^n_t)\ {\rm d}A^n_t=0.
$$
Finally, let us verify that ${\rm d}K\bot{\rm d}A$. In fact, for each $n\geq 1$, we define the following progressively measurable set
$$D_n:=\{(\omega,t)\subset \Omega\times\T:\ Y^n_t(\omega)\geq L_t(\omega)\}.$$
It then follows from the definition of $K^n_\cdot$ that for each $n\geq 1$,
$$\E\left[\int_0^T \mathbbm{1}_{D_n} {\rm d}K^n_t\right]=0,\vspace{-0.1cm}$$
and, in view of $\int_0^T (U_t-Y^n_t){\rm d}A^n_t=0$,
$$
\E\left[\int_0^T \mathbbm{1}_{D_n^c}{\rm d}A^n_t\right]=\E\left[\int_0^T \mathbbm{1}_{\{Y^n_t<L_t\leq U_t\}}
|U_t-Y^n_t|^{-1}
(U_t-Y^n_t)\ {\rm d}A^n_t\right]=0.
$$
Thus, noticing that $D_n\subset D_{n+1}$ for each $n\geq1$ due to $Y^n_\cdot\leq Y^{n+1}_\cdot$, by \eqref{eq:4-UniformConvergenceOfKnPas} and \eqref{eq:4-ConvergenceOfAnInSp} we have
$$
\E\left[\int_0^T\mathbbm{1}_{\cup D_n} {\rm d}K_t\right]=\lim\limits_{j\To\infty} \E\left[\int_0^T
\mathbbm{1}_{D_{n_j}} {\rm d}K^{n_j}_t\right]=0
$$
and
$$
\E\left[\int_0^T\mathbbm{1}_{\cap D_n^c} {\rm d}A_t\right]=\Lim\E\left[\int_0^T
\mathbbm{1}_{D_n^c}{\rm d}A^n_t\right]=0.\vspace{0.2cm}
$$
Hence, ${\rm d}K\bot{\rm d}A$. The proof of \cref{pro:4-Penalization} is then complete.\vspace{0.3cm}

{\bf Complementary of the details for the proof of \cref{pro:5-Approximation}}. Now, we will detail the proof of steps 1-3 after the inequality \eqref{eq:5-FtandPhitLeqlq}. \vspace{0.2cm}

{\bf Step 1.}\ We show the convergence of the sequence $\{Y_\cdot^n\}$ in $\s^p$. For each $n,m\geq 1$, observe that
\begin{equation}
\label{eq:5-DefinitionOfBarYZV}
\begin{array}{lll}
\Dis (\bar Y_\cdot,\bar Z_\cdot,\bar V_\cdot)&
:=&\Dis (Y_\cdot^n-Y_\cdot^m,Z_\cdot^n-Z_\cdot^m,\\
&&\Dis \ \ \int_0^\cdot \left(g_n(s,Y_s^n,Z_s^n)-g_m(s,Y_s^m,Z_s^m)\right){\rm d}s+\left(K_\cdot^n-K_\cdot^m\right)
+\left(A_\cdot^m-A_\cdot^n\right) )
\end{array}
\end{equation}
solves equation \eqref{eq:4-BarY=BarV}. It then follows from (ii) of \cref{lem:2-Lemma1} with $p=2$, $t=0$ and $\tau=\sigma_{l,q}$ that there exists a constant $C>0$ such that for each $n,m,l,q\geq 1$,
$$
\hspace*{-4.5cm}\Dis\E\left[\sup\limits_{t\in [0,T]} |Y_{t\wedge\sigma_{l,q}}^n
-Y_{t\wedge\sigma_{l,q}}^m|^2\right]
$$
\begin{equation}
\label{eq:5-FirstBoundOfYnMinusYm}
\begin{array}{ll}
\leq &\Dis C\E\left[|Y_{\sigma_{l,q}}^n-Y_{\sigma_{l,q}}^m|^2 +\sup\limits_{t\in [0,T]}\left(\int_{t\wedge \sigma_{l,q}}^{\sigma_{l,q}}(Y^n_s-Y_s^m)
\left({\rm d}K_s^n-{\rm d}K_s^m\right) \right)^+\right.\vspace{0.1cm}\\
&\hspace{1cm}\Dis +\sup\limits_{t\in [0,T]}\left(\int_{t\wedge \sigma_{l,q}}^{\sigma_{l,q}}(Y^n_s-Y_s^m)
\left({\rm d}A_s^m-{\rm d}A_s^n\right) \right)^+\vspace{0.2cm}\\
&\hspace{1cm}\Dis +\left. \int_{0}^{\sigma_{l,q}}|Y^n_t-Y_t^m|
\left|g_n(t,Y_t^n,Z_t^n)-g_m(t,Y_t^m,Z_t^m)\right| {\rm d}t\right].
\end{array}
\end{equation}
Furthermore, in view of the facts that $L_t\leq Y_t^n\leq U_t$ for each $t\in\T$ and $n\geq 1$, and $\int_0^T(Y_t^n-L_t){\rm d}K_t^n=\int_0^T( U_t-Y_t^n){\rm d}A_t^n=0$ for each $n\geq 1$, we have that for each $t\in \T$ and $l,q,m,n\geq 1$,
\begin{equation}
\label{eq:5-YnYmKnKmLeq0}
\begin{array}{lll}
\Dis \int_{t\wedge \sigma_{l,q}}^{\sigma_{l,q}}(Y^n_s-Y_s^m)
\left({\rm d}K_s^n-{\rm d}K_s^m\right)&
= & \Dis \int_{t\wedge \sigma_{l,q}}^{\sigma_{l,q}}\left[(Y_s^n-L_s)
-(Y_s^m-L_s)\right]\left({\rm d}K_s^n-{\rm d}K_s^m\right)\vspace{0.2cm}\\
&= & \Dis -\int_{t\wedge \sigma_{l,q}}^{\sigma_{l,q}}(Y_s^n-L_s)
{\rm d}K_s^m-\int_{t\wedge \sigma_{l,q}}^{\sigma_{l,q}}(Y_s^m-L_s)
{\rm d}K_s^n\\
&\leq & 0
\end{array}
\end{equation}
and
\begin{equation}
\label{eq:5-YnYmAnAmLeq0}
\begin{array}{lll}
\Dis \int_{t\wedge \sigma_{l,q}}^{\sigma_{l,q}}(Y^n_s-Y_s^m)
\left({\rm d}A_s^m-{\rm d}A_s^n\right)&
= & \Dis \int_{t\wedge \sigma_{l,q}}^{\sigma_{l,q}}\left[(U_s-Y_s^m)
-(U_s-Y_s^n)\right]\left({\rm d}A_s^m-{\rm d}A_s^n\right)\vspace{0.2cm}\\
&= & \Dis -\int_{t\wedge \sigma_{l,q}}^{\sigma_{l,q}}(U_s-Y_s^m)
{\rm d}A_s^n-\int_{t\wedge \sigma_{l,q}}^{\sigma_{l,q}}(U_s-Y_s^n)
{\rm d}A_s^m\\
&\leq & 0.
\end{array}
\end{equation}
Combining \eqref{eq:5-BoundOfgnYnZnWithStoppingtime}, \eqref{eq:5-FirstBoundOfYnMinusYm},  \eqref{eq:5-YnYmKnKmLeq0} and \eqref{eq:5-YnYmAnAmLeq0} together with H\"{o}lder's inequality leads to that
\begin{equation}
\label{eq:5-SecondBoundOfYnMinusYm}
\begin{array}{ll}
&\Dis\E\left[\sup\limits_{t\in [0,T]} |Y_{t\wedge\sigma_{l,q}}^n
-Y_{t\wedge\sigma_{l,q}}^m|^2\right]\vspace{0.1cm}\\
\leq &\Dis C\E\left[|Y_{\sigma_{l,q}}^n-Y_{\sigma_{l,q}}^m|^2 +2\int_0^T |Y^n_t-Y_t^m|\left(\mathbbm{1}_{t\leq \tau_l}f_t+\mathbbm{1}_{t\leq\sigma_{l,q}}
\psi_t(l)\right){\rm d}t\right]\vspace{0.1cm}\\
&\Dis+2C\lambda\left(\E\left[\left(\int_{0}^{
\sigma_{l,q}}|Y^n_t-Y_t^m|^2{\rm d}t\right)^{p\over 2(p-1)}\right]\right)^{p-1\over p}\times \left(\E\left[\left(\int_{0}^T\left(|Z_t^n|+
|Z_t^m|\right)^2{\rm d}t\right)^{p\over 2}\right]\right)^{1\over p}.
\end{array}
\end{equation}
Thus, in view of the definitions of $\tau_l$ and $\sigma_{l,q}$, \eqref{eq:5-YnIncreaseBounded}, \eqref{eq:5-BoundofZnKnAnGn} and \eqref{eq:5-FtandPhitLeqlq}, it follows from \eqref{eq:5-SecondBoundOfYnMinusYm} and Lebesgue's dominated convergence theorem that for each $l,q\geq 1$, as $n,m\To\infty$,
$$\E\left[\sup\limits_{t\in [0,T]} |Y_{t\wedge\sigma_{l,q}}^n
-Y_{t\wedge\sigma_{l,q}}^m|^2\right]\To 0,$$
which implies that for each $l,q\geq 1$, as $n,m\To\infty$,
$$\sup\limits_{t\in [0,T]} |Y_{t\wedge\sigma_{l,q}}^n
-Y_{t\wedge\sigma_{l,q}}^m|\To 0\ {\rm in\ probability}\ \mathbb{P}.$$
And, in view of \eqref{eq:5-StabilityOftauandSigma} and \eqref{eq:5-YnIncreaseBounded}, we have
\begin{equation}
\label{eq:5-UniformConvergenceOfYnPas}
\sup\limits_{t\in [0,T]} |Y_t^n
-Y_t|\To 0,\ \ {\rm as}\ n\To\infty.
\end{equation}
So, $Y_\cdot$ is a continuous process. Finally, in view of \eqref{eq:5-UniformConvergenceOfYnPas} and \eqref{eq:5-YnIncreaseBounded}, Lebesgue's dominated convergence theorem yields that
\begin{equation}
\label{eq:5-ConvergenceOfYnInSp}
\lim\limits_{n\To\infty}\|Y_\cdot^n-Y_\cdot\|_{\s^p}^p =\lim\limits_{n\To\infty}\E\left[\sup\limits_{t\in [0,T]} |Y_t^n-Y_t|^p\right]=0.\vspace{0.2cm}
\end{equation}

{\bf Step 2.}\ We show the convergence of the sequence $\{Z_\cdot^n\}$ in $\M^p$. Note that \eqref{eq:5-DefinitionOfBarYZV} verifies \eqref{eq:4-BarY=BarV}. It follows from (i) of \cref{lem:2-Lemma1} with $t=0$ and $\tau=T$ that there exists a positive constant $C'>0$ such that for each $m,n\geq 1$,
$$
\begin{array}{ll}
&\Dis\E\left[\left(\int_0^T|Z_t^n-Z_t^m|^2{\rm d}t\right)^{p\over 2}\right]\vspace{0.1cm}\\
\leq & \Dis C'\E\left[\sup\limits_{t\in [0,T]}|Y_t^n-Y_t^m|^p+\sup\limits_{t\in [0,T]}\left[\left(\int_t^T (Y_s^n-Y_s^m)\left({\rm d}K_s^n-{\rm d}K_s^m\right)\right)^+\right]^{p\over 2}\right]\\
&\Dis +C'\E\left[\sup\limits_{t\in [0,T]}\left[\left(\int_t^T (Y_s^n-Y_s^m)\left({\rm d}A_s^m-{\rm d}A_s^n\right)\right)^+\right]^{p\over 2}\right]\vspace{0.1cm}\\
&\Dis + C'\E\left[\left(\int_{0}^T|Y^n_t-Y_t^m|
\left|g_n(t,Y_t^n,Z_t^n)-g_m(t,Y_t^m,Z_t^m)\right| {\rm d}t\right)^{p\over 2}\right].
\end{array}
$$
Then, in view of \eqref{eq:5-YnYmKnKmLeq0} and \eqref{eq:5-YnYmAnAmLeq0}, it follows from H\"{o}lder's inequality that for each $m,n\geq 1$,
$$
\begin{array}{lll}
\Dis\E\left[\left(\int_0^T|Z_t^n-Z_t^m|^2{\rm d}t\right)^{p\over 2}\right]
&\leq &\Dis C'\E\left[\sup\limits_{t\in [0,T]}|Y_t^n-Y_t^m|^p\right]+C'\left(\E\left[\sup\limits_{t\in [0,T]}|Y_t^n-Y_t^m|^p\right]\right)^{1\over 2}\\
&&\Dis\ \ \ \ \  \cdot\left(\E\left[\left(\int_{0}^T
\left(|g_n(t,Y_t^n,Z_t^n)|+|g_m(t,Y_t^m,Z_t^m)|\right) {\rm d}t\right)^{p}\right]\right)^{1\over 2},
\end{array}
$$
from which together with \eqref{eq:5-ConvergenceOfYnInSp} and \eqref{eq:5-BoundofZnKnAnGn} yields the existence of a process $Z_\cdot\in\M^p$ such that
\begin{equation}
\label{eq:5-ConvergenceOfZnInMp}
\lim\limits_{n\To\infty}\|Z_\cdot^n-Z_\cdot\|_{\M^p}^p
=\lim\limits_{n\To\infty}\E\left[\left(\int_0^T|Z_t^n
-Z_t|^2{\rm d}t\right)^{p\over 2}\right]=0.\vspace{0.2cm}
\end{equation}

{\bf Step 3.}\ We show that $(Y_\cdot,Z_\cdot,K_\cdot,A_\cdot)$ is a solution of DRBSDE $(\xi,g+{\rm d}V,L,U)$ in the space $\s^p\times\M^p\times\vcal^{+,p}\times\vcal^{+,p}$. By \eqref{eq:5-LocalUniformConvergenceofgnFromleft}-\eqref{eq:5-YnIncreaseBounded}, \eqref{eq:5-ConvergenceOfZnInMp} and \eqref{eq:5-BoundOfgnYnZnWithStoppingtime}-\eqref{eq:5-FtandPhitLeqlq} we derive the existence of a subsequence $\{n_j\}$ of $\{n\}$ such that for each $l,q\geq 1$,
$$\lim\limits_{j\To \infty}\int_0^{\sigma_{l,q}}|g_{n_j}(t,Y_t^{n_j},Z_t^{n_j})-
g(t,Y_t,Z_t)|{\rm d}t=0.$$
Then, in view of \eqref{eq:5-StabilityOftauandSigma}, we have
\begin{equation}
\label{eq:5-ConvergenceOfgn}
\lim\limits_{j\To \infty}\sup\limits_{t\in\T}\left|\int_0^t
g_{n_j}(t,Y_t^{n_j},Z_t^{n_j}){\rm d}t-\int_0^t
g(t,Y_t,Z_t){\rm d}t\right|=0.\vspace{0.1cm}
\end{equation}
Combining \eqref{eq:5-ConvergenceOfKnandAnInSp}, \eqref{eq:5-UniformConvergenceOfYnPas}, \eqref{eq:5-ConvergenceOfZnInMp} and \eqref{eq:5-ConvergenceOfgn} leads to that\vspace{0.1cm}
$$
Y_t=\xi+\int_t^Tg(s,Y_s,Z_s){\rm d}s+\int_t^T{\rm d}V_s+\int_t^T{\rm d}K_s-\int_t^T{\rm d}A_s-\int_t^TZ_s\cdot {\rm d}B_s,\ \ t\in\T.\vspace{0.1cm}
$$
Since $L_t\leq Y_t^n\leq U_t,\ n\geq 1$ and $Y_t^n\uparrow Y_t$ for each $t\in\T$, we know that $L_t \leq Y_t\leq U_t$ for each $t\in\T$. Furthermore, in view of \eqref{eq:5-ConvergenceOfYnInSp} and \eqref{eq:5-ConvergenceOfKnandAnInSp}, we have
$$
\int_0^T(Y_t-L_t){\rm d}K_t=\Lim\int_0^T(Y_t^{n}-L_t){\rm d}K_t^n=0
$$
and
$$
\int_0^T(U_t-Y_t){\rm d}A_t=\Lim\int_0^T(U_t-Y_t^{n}){\rm d}A_t^n=0.\vspace{0.2cm}
$$
Finally, let us show that ${\rm d}K\bot {\rm d}A$. In fact, for each $n\geq 1$, since ${\rm d}K^n\bot {\rm d}A^n$, we know that there exists a progressively measurable set
$D_n\subset \Omega\times\T$ such that
$$
\E\left[\int_0^T \mathbbm{1}_{D_n} {\rm d}K^n_t\right]=\E\left[\int_0^T \mathbbm{1}_{D_n^c} {\rm d}A^n_t\right]=0.
$$
Then, in view of \eqref{eq:5-ConvergenceOfKnandAnInSp} and the fact that ${\rm d}K\leq {\rm d}K^n$ for each $n\geq 1$, we have\vspace{0.1cm}
$$
0\leq \E\left[\int_0^T\mathbbm{1}_{\cup D_n} {\rm d}K_t\right]\leq \sum\limits_{n=1}^\infty \E\left[\int_0^T \mathbbm{1}_{D_n} {\rm d}K_t\right]\leq \sum\limits_{n=1}^\infty \E\left[\int_0^T\mathbbm{1}_{D_n} {\rm d}K^n_t\right]=0\vspace{0.1cm}
$$
and\vspace{0.1cm}
$$
0\leq \E\left[\int_0^T\mathbbm{1}_{\cap D_n^c} {\rm d}A_t\right]=\lim\limits_{m\To\infty}\E\left[\int_0^T
\mathbbm{1}_{\cap D_n^c}{\rm d}A^m_t\right]\leq \lim\limits_{m\To\infty}\E\left[\int_0^T
\mathbbm{1}_{D_m^c}{\rm d}A^m_t\right]=0.\vspace{0.2cm}
$$
Hence, ${\rm d}K\bot{\rm d}A$. The proof of \cref{pro:5-Approximation} is then complete.


\setlength{\bibsep}{2pt}

\begin{thebibliography}{57}
\expandafter\ifx\csname natexlab\endcsname\relax\def\natexlab#1{#1}\fi
\expandafter\ifx\csname url\endcsname\relax
  \def\url#1{\texttt{#1}}\fi
\expandafter\ifx\csname urlprefix\endcsname\relax\def\urlprefix{URL }\fi

\bibitem[{Aman(2009)}]{Aman2009ROSE}
Aman, A., 2009. {$L\sp p$}-solution of reflected generalized {BSDE}s with
  non-{L}ipschitz coefficients. Random Oper. Stoch. Equ. 17~(3), 201--219.

\bibitem[{Bayraktar and Yao(2012)}]{BayraktarYao2012SPA}
Bayraktar, E., Yao, S., 2012. Quadratic reflected {BSDE}s with unbounded
  obstacles. Stochastic Process. Appl. 122~(4), 1155--1203.

\bibitem[{Bayraktar and Yao(2015)}]{BayraktarYao2015SPA}
Bayraktar, E., Yao, S., 2015. Doubly reflected {BSDE}s with integrable
  parameters and related {D}ynkin games. Stochastic Process. Appl. 125~(12),
  4489--4542.

\bibitem[{Bismut(1973)}]{Bismut1973JMAA}
Bismut, J., 1973. Conjugate convex functions in optimal stochastic control. J.
  Math. Anal. Appl. 44, 384--404.

\bibitem[{Briand and Carmona(2000)}]{BriandCarmona2000JAMSA}
Briand, P., Carmona, R., 2000. {BSDE}s with polynomial growth generators.
  Journal of Applied Mathematics and Stochastic Analysis 13~(3), 207--238.

\bibitem[{Briand et~al.(2003)Briand, Delyon, Hu, Pardoux, and
  Stoica}]{BriandDelyonHu2003SPA}
Briand, P., Delyon, B., Hu, Y., Pardoux, E., Stoica, L., 2003. {$L^p$}
  solutions of backward stochastic differential equations. Stochastic Process.
  Appl. 108~(1), 109--129.

\bibitem[{Briand et~al.(2007)Briand, Lepeltier, and
  San~Martin}]{BriandLepetierSanMrtin2007Bernoulli}
Briand, P., Lepeltier, J.-P., San~Martin, J., 2007. One-dimensional backward
  stochastic differential equations whose coefficient is monotonic in $y$ and
  non-{L}ipschitz in $z$. Bernoulli 13~(1), 80--91.

\bibitem[{Cvitani{\'c} and Karatzas(1996)}]{CvitanicKaratzas1996AoP}
Cvitani{\'c}, J., Karatzas, I., 1996. Backward stochastic differential
  equations with reflection and {D}ynkin games. Ann. Probab. 24~(4),
  2024--2056.

\bibitem[{El~Asri et~al.(2011)El~Asri, Hamad{\`e}ne, and
  Wang}]{ElAsriHamadeneWang2011SAA}
El~Asri, B., Hamad{\`e}ne, S., Wang, H., 2011. ${L}^p$-solutions for doubly
  reflected backward stochastic differential equations. Stoch. Anal. Appl.
  29~(6), 907--932.

\bibitem[{El~Karoui et~al.(1997{\natexlab{a}})El~Karoui, Kapoudjian, Pardoux,
  Peng, and Quenez}]{ElKarouiKapoudjianPardouxPengQuenez1997AoP}
El~Karoui, N., Kapoudjian, C., Pardoux, E., Peng, S., Quenez, M.~C.,
  1997{\natexlab{a}}. Reflected solutions of backward {SDE}'s, and related
  obstacle problems for {PDE}'s. Ann. Probab. 25~(2), 702--737.

\bibitem[{El~Karoui et~al.(1997{\natexlab{b}})El~Karoui, Pardoux, and
  Quenez}]{ElKarouiPardouxQuenez1997NMIF}
El~Karoui, N., Pardoux, E., Quenez, M.~C., 1997{\natexlab{b}}. Reflected
  backward {SDE}s and {A}merican options. In: Numerical methods in finance.
  Vol.~13 of Publ. Newton Inst. Cambridge Univ. Press, Cambridge, pp. 215--231.

\bibitem[{El~Karoui et~al.(1997{\natexlab{c}})El~Karoui, Peng, and
  Quenez}]{ElKarouiPengQuenez1997MF}
El~Karoui, N., Peng, S., Quenez, M.~C., 1997{\natexlab{c}}. Backward stochastic
  differential equations in finance. Math. Finance 7~(1), 1--71.

\bibitem[{Essaky and Hassani(2011)}]{EssakyHassani2011BSM}
Essaky, E.~H., Hassani, M., 2011. General existence results for reflected
  {BSDE} and {BSDE}. Bull. Sci. Math. 135~(5), 442--466.

\bibitem[{Essaky and Hassani(2013)}]{EssakyHassani2013JDE}
Essaky, E.~H., Hassani, M., 2013. Generalized {BSDE} with 2-reflecting barriers
  and stochastic quadratic growth. J. Differential Equations 254~(3),
  1500--1528.

\bibitem[{Fan(2015)}]{Fan2015JMAA}
Fan, S., 2015. ${L}^p$ solutions of multidimensional {BSDE}s with weak
  monotonicity and general growth generators. J. Math. Anal. Appl. 432,
  156--178.

\bibitem[{Fan(2017)}]{Fan2017AMS}
Fan, S., 2017. Existence, uniqueness and approximation for ${L}^p$ solutions of
  reflected {BSDE}s under weaker assumptions. Acta Mathematica
  Sinica, English Series 33, 807--838.


\bibitem[{Fan and Jiang(2010)}]{FanJiang2010CRA}
Fan, S., Jiang, L., 2010. Uniqueness result for the {BSDE} whose generator is
  monotonic in {$y$} and uniformly continuous in {$z$}. C. R. Math. Acad. Sci.
  Paris 348~(1-2), 89--92.

\bibitem[{Fan and Jiang(2012{\natexlab{a}})}]{FanJiang2012JoTP}
Fan, S., Jiang, L., 2012{\natexlab{a}}. A generalized comparison theorem for
  {BSDE}s and its applications. J. Theoret. Probab. 25~(1), 50--61.

\bibitem[{Fan and Jiang(2012{\natexlab{b}})}]{FanJiang2012SPL}
Fan, S., Jiang, L., 2012{\natexlab{b}}. One-dimensional {BSDE}s with
  left-continuous, lower semi-continuous and linear-growth generators. Statist.
  Probab. Lett. 82, 1792--1798.

\bibitem[{Fan et~al.(2013)Fan, Jiang, and Davison}]{FanJiangDavison2013FMC}
Fan, S., Jiang, L., Davison, M., 2013. Existence and uniqueness result for
  multidimensional {BSDE}s with generators of {O}sgood type. Front. Math. China
  8~(4), 811--824.

\bibitem[{Hamad{\`e}ne(2002)}]{Hamadene2002Stochastics}
Hamad{\`e}ne, S., 2002. Reflected {BSDE}'s with discontinuous barrier and
  application. Stoch. Stoch. Rep. 74~(3-4), 571--596.

\bibitem[{Hamad{\`e}ne and Hassani(2005)}]{HamadeneHassani2005PTRF}
Hamad{\`e}ne, S., Hassani, M., 2005. B{SDE}s with two reflecting barriers: the
  general result. Probab. Theory Related Fields 132~(2), 237--264.

\bibitem[{Hamad{\`e}ne et~al.(2010)Hamad{\`e}ne, Hassani, and
  Ouknine}]{HamadeneHassaniOuknine2010BSM}
Hamad{\`e}ne, S., Hassani, M., Ouknine, Y., 2010. Backward {SDE}s with two
  {$rcll$} reflecting barriers without {M}okobodski's hypothesis. Bull. Sci.
  Math. 134~(8), 874--899.

\bibitem[{Hamad{\`e}ne and Hdhiri(2006)}]{HamadeneHdhiri2006JAMSA}
Hamad{\`e}ne, S., Hdhiri, I., 2006. Backward stochastic differential equations
  with two distinct reflecting barriers and quadratic growth generator. J.
  Appl. Math. Stoch. Anal., Art. ID 95818, 28.

\bibitem[{Hamad{\`e}ne and Lepeltier(2000)}]{HamadeneLepeltier2000SPA}
Hamad{\`e}ne, S., Lepeltier, J.-P., 2000. Reflected {BSDE}s and mixed game
  problem. Stochastic Process. Appl. 85~(2), 177--188.

\bibitem[{Hamad{\`e}ne et~al.(1997)Hamad{\`e}ne, Lepeltier, and
  Matoussi}]{HamadeneLepeltierMatoussi1997BSDEs}
Hamad{\`e}ne, S., Lepeltier, J.-P., Matoussi, A., 1997. Double barrier backward
  {SDE}s with continuous coefficient. In: Backward stochastic differential
  equations ({P}aris, 1995--1996). Vol. 364 of Pitman Res. Notes Math. Ser.
  Longman, Harlow, pp. 161--175.

\bibitem[{Hamad{\`e}ne et~al.(1999)Hamad{\`e}ne, Lepeltier, and
  Wu}]{HamadeneLepeltierWu1999PMS}
Hamad{\`e}ne, S., Lepeltier, J.-P., Wu, Z., 1999. Infinite horizon reflected
  backward stochastic differential equations and applications in mixed control
  and game problems. Probab. Math. Statist. 19~(2, Acta Univ. Wratislav. No.
  2198), 211--234.

\bibitem[{Hamad{\`e}ne and Popier(2012)}]{HamadenePopier2012SD}
Hamad{\`e}ne, S., Popier, A., 2012. {$L\sp p$}-solutions for reflected backward
  stochastic differential equations. Stoch. Dyn. 12~(2), 1150016, 35.

\bibitem[{Hamad{\`e}ne and Zhang(2010)}]{HamadeneZhang2010SPA}
Hamad{\`e}ne, S., Zhang, J., 2010. Switching problem and related system of
  reflected backward {SDE}s. Stochastic Process. Appl. 120~(4), 403--426.

\bibitem[{Hu and Tang(2010)}]{HuTang2010PTRF}
Hu, Y., Tang, S., 2010. Multi-dimensional {BSDE} with oblique reflection and
  optimal switching. Probab. Theory Related Fields 147~(1-2), 89--121.

\bibitem[{Hu and Tang(2016)}]{HuTang2016SPA}
Hu, Y., Tang, S., 2016. Multi-dimensional backward stochastic differential
  equations of diagonally quadratic generators. Stochastic Process. Appl.
  126~(4), 1066--1086.

\bibitem[{Hua et~al.(2013)Hua, Jiang, and Shi}]{HuaJiangShi2013JKSS}
Hua, W., Jiang, L., Shi, X., 2013. Infinite time interval {RBSDE}s with
  non-{L}ipschitz coefficients. J. Korean Statist. Soc. 42~(2), 247--256.

\bibitem[{Izumi(2013)}]{Izumi2013SPL}
Izumi, Y., 2013. The {$L\sp p$} {C}auchy sequence for one-dimensional {BSDE}s
  with linear growth generators. Statist. Probab. Lett. 83~(6), 1588--1594.

\bibitem[{Jia(2008)}]{Jia2008CRA}
Jia, G., 2008. A uniqueness theorem for the solution of backward stochastic
  differential equations. Comptes Rendus de l'Acad{\'e}mie des Sciences-Series
  I-Mathematics 346~(7-8), 439--444, comptes Rendus Mathematique.

\bibitem[{Jia(2010)}]{Jia2010SPA}
Jia, G., 2010. Backward stochastic differential equations with a uniformly
  continuous generator and related $g$-expectation. Stochastic Process. Appl.
  120~(11), 2241--2257.

\bibitem[{Jia and Xu(2014)}]{JiaXu2014arXiv}
Jia, G., Xu, M., 2014. Construction and uniqueness for reflected {BSDE} under
  linear increasing condition.
  \href{http://arxiv.org/abs/0801.3718v1}{arXiv:0801.3718v1 [math.SG]}.

\bibitem[{Klimsiak(2012)}]{Klimsiak2012EJP}
Klimsiak, T., 2012. Reflected {BSDE}s with monotone generator. Electron. J.
  Probab. 17, no. 107, 25.

\bibitem[{Klimsiak(2013)}]{Klimsiak2013BSM}
Klimsiak, T., 2013. {BSDE}s with monotone generator and two irregular
  reflecting barriers. Bulletin des Sciences Math{\'e}matiques 137~(3),
  268--321.

\bibitem[{Kobylanski et~al.(2002)Kobylanski, Lepeltier, Quenez, and
  Torres}]{KobylanskiLepeltierQuenezTorres2002PMS}
Kobylanski, M., Lepeltier, J.~P., Quenez, M.~C., Torres, S., 2002. Reflected
  {BSDE} with superlinear quadratic coefficient. Probab. Math. Statist. 22~(1,
  Acta Univ. Wratislav. No. 2409), 51--83.

\bibitem[{Lepeltier et~al.(2005)Lepeltier, Matoussi, and
  Xu}]{LepeltierMatoussiXu2005AdvinAP}
Lepeltier, J.-P., Matoussi, A., Xu, M., 2005. Reflected backward stochastic
  differential equations under monotonicity and general increasing growth
  conditions. Adv. in Appl. Probab. 37~(1), 134--159.

\bibitem[{Lepeltier and San~Mart{\'{\i}}n(2004)}]{LepeltierSanMartin2004JAP}
Lepeltier, J.-P., San~Mart{\'{\i}}n, J., 2004. Backward {SDE}s with two
  barriers and continuous coefficient: an existence result. J. Appl. Probab.
  41~(1), 162--175.

\bibitem[{Lepeltier and Xu(2005)}]{LepeltierXu2005SPL}
Lepeltier, J.-P., Xu, M., 2005. Penalization method for reflected backward
  stochastic differential equations with one r.c.l.l. barrier. Statist. Probab.
  Lett. 75~(1), 58--66.

\bibitem[{Ma and Zhang(2005)}]{MaZhang2005SPA}
Ma, J., Zhang, J., 2005. Representations and regularities for solutions to
  {BSDE}s with reflections. Stochastic Process. Appl. 115~(4), 539--569.

\bibitem[{Ma et~al.(2013)Ma, Fan, and Song}]{MaFanSong2013BSM}
Ma, M., Fan, S.~J., Song, X., 2013. {$L\sp p$} {$(p>1)$} solutions of backward
  stochastic differential equations with monotonic and uniformly continuous
  generators. Bull. Sci. Math. 137~(2), 97--106.

\bibitem[{Mao(1995)}]{Mao1995SPA}
Mao, X., 1995. Adapted solutions of backward stochastic differential equations
  with non-{L}ipschitz coefficients. Stochastic Processes and their
  Applications 58~(2), 281--292.

\bibitem[{Matoussi(1997)}]{Matoussi1997SPL}
Matoussi, A., 1997. Reflected solutions of backward stochastic differential
  equations with continuous coefficient. Statist. Probab. Lett. 34~(4),
  347--354.

\bibitem[{Pardoux(1999)}]{Pardoux1999NADEC}
Pardoux, E., 1999. {BSDE}s, weak convergence and homogenization of semilinear
  {PDE}s. In: Clarke, F., Stern, R. (Eds.), Nonlinear Analysis, Differential
  Equations and Control. Kluwer Academic, New York, pp. 503--549.

\bibitem[{Pardoux and Peng(1990)}]{PardouxPeng1990SCL}
Pardoux, E., Peng, S., 1990. Adapted solution of a backward stochastic
  differential equation. Syst. Control Lett. 14~(1), 55--61.

\bibitem[{Peng(1997)}]{Peng1997BSDEP}
Peng, S., 1997. Backward {SDE} and related $g$-expectation. In: El~Karoui, N.,
  Mazliak, L. (Eds.), Backward stochastic differential equations
  ({P}aris,1995--1996). Vol. 364 of Pitman Research Notes Mathematical Series.
  Longman, Harlow, pp. 141--159.

\bibitem[{Peng(1999)}]{Peng1999PTRF}
Peng, S., 1999. Monotonic limit theorem of {BSDE} and nonlinear decomposition
  theorem of {D}oob-{M}eyer's type. Probability Theory and Related Fields
  113~(4), 473--499.

\bibitem[{Peng(2004)}]{Peng2004LNM}
Peng, S., 2004. Nonlinear expectations, nonlinear evaluations and risk
  measures. In: Stochastic methods in finance. Vol. 1856 of Lecture Notes in
  Math. Springer, Berlin, pp. 165--253.

\bibitem[{Peng and Xu(2005)}]{PengXu2005AIHPPS}
Peng, S., Xu, M., 2005. The smallest {$g$}-supermartingale and reflected {BSDE}
  with single and double {$L\sp 2$} obstacles. Ann. Inst. H. Poincar\'e Probab.
  Statist. 41~(3), 605--630.

\bibitem[{Peng and Xu(2010)}]{PengXu2010Bernoulli}
Peng, S., Xu, M., 2010. Reflected {BSDE} with a constraint and its applications
  in an incomplete market. Bernoulli 16~(3), 614--640.

\bibitem[{Rosazza~Gianin(2006)}]{RosazzaEmanuela2006IME}
Rosazza~Gianin, E., 2006. Risk measures via {$g$}-expectations. Insurance Math.
  Econom. 39~(1), 19--34.

\bibitem[{Rozkosz and S{\l}omi{\'n}ski(2012)}]{RozkoszSlominski2012SPA}
Rozkosz, A., S{\l}omi{\'n}ski, L., 2012. ${L}^p$ solutions of reflected {BSDE}s
  under monotonicity condition. Stochastic Process. Appl. 122~(12), 3875--3900.

\bibitem[{Xu(2008)}]{Xu2008SPA}
Xu, M., 2008. Backward stochastic differential equations with reflection and
  weak assumptions on the coefficients. Stochastic Process. Appl. 118~(6),
  968--980.

\bibitem[{Zheng and Zhou(2008)}]{ZhengZhou2008SPL}
Zheng, S., Zhou, S., 2008. A generalized existence theorem of reflected {BSDE}s
  with double obstacles. Statist. Probab. Lett. 78~(5), 528--536.

\end{thebibliography}

\end{document}